\newcommand{\arrowdraw}[2]{\draw [postaction={decorate}] (#1)--(#2);}
\newcommand{\arrowdrawb}[2]{\draw [postaction={decorate}] (#2)--(#1);}
\newcommand{\dedgedraw}[2]{\draw [densely dotted] (#1)--(#2);}
\newcommand{\topp}[1]{\left\lceil{#1}\right\rceil}
\newcommand{\bott}[1]{\left\lfloor{#1}\right\rfloor}
\newtheorem{theorem}{Theorem}[section]
\newtheorem{lemma}[theorem]{Lemma}
\newtheorem{corollary}[theorem]{Corollary}
\newtheorem{proposition}[theorem]{Proposition}
\newtheorem{definition}[theorem]{Definition}
\newtheorem{example}[theorem]{Example}
\newcommand{\rinf}{\mathbb{R}^\infty}
\newcommand{\outball}{\overrightarrow{\mathcal{B}}}
\newcommand{\inball}{\overleftarrow{\mathcal{B}}}
\newcommand{\gr}{\mathcal{R}}
\newcommand{\gl}{\mathcal{L}}
\newcommand{\gh}{\mathcal{H}}
\newcommand{\gd}{\mathcal{D}}
\newcommand{\gj}{\mathcal{J}}
\begin{document}

\title[Hyperbolicity for directed graphs and monoids]{A strong geometric hyperbolicity property for directed graphs and monoids}

\keywords{monoid, cancellative monoid, finitely generated, hyperbolic, semimetric space}
\subjclass[2000]{20M05; 05C20}
\maketitle

\begin{center}

    ROBERT GRAY\footnote{
School of Mathematics, University of East Anglia, Norwich NR4 7TJ, UK.    
Email \texttt{Robert.D.Gray@uea.ac.uk}. Research supported by
FCT and FEDER, project POCTI-ISFL-1-143 of Centro de \'{A}lgebra da
Universidade de Lisboa, and by the project PTDC/MAT/69514/2006. He
gratefully acknowledges the support of EPSRC grant EP/F014945/1 and the
hospitality of the University of Manchester during a visit to Manchester.} \ and \ MARK KAMBITES\footnote{School of Mathematics, University of Manchester, Manchester M13 9PL, England. Email \texttt{Mark.Kambites@manchester.ac.uk}.
Research supported by an RCUK Academic Fellowship and
by EPSRC grant EP/F014945/1.} \\
\end{center}

\begin{abstract}
We introduce and study a strong ``thin triangle'' condition for directed graphs,
which
generalises the usual notion of hyperbolicity for a metric space. We prove
that finitely generated left cancellative monoids whose right Cayley graphs
satisfy this condition must be finitely presented with polynomial Dehn
functions, and hence word problems in $\mathcal{NP}$. Under the additional
assumption of right cancellativity (or in some cases the weaker condition of
bounded indegree), they also admit algorithms for
more fundamentally semigroup-theoretic decision problems such as Green's
relations $\mathcal{L}$, $\mathcal{R}$, $\gj$, $\gd$ and the corresponding
pre-orders.

In contrast, we exhibit a right cancellative (but not left cancellative)
finitely generated monoid (in fact, an infinite class of them) whose
Cayley graph is a essentially a tree (hence hyperbolic in our sense and
probably any reasonable sense), but which is not even recursively
presentable. This
seems to be strong evidence that no geometric notion of hyperbolicity will be strong
enough to yield much information about finitely generated monoids in absolute generality.
\end{abstract}

\section{Introduction}\label{sec_intro}

Over the past half century, combinatorial group theory has been increasing
dominated by ideas from geometry. One of the most successful aspects is the
theory of \textit{word hyperbolic groups}, in which a simple, combinatorial
notion of negative curvature for a group Cayley graph is used to give tight
control on the geometric, combinatorial and even computational structure of
the group \cite{Gromov87}.
The question naturally arises of whether these geometric techniques are
particular to groups, or if they apply to a wider class of monoids.
There are numerous equivalent characterisations of word hyperbolic groups,
which lead to different (non-equivalent) ways in which one could define a word
hyperbolic monoid.
 For example, a finitely presented group is word hyperbolic exactly if it has linear
\textit{Dehn function}, and this property can also be studied for monoids
 \cite{Cain12,Otto98}. A beautiful theorem of Gilman \cite{Gilman02} characterises hyperbolic groups
as those which admit \textit{context-free multiplication tables}: several
authors have studied the class of monoids satisfying this and similar
conditions
\cite{Cain12b,Cain12c,Duncan04,Hoffmann10}. Another approach is to
treat a monoid Cayley graph as an undirected graph, and require that it
be a hyperbolic \cite{Cain12b,Cassaigne09,KambitesHypCS} metric space. In general, the directional
information in a monoid or semigroup Cayley graph is of crucial importance in understanding the algebraic
structure (and most especially the \textit{ideal} structure) of the monoid,
and it is unreasonable to expect much control from any condition on
the Cayley graph which disregards the directed structure. For example, it is
easily seen that any finitely generated semigroup with a zero element will
have undirected Cayley graph of bounded diameter $2$. However, this
approach does seem to have some merit in classes of semigroups with a very
restricted ideal structure, such as \textit{completely simple semigroups}
\cite{KambitesHypCS}.

Here we propose a new way in which hyperbolicity conditions can be
extended to cancellative monoids, in a way which is geometric but does
not artificially impose a metric structure on the monoid. Specifically, we
introduce
(in Section~\ref{sec_semimetric} below) and
study a strong ``directed thin triangle'' condition for \textit{directed} graphs,
which
generalises the usual notion of hyperbolicity for a metric space.
We prove (in Section~\ref{sec_hypmonoids}) that a finitely generated left
cancellative monoid whose Cayley graph satisfies this condition
must be finitely presented with polynomial Dehn function, and hence
solvable (indeed, non-deterministic polynomial time) word problem. Under
additional right cancellativity assumptions, we also
show (Section~\ref{sec_greens}) that such a monoid admits algorithms for more
fundamentally semigroup-theoretic decision problems, including Greens'
equivalence relations $\mathcal{L}$, $\mathcal{R}$, $\gj$, and $\gd$ and
corresponding pre-orders.
These results suggest that, for at least the class of left cancellative
monoids, our thin triangle condition carries with it some genuinely
``hyperbolic''
structure. Truly geometric methods for left cancellative monoids (see for example
\cite{K_qsifp}) are of
considerable interest, for example because the word problem for one-relation
monoids (arguably the most significant open question in semigroup theory) is
reducible to the left cancellative case \cite{Adyan87}. The relationship between
one-relator presentations and hyperbolicity for groups has been studied in for
example \cite{Ivanov98}.

We have noted that previous notions of hyperbolicity for monoids appear to
be too weak, in the sense that they do not suffice
to control the behaviour of the monoid in other ways. In contrast,
our new definition yields a great deal of control on the monoid but may
perhaps be too strong, in that there are examples of ``well-behaved''
cancellative monoids which one might intuitively expect to be hyperbolic
but which do not satisfy this condition. It remains a very interesting and
important open question whether there is an intermediate notion which
encapsulates a wider class of cancellative monoids while still yielding
information comparable with that obtained for groups.

All of the above discussion applies to cancellative (and in some respects
left cancellative) monoids. In the more general case of monoids without
a cancellativity condition, we suspect there is no meaningful definition
of hyperbolicity. In
Section~\ref{sec_noncanc}, we give an elementary way to
construct finitely generated monoids (right cancellative but not
left cancellative) whose Cayley graphs
are essentially trees (and hence likely to be hyperbolic in any reasonable
geometric sense), but which need not even be recursively presentable. This
seems to be very strong evidence that no geometric hyperbolicity-type
condition on a Cayley graph will will impose much control on a general monoid.

\section{Directed Graphs and Thin Triangles}\label{sec_semimetric}

The main objects of study in this paper are directed graphs, which we allow to have loops and multiple directed edges. 
We shall view directed graphs 
chiefly as sets of vertices with an asymmetric, partially defined ``distance'' function
given by setting $d(u,v)$ to be the shortest length of a directed path from
$u$ to $v$ if there is such a path, or $\infty$ otherwise. (In particular, when viewed in this way, it is unimportant
whether the digraph has loops and/or multiple edges.) With this
convention a directed graph is a \textit{semimetric space} of the kind studied in
\cite{K_svarc,K_semimetric,K_qsifp}. 

In more detail, let $\rinf$ denote the set
$\mathbb{R}^{\geq 0} \cup \lbrace \infty \rbrace$ of
non-negative real numbers with $\infty$ adjoined. We equip it with the
obvious order, addition and multiplication, leaving $0 \infty$ undefined.
Now let $X$ be a set. A function $d : X \times X \to \rinf$ is called a
\emph{semimetric} on $X$ if:
\begin{itemize}
\item[(i)] $d(x,y)=0$ if and only if $x=y$; and
\item[(ii)] $d(x,z) \leq d(x,y) + d(y,z)$
\end{itemize}
for all $x,y,z \in X$. A set equipped with a semimetric on it is a
\textit{semimetric space}. In particular any directed graph is a semimetric space, where the distance between two vertices defined to be
the length of the shortest directed path between them, or $\infty$ if
there is no such path. 

If $X$ is a directed graph, the \textit{underlying undirected graph} of $X$ is the
graph with the same vertex set and an edge from $x$ to $y$ whenever there is an edge from
$x$ to $y$ or an edge from $y$ to $x$. 
In particular, if in the directed graph there are multiple directed edges connecting a given pair of vertices, then in the underlying undirected graph there will be a single undirected edge joining this pair of vertices. 

Now let $X$ be a directed graph (or semimetric
space), $x_0 \in X$ be a point, and $r$ be a positive real number or $\infty$.
The 
 \emph{out-ball} of radius $r$ based at $x_0$ is
\[
\overrightarrow{\mathcal{B}}_r(x_0) = \{ y \in X \mid d(x_0,y) \leq r \}.
\]
Dually, the \emph{in-ball} of radius $r$ based at $x_0$ is defined by
\[
\overleftarrow{\mathcal{B}}_r(x_0) = \{ y \in X \mid d(y,x_0) \leq r \},
\]
and the \emph{strong ball} of radius $r$ based at $x_0$ is
\[
\mathcal{B}_r(x_0) = \overrightarrow{\mathcal{B}}_r(x_0) \cap
 \overleftarrow{\mathcal{B}}_r(x_0).
\]
If $S$ is a set of points then $\overrightarrow{\mathcal{B}}_r(S)$,
$\overleftarrow{\mathcal{B}}_r(S)$ and $\mathcal{B}_r(x_0)$ are defined to be
the unions of the appropriate out-balls, in-balls and strong balls respectively
around points in $S$.

A directed graph (or semimetric space) is called \textit{$\delta$-bounded} if no
finite distance in the space exceeds $\delta$ (although there may be
points at distance $\infty$ in one or both directions). It is called
\textit{bounded} if it is $\delta$-bounded for some finite $\delta$.

For $n \geq 0$, a \textit{path} [of length
$n$ in a directed graph $X$] is a sequence $[x_0, \dots, x_n]$ of vertices
such that $X$ has
an edge from $x_i$ to $x_{i+1}$ for $0 \leq i < n$. The vertices $x_0$ and
$x_n$ are the \textit{start} and \textit{end} of the path, respectively, and
are denoted $\iota p$ and $\tau p$. The
path is called a \textit{geodesic} if $n = d(x_0, x_n)$. A path is called
\textit{simple} if it does not contain a repeated vertex. 

Now let $p = [x_0, \dots, x_n]$ and $q = [y_0, \dots, y_m]$ be paths in
$X$. We say that $p$ and $q$ are \textit{parallel} if $x_0 = y_0$ and
$x_n = y_m$, and we write $p \parallel q$. 
The paths $p$ and $q$ are 
\textit{composable} if $x_n = y_0$, in which case 
we define $p \circ q$ to be the path $[x_0, \dots, x_{n-1}, y_0, \dots, y_m]$.
A \textit{geodesic triangle} in $X$ is an ordered triple $(p, q, r)$ of
geodesics such that $p$ and $q$ are composable, and $p \circ q$ is parallel
to $r$.

\begin{definition}
Let $\delta$ be a non-negative real number.
A geodesic triangle in a directed graph is \textbf{$\delta$-thin} if
each side $x$ is contained in the union of the out-balls of radius $\delta$
around points on the side of the triangle meeting the start of $x$, together
with the in-balls of radius $\delta$ around points on the side of the triangle
meeting the end of $x$.
\end{definition}

\begin{figure}
\def\x{0.52}
\begin{center}
\begin{tikzpicture}[scale=0.55, decoration={
markings,
mark=
at position \x
with
{
\arrow[scale=1.5]{stealth}
}
}
]
\tikzstyle{vertex}=[circle,draw=black, fill=white, inner sep = 0.75mm]
\node (q)  at (1.4,4.7) {$q$};
\node (p)  at (-1.4,4.7) {$p$};
\node (B1)  at (7,8) {$\inball_\delta(q)$};
\node (B2)  at (-7,8) {$\outball_\delta(p)$};
\node (P1)  [vertex,label={180:{}}] at (-5,0) {};
\node (P2)  [vertex,label={180:{}}] at (5,0) {};
\node (P3)  [vertex,label={180:{}}] at (0,8.66) {};
\draw [postaction={decorate}] (P1) to [out=25,in=180-25] (P2);
\draw (P3) to [out=-60-25,in=120+25] (P2);
\draw (P3) to [out=240+25,in=60-25] (P1);
\draw [dashed, rotate=60] (2.5,4.33) ellipse (5.5cm and 4.3cm);
\draw [dashed, rotate=-60] (2.5-5,4.33) ellipse (5.5cm and 4.3cm);
\end{tikzpicture}
\end{center}
\caption{A schematic illustration of a $\delta$-thin directed geodesic
triangle in a directed graph: each side of the triangle is contained in the union of
the $\delta$-outball around the side that meets its initial vertex, and the
$\delta$-inball around the side that meets its terminal vertex.}\label{fig_defn}
\end{figure}
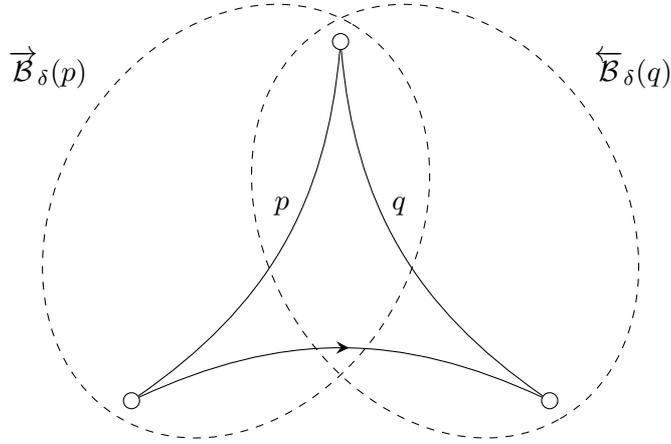

This definition is illustrated schematically in Figure~\ref{fig_defn}. Note that although
the three sides of a directed triangle play different roles, the definition
treats them in a uniform way. Made explicit, the definition says that a
directed geodesic triangle $(p,q,r)$ is $\delta$-thin if:
\begin{itemize}
\item every vertex in $r$ is contained within the union of the out-balls of
radius $\delta$ around vertices in $p$, and the in-balls of radius
$\delta$ around vertices in $q$; and
\item every vertex in $p$ is contained within the union of the out-balls of
radius $\delta$ around vertices in $r$, and the in-balls of radius
$\delta$ around vertices in $q$; and
\item every vertex in $q$ is contained within the union of the out-balls of
radius $\delta$ around vertices in $p$, and the in-balls of radius
$\delta$ around vertices in $r$.
\end{itemize}

\begin{definition}
A directed graph is called \textit{strongly $\delta$-hyperbolic} if all
of its geodesic triangles are $\delta$-thin.
\end{definition}

Given an undirected graph $Y$ we say that this graph is strongly $\delta$-hyperbolic if the digraph obtained by replacing each edge of $Y$ by a pair of oppositely oriented directed edges is strongly $\delta$-hyperbolic. With this definition an undirected graph is strongly $\delta$-hyperbolic if and only if it is $\delta$-hyperbolic in the classical sense (viewed as a metric space). 

The following propositions give some basic examples of strongly hyperbolic
directed graphs.

\begin{proposition}\label{prop_boundedspace}
Any $\delta$-bounded directed graph is strongly $\delta$-hyperbolic.
\end{proposition}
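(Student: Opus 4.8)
The plan is to show that $\delta$-boundedness renders the thin-triangle condition almost trivial. The key observation is that if $x = [x_0,\dots,x_n]$ is any geodesic (indeed any path) in a $\delta$-bounded directed graph and $x_i$ is a vertex on it, then the prefix $[x_0,\dots,x_i]$ is a directed path, so $d(x_0,x_i)$ is finite and therefore at most $\delta$; in other words, \emph{every} vertex of $x$ lies in the out-ball $\outball_\delta(x_0) = \outball_\delta(\iota x)$. It thus suffices to check that, for each side of a geodesic triangle, the start vertex of that side lies on ``the side of the triangle meeting the start of $x$''.

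First I would fix a geodesic triangle $(p,q,r)$, so that $p$ and $q$ are composable, $p \circ q$ is parallel to $r$, and hence $\iota p = \iota r$, $\tau p = \iota q$ and $\tau q = \tau r$. Reading off the incidences: the side meeting the start of $p$ is $r$ (they share the vertex $\iota p = \iota r$); the side meeting the start of $q$ is $p$ (sharing $\iota q = \tau p$); and the side meeting the start of $r$ is $p$ (sharing $\iota r = \iota p$). In each case the start vertex of the side in question is a genuine vertex of the side that meets its start.

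Now I would combine the two observations. Take any side $x \in \{p,q,r\}$ and any vertex $v$ on $x$. By the first paragraph $d(\iota x, v) \leq \delta$, so $v \in \outball_\delta(\iota x)$; and by the second paragraph $\iota x$ is a vertex of the side meeting the start of $x$. Hence $v$ lies in the union of the $\delta$-out-balls around vertices of that side, which is one of the two families of balls appearing in the definition of $\delta$-thinness, so the condition for this side is met. Since this holds for every vertex of every side, the triangle $(p,q,r)$ is $\delta$-thin; as the triangle was arbitrary, the graph is strongly $\delta$-hyperbolic.

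I do not anticipate any real obstacle here: the argument uses only the definitions, and not even the triangle inequality. The one point requiring a little care is the bookkeeping of which of the other two sides ``meets the start'' of a given side, but this is forced by the definition of a geodesic triangle. Symmetrically, one could instead place each vertex $v$ of a side $x$ in the in-ball $\inball_\delta(\tau x)$ around its end vertex, which lies on the side meeting the end of $x$; either half of the union already suffices.
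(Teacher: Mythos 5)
Your proof is correct and follows essentially the same route as the paper's: observe that $\delta$-boundedness forces every vertex of each side to lie in $\outball_\delta$ of that side's initial vertex, and that this initial vertex lies on the side of the triangle meeting the start. You have simply spelled out the incidence bookkeeping that the paper leaves implicit.
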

\begin{proof}
Since $\delta$ is an upper bound on the finite distances in the graph, it is an upper bound on the
length of geodesics in the graph. It follows that every vertex within a directed
geodesic triangle is contained with an out-ball of radius $\delta$ around its start
point, which suffices to show that every directed geodesic triangle is
$\delta$-thin.
\end{proof}

\begin{proposition}\label{prop_underlyingtree}
Let $X$ be a directed graph whose underlying undirected graph is a tree. Then $X$ is strongly
$0$-hyperbolic.
\end{proposition}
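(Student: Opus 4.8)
The plan is to exploit the uniqueness of simple paths in a tree. First I would note that every geodesic in $X$ is a \emph{simple} path: a directed path with a repeated vertex contains a strictly shorter sub-path with the same endpoints, contradicting minimality of length. Reading a simple directed path of $X$ as a sequence of vertices therefore gives a simple path in the underlying undirected graph; since that graph is a tree, this vertex sequence is uniquely determined by the first and last vertex of the path. In particular any two geodesics of $X$ with the same start and the same end visit exactly the same vertices, in the same order.

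Now let $(p,q,r)$ be a geodesic triangle, say with $\iota p = \tau r = a$, $\tau p = \iota q = b$ and $\tau q = \iota r = c$. I would invoke the standard fact that in a tree the three simple paths joining $a$, $b$ and $c$ share a common median vertex $m$ and decompose as $[a,m]\cup[m,b]$, $[b,m]\cup[m,c]$ and $[a,m]\cup[m,c]$ respectively, where the legs $[a,m]$, $[b,m]$, $[c,m]$ pairwise intersect only in $m$. Since $p$, $q$ and $r$ are simple directed paths whose vertex sequences are precisely these undirected paths, each of them passes through $m$ and splits at $m$ into two directed sub-paths tracing the relevant legs. Writing $V_{am}$, $V_{bm}$, $V_{cm}$ for the vertex sets of the three legs, we obtain that the vertex set of $p$ is $V_{am}\cup V_{bm}$, that of $q$ is $V_{bm}\cup V_{cm}$, and that of $r$ is $V_{am}\cup V_{cm}$.

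Finally I would observe that $\outball_0(v)=\inball_0(v)=\{v\}$ for every vertex $v$, so for a side $s$ both $\outball_0(s)$ and $\inball_0(s)$ equal the vertex set of $s$; hence $(p,q,r)$ is $0$-thin precisely when the vertex set of each side is contained in the union of the vertex sets of the other two sides. This is now immediate from the three equalities above: for example the vertices of $p$ form $V_{am}\cup V_{bm}\subseteq (V_{am}\cup V_{cm})\cup(V_{bm}\cup V_{cm})$, the union of the vertex sets of $r$ and $q$, and the pairing of sides is the correct one because $r$ meets the start $a$ of $p$ while $q$ meets its end $b$ (and symmetrically for the sides $q$ and $r$). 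I expect no serious obstacle here; the only steps needing a little care are the reduction of geodesics to simple paths and the passage from ``simple directed path in $X$'' to ``the unique undirected path in the tree'', after which the verification is pure bookkeeping with vertex sets. The degenerate cases, in which two of $a,b,c$ coincide and $m$ equals one of them so that a leg collapses, are covered by exactly the same argument.
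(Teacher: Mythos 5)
Your proof is correct and uses essentially the same idea as the paper: geodesics in $X$ are simple directed paths, hence trace the unique simple paths of the underlying tree, and the backtrack-cancellation phenomenon in a tree (which you phrase via the median vertex $m$) forces the vertex set of each side to lie inside the union of the vertex sets of the other two, which at $\delta=0$ is exactly the thin-triangle condition. The paper phrases the same decomposition less symmetrically, observing directly that since $p\circ q \parallel r$ and the underlying graph is a tree, $r$ must equal a prefix $p'$ of $p$ followed by a suffix $q'$ of $q$; your explicit appeal to the median $m$ and the three legs $[a,m]$, $[b,m]$, $[c,m]$ is a mildly more detailed account of the same fact. One small slip worth fixing: you set $\iota p = \tau r = a$ and $\tau q = \iota r = c$, but the paper's definition of a geodesic triangle requires $p\circ q$ to be parallel to $r$, so in fact $\iota r = \iota p = a$ and $\tau r = \tau q = c$ --- $r$ is oriented in the same direction as $p\circ q$, not the reverse. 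This does not damage your argument, precisely because at $\delta=0$ the out-ball and in-ball of a vertex both reduce to that singleton, making the thin-triangle condition insensitive to orientation, and your key observation that $0$-thinness is exactly vertex-set containment is what rescues it; but for $\delta>0$ the direction of $r$ would matter, so it is worth stating the setup correctly.
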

\begin{proof}
Let $Y$ be the undirected graph underlying $X$, and let $(p,q,r)$ be a directed geodesic triangle in $X$.
Then in particular $p$, $q$ and $r$ represent simple paths in $X$, and hence also in $Y$. Since $Y$ is
a tree and $p \circ q$ is parallel to $r$, it follows that there is a prefix $p'$ of $p$ and a suffix
$q'$ of $q$ such that $r = p' q'$. But then every vertex of $r$ lies on either $p$ or $q$. Similar
arguments show that every vertex of $p$ also lies on $q$ or $r$, and every vertex of $q$ also lies on
$p$ or $r$. This suffices to show that $X$ is strongly $0$-hyperbolic.
\end{proof}

Note however that it is \emph{not} the case in general that if the underlying
undirected graph of a digraph $X$ is hyperbolic then $X$ itself must be strongly hyperbolic. A counterexample may easily be constructed by taking a digraph $X$ that is not strongly hyperbolic, and considering the digraph $X^0$ obtained from $X$ by adding an extra vertex $z$ and an edge from
every vertex to $z$. Then by Proposition~\ref{prop_adjoinsink} below the digraph $X^0$ will not be strongly hyperbolic, while the underlying undirected graph of $X^0$ will be strongly hyperbolic, since it has bounded diameter. 

Associated with any semimetric space $X$ is a natural preorder $\lesssim$ relation given by $x \lesssim y$ if and only if $d(y,x) < \infty$. Let $\sim$ denote the equivalence relation given by $x \sim y$ if and only if
$x \lesssim y$ and $y \lesssim x$. We call the $\sim$-classes the \emph{strongly connected components} of $X$. 

\begin{proposition}\label{prop_component}
If $X$ is a strongly $\delta$-hyperbolic directed graph, then every strongly
connected component of $X$ is strongly $\delta$-hyperbolic.
\end{proposition}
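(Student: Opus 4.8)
The plan is to realise a strongly connected component $C$ as the full subdigraph of $X$ induced on a single $\sim$-class, and to show that $C$ inherits both its distance function and its thin-triangle property directly from $X$. The one substantive point is the following structural observation: if $u$ and $v$ lie in the same strongly connected component $C$, then every directed path in $X$ from $u$ to $v$ passes only through vertices of $C$. To see this, suppose $u = v_0, v_1, \dots, v_n = v$ is such a path. The prefix from $u$ to $v_i$ witnesses $d(u, v_i) < \infty$, i.e. $v_i \lesssim u$, while the suffix from $v_i$ to $v$ witnesses $d(v_i, v) < \infty$, i.e. $v \lesssim v_i$; combining these with $u \lesssim v$ (which holds since $u \sim v$) and transitivity of $\lesssim$ gives $v_i \sim u$, so $v_i \in C$.

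From this it follows at once that the distance function of the subdigraph $C$ coincides with the restriction to $C \times C$ of the distance function of $X$: any directed path between two vertices of $C$ already lies wholly in $C$, so a shortest such path in $X$ is also a shortest such path in $C$, and conversely. Consequently a path in $C$ is a geodesic in $C$ if and only if it is a geodesic in $X$, and for a vertex $w$ of $C$ the balls $\outball_\delta(w)$ and $\inball_\delta(w)$ computed inside $C$ are exactly the intersections with $C$ of the corresponding balls computed in $X$.

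To finish, let $(p, q, r)$ be a geodesic triangle in $C$. By the previous paragraph it is also a geodesic triangle in $X$, hence $\delta$-thin in $X$ by hypothesis. Fix a side $x$ of the triangle and a vertex $v$ on it; then $v$ lies in $\outball_\delta(w)$ for some vertex $w$ on the side meeting the start of $x$, or in $\inball_\delta(w)$ for some vertex $w$ on the side meeting the end of $x$, these balls being taken in $X$. Since $v$ and $w$ both lie in $C$, the relevant $X$-distance between them, being at most $\delta$, is realised by a path inside $C$, so $v$ lies in the corresponding $\delta$-ball computed in $C$. Thus $(p, q, r)$ is $\delta$-thin in $C$, and $C$ is strongly $\delta$-hyperbolic. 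I do not expect any serious obstacle here: essentially everything reduces to the first structural observation, after which the rest is a routine unwinding of the definitions.
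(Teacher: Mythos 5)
Your proof follows essentially the same line as the paper's: observe that directed paths between vertices of a strongly connected component stay inside that component, deduce that distances (hence geodesics and $\delta$-balls) agree, and transfer the thin-triangle property from $X$ to the component. You spell out the path-containment observation in more detail than the paper, but the argument is the same.
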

\begin{proof}
Let $Y$ be a strongly connected component of a strongly $\delta$-hyperbolic
graph $X$. First note that since no directed paths between vertices of $Y$
pass outside $Y$, distances between vertices in $Y$ are the same in $Y$ as
in $X$. It follows that every directed geodesic triangle in $Y$ is also a
directed geodesic triangle in $X$, and hence is $\delta$-thin in $X$. Since
distances are the same in $X$ as in $Y$, it follows that every directed
geodesic triangle in $Y$ is $\delta$-thin in $Y$. Thus, $Y$ is strongly
$\delta$-hyperbolic.
\end{proof}

\begin{proposition}\label{prop_adjoinsink}
Let $X$ be a directed graph, and let $X^0$ be the directed graph obtained from $X$ by adding an extra
vertex $z$ and an edge from every vertex of $X^0$ to the vertex $z$. If $X$ is strongly $\delta$-hyperbolic then
$X^0$ is strongly
$\max(1, \delta)$-hyperbolic. Conversely, if $X^0$ is strongly $\delta$-hyperbolic
then $X$ is strongly $\delta$-hyperbolic.
\end{proposition}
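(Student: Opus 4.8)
The plan rests on understanding how distances and geodesics in $X^0$ relate to those in $X$. Since the new vertex $z$ has no outgoing edges other than a loop, and loops play no role in geodesics, any directed path reaching $z$ can make no further progress. Hence: (a) for vertices $u,v$ of $X$ the value $d(u,v)$ is unchanged on passing from $X$ to $X^0$, and the geodesics of $X^0$ between such vertices are exactly those of $X$; (b) every vertex $v \in X$ satisfies $d(v,z)=1$, so the only geodesics of $X^0$ meeting $z$ are the trivial path $[z]$ and the single edges $[v,z]$ ($v \in X$), each meeting $z$ only at its terminal vertex; and (c) consequently $\inball_1(z)=X^0$.

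For the forward direction, let $(p,q,r)$ be a geodesic triangle in $X^0$ and put $\delta'=\max(1,\delta)$; from $p\circ q\parallel r$ we have $\iota p=\iota r$, $\tau p=\iota q$ and $\tau q=\tau r$. If $z$ lies on no side, then by (a) the triangle is a geodesic triangle in $X$, hence $\delta$-thin by hypothesis, hence $\delta'$-thin. Otherwise, since by (b) a geodesic meets $z$ only at its endpoint, a short case analysis on where $z$ can occur forces, apart from a few degenerate configurations that are visibly $0$-thin, the situation $\tau q=\tau r=z$ with $q=[\tau p,z]$ and $r=[\iota p,z]$ single edges and $p$ a geodesic lying in $X$. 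The sides $q$ and $r$ are then $0$-thin, since each of their two vertices already lies on another side; and the side $p$ is $\delta'$-thin because $z$ is a vertex of $q$ and $\inball_{\delta'}(z)\supseteq\inball_1(z)=X^0$ contains every vertex of $p$. This last observation is exactly why $\delta$ must be inflated to $\max(1,\delta)$: the side $p$ can be arbitrarily long, and it is covered only through the in-ball around $z$.

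For the converse, suppose $X^0$ is strongly $\delta$-hyperbolic and let $(p,q,r)$ be a geodesic triangle in $X$. By (a) the distances among vertices of $X$ are the same in $X^0$, so $(p,q,r)$ is also a geodesic triangle in $X^0$ and is therefore $\delta$-thin there; but the definition of $\delta$-thinness of $(p,q,r)$ refers only to out- and in-balls of radius $\delta$ about vertices of its three sides, all of which lie in $X$, and the relevant distances coincide in $X$ and in $X^0$ by (a). Hence $(p,q,r)$ is $\delta$-thin in $X$, and $X$ is strongly $\delta$-hyperbolic. The only genuinely fiddly point in the argument is the case analysis in the forward direction — checking that $z$ can occur neither in the interior nor at the start of a geodesic side, and disposing of the degenerate triangles — the rest being a handful of distance computations.
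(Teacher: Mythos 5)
Your proof is correct and takes essentially the same approach as the paper's: if the triangle misses $z$ it lies in $X$ and inherits $\delta$-thinness, while otherwise $z$ is forced to be the common endpoint $\tau q=\tau r$, the sides $q$ and $r$ collapse to length at most $1$, and the remaining side $p$ is swept up entirely by $\inball_1(z)$. Your direct argument for the converse (distances among $X$-vertices coincide in $X$ and $X^0$, so geodesic triangles and their thinness transfer) is in fact slightly more careful than the paper's bare appeal to Proposition~\ref{prop_component}, which literally requires $X$ to be a single strongly connected component of $X^0$ --- not true in general --- although the reasoning invoked is the same.
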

\begin{proof}
Suppose $X$ is strongly $\delta$-hyperbolic, and let $(p,q,r)$ be a directed geodesic triangle in $X^0$. If
$(p,q,r)$ lies entirely in $X$ then it is $\delta$-thin in $X$, and hence in $X^0$. Otherwise, it contains
the vertex $z$, and since there are no non-loop edges out of $z$, it follows that the common end-point of $q$ and
$r$ is $z$. But since $q$ and $r$ are geodesics and there are edges from every vertex of $X$ to the vertex $z$, it follows
that $q$ and $r$ are sides of length $1$, and the only vertices on them are the vertices of the triangle. Thus,
every vertex on $q$ is either on $p$ or $r$, and every point on $r$ is either on $p$ or $q$. Finally,
every point on $p$ is contained in an in-ball of radius $1$ about $z$, which lies on $q$. Thus, the triangle
is $1$-thin. Hence, $X^0$ is strongly $\phi$-hyperbolic where $\phi = \max(1, \delta)$.

The converse follows from Proposition~\ref{prop_component}, since $X$
is a strongly connected component in $X^0$.
\end{proof}

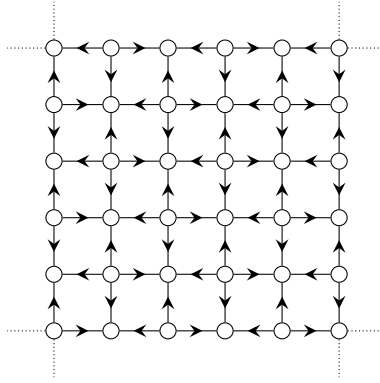
\begin{figure}
\def\x{0.65}
\begin{center}
\begin{tikzpicture}[scale=0.75, decoration={ 
markings,
mark=
at position \x
with 
{ 
\arrow[scale=1.5]{stealth} 
} 
} 
]
\tikzstyle{vertex}=[circle,draw=black, fill=white, inner sep = 0.75mm]
\node (Q1)  [label={180:{}}] at (0,-1) {};
\node (Q2)  [label={180:{}}] at (-1,0) {};
\node (Q3)  [label={180:{}}] at (0,6) {};
\node (Q4)  [label={180:{}}] at (-1,5) {};
\node (Q5)  [label={180:{}}] at (5,6) {};
\node (Q6)  [label={180:{}}] at (6,5) {};
\node (Q7)  [label={180:{}}] at (6,0) {};
\node (Q8)  [label={180:{}}] at (5,-1) {};
\node (P00)  [vertex,label={180:{}}] at (0,0) {};
\node (P10)  [vertex,label={180:{}}] at (1,0) {};
\node (P20)  [vertex,label={180:{}}] at (2,0) {};
\node (P30)  [vertex,label={180:{}}] at (3,0) {};
\node (P40)  [vertex,label={180:{}}] at (4,0) {};
\node (P50)  [vertex,label={180:{}}] at (5,0) {};
\node (P01)  [vertex,label={180:{}}] at (0,1) {};
\node (P11)  [vertex,label={180:{}}] at (1,1) {};
\node (P21)  [vertex,label={180:{}}] at (2,1) {};
\node (P31)  [vertex,label={180:{}}] at (3,1) {};
\node (P41)  [vertex,label={180:{}}] at (4,1) {};
\node (P51)  [vertex,label={180:{}}] at (5,1) {};
\node (P02)  [vertex,label={180:{}}] at (0,2) {};
\node (P12)  [vertex,label={180:{}}] at (1,2) {};
\node (P22)  [vertex,label={180:{}}] at (2,2) {};
\node (P32)  [vertex,label={180:{}}] at (3,2) {};
\node (P42)  [vertex,label={180:{}}] at (4,2) {};
\node (P52)  [vertex,label={180:{}}] at (5,2) {};
\node (P03)  [vertex,label={180:{}}] at (0,3) {};
\node (P13)  [vertex,label={180:{}}] at (1,3) {};
\node (P23)  [vertex,label={180:{}}] at (2,3) {};
\node (P33)  [vertex,label={180:{}}] at (3,3) {};
\node (P43)  [vertex,label={180:{}}] at (4,3) {};
\node (P53)  [vertex,label={180:{}}] at (5,3) {};
\node (P04)  [vertex,label={180:{}}] at (0,4) {};
\node (P14)  [vertex,label={180:{}}] at (1,4) {};
\node (P24)  [vertex,label={180:{}}] at (2,4) {};
\node (P34)  [vertex,label={180:{}}] at (3,4) {};
\node (P44)  [vertex,label={180:{}}] at (4,4) {};
\node (P54)  [vertex,label={180:{}}] at (5,4) {};
\node (P05)  [vertex,label={180:{}}] at (0,5) {};
\node (P15)  [vertex,label={180:{}}] at (1,5) {};
\node (P25)  [vertex,label={180:{}}] at (2,5) {};
\node (P35)  [vertex,label={180:{}}] at (3,5) {};
\node (P45)  [vertex,label={180:{}}] at (4,5) {};
\node (P55)  [vertex,label={180:{}}] at (5,5) {};
\arrowdraw{P00}{P01};
\arrowdraw{P02}{P03};
\arrowdraw{P04}{P05};
\arrowdraw{P00}{P10};
\arrowdraw{P20}{P30};
\arrowdraw{P40}{P50};
\arrowdrawb{P10}{P11};
\arrowdrawb{P12}{P13};
\arrowdrawb{P14}{P15};
\arrowdrawb{P01}{P11};
\arrowdrawb{P21}{P31};
\arrowdrawb{P41}{P51};
\arrowdraw{P20}{P21};
\arrowdraw{P22}{P23};
\arrowdraw{P24}{P25};
\arrowdraw{P02}{P12};
\arrowdraw{P22}{P32};
\arrowdraw{P42}{P52};
\arrowdrawb{P30}{P31};
\arrowdrawb{P32}{P33};
\arrowdrawb{P34}{P35};
\arrowdrawb{P03}{P13};
\arrowdrawb{P23}{P33};
\arrowdrawb{P43}{P53};
\arrowdraw{P40}{P41};
\arrowdraw{P42}{P43};
\arrowdraw{P44}{P45};
\arrowdraw{P04}{P14};
\arrowdraw{P24}{P34};
\arrowdraw{P44}{P54};
\arrowdrawb{P50}{P51};
\arrowdrawb{P52}{P53};
\arrowdrawb{P54}{P55};
\arrowdrawb{P05}{P15};
\arrowdrawb{P25}{P35};
\arrowdrawb{P45}{P55};
\arrowdraw{P11}{P12};
\arrowdraw{P13}{P14};
\arrowdraw{P11}{P21};
\arrowdraw{P31}{P41};
\arrowdrawb{P01}{P02};
\arrowdrawb{P03}{P04};
\arrowdrawb{P10}{P20};
\arrowdrawb{P30}{P40};
\arrowdraw{P31}{P32};
\arrowdraw{P33}{P34};
\arrowdraw{P13}{P23};
\arrowdraw{P33}{P43};
\arrowdrawb{P21}{P22};
\arrowdrawb{P23}{P24};
\arrowdrawb{P12}{P22};
\arrowdrawb{P32}{P42};
\arrowdraw{P51}{P52};
\arrowdraw{P53}{P54};
\arrowdraw{P15}{P25};
\arrowdraw{P35}{P45};
\arrowdrawb{P41}{P42};
\arrowdrawb{P43}{P44};
\arrowdrawb{P14}{P24};
\arrowdrawb{P34}{P44};
\dedgedraw{P00}{Q1}
\dedgedraw{P00}{Q2}
\dedgedraw{P05}{Q3}
\dedgedraw{P05}{Q4}
\dedgedraw{P55}{Q5}
\dedgedraw{P55}{Q6}
\dedgedraw{P50}{Q7}
\dedgedraw{P50}{Q8}
\end{tikzpicture}
\end{center}
\caption{An infinite directed grid, giving an example of a directed graph
that is strongly $\delta$-hyperbolic (since there are no directed geodesic
triangles)
but such that the underlying undirected graph is not $\delta$-hyperbolic.}
\end{figure}

\section{Triangle and Polygon Inequalities}

One of the difficulties of working with semimetric spaces is the limited
nature of the triangle inequality. If $(p,q,r)$ is a geodesic triangle then
certainly $|r| \leq |p| + |q|$, but we do not automatically have an upper
bound on $|p|$ in terms of $|q|$ and $|r|$, or on $|q|$ in terms of $|p|$
and $|r|$. In many cases of interest, however, hyperbolicity allows us to
acquire such bounds, albeit rather weaker than the conventional triangle
inequality.

Recall that a semimetric space is called \textit{quasi-metric} if there
is a constant $\lambda$ such that $d(x,y) \leq \lambda d(y,x) + \lambda$
for all points $x$ and $y$. Equivalently, a space is quasi-metric if it
is quasi-isometric to a metric space \cite{K_semimetric}.

\begin{lemma}\label{lemma_quasitriangle}
Let $X$ be a locally finite, strongly $\delta$-hyperbolic directed graph with indegree
and outdegree bounded by $\alpha$. Then there is a constant $\lambda$,
depending only on (and polynomial-time computable from) $\delta$ and $\alpha$, such
that whenever $p$, $q$, and $r$ are the sides of a directed geodesic
triangle in $X$ (in no particular order), we have
\[
|p| \le \lambda (|q| + |r|).
\]
\end{lemma}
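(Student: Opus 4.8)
The plan is to combine two observations. The first is that the bound $\alpha$ on in- and outdegrees forces the out-ball $\outball_\delta(v)$ and the in-ball $\inball_\delta(v)$ around any vertex $v$ to contain only boundedly many vertices. The second is that $\delta$-thinness covers the vertex set of each side of a directed geodesic triangle by balls of radius $\delta$ centred on the other two sides. Counting vertices then immediately yields the desired linear bound.

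First I would make the ball bound precise. Since the lengths of directed paths are integers we have $\outball_\delta(v) = \outball_{\lfloor \delta \rfloor}(v)$, and any vertex at directed distance at most $m := \lfloor \delta \rfloor$ from $v$ is the endpoint of a path of length at most $m$ starting at $v$; as at most $\alpha$ vertices can be reached from any given vertex along a single edge, there are at most $N := 1 + \alpha + \alpha^2 + \cdots + \alpha^{m}$ such vertices. Hence $|\outball_\delta(v)| \le N$ for every $v \in X$, and dually $|\inball_\delta(v)| \le N$ using the bound on indegrees. Note that $N \ge 1$ and that $N$ is an explicit (indeed polynomial-time computable) function of $\delta$ and $\alpha$.

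Now let $(p,q,r)$ be the three sides of a directed geodesic triangle, and consider bounding the length of whichever of them plays the role of $p$ in the statement; call this the designated side and call the other two $s$ and $t$. Because $\delta$-thinness treats the three sides of the triangle uniformly, in each of the three possible choices the thinness condition tells us that every vertex on the designated side lies in $\outball_\delta(w)$ for some vertex $w$ on one of $s,t$, or in $\inball_\delta(w)$ for some vertex $w$ on the other. Since a geodesic is a simple path, the designated side has exactly its length plus one distinct vertices, and its vertex set is contained in $\outball_\delta(V_s) \cup \inball_\delta(V_t)$, where $V_s, V_t$ are the vertex sets of $s, t$. As $s$ has $|s|+1$ vertices and $t$ has $|t|+1$ vertices, the ball bound gives
\[
|p| + 1 \;\le\; |\outball_\delta(V_s)| + |\inball_\delta(V_t)| \;\le\; (|s| + 1)N + (|t| + 1)N \;=\; N\bigl( |q| + |r| + 2 \bigr),
\]
since $\{s,t\} = \{q,r\}$. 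If $|q| + |r| \ge 1$ then $|q| + |r| + 2 \le 3(|q| + |r|)$, whence $|p| \le 3N(|q| + |r|)$, so that $\lambda = 3N$ suffices. If instead $|q| + |r| = 0$, then $q$ and $r$ are trivial paths and tracing the endpoint identifications of the triangle shows that the designated side is a geodesic from a vertex to itself, hence also of length $0$, so the inequality holds trivially.

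I do not expect a serious obstacle: the whole content lies in the ball-size estimate and in the observation that thinness lets one cover each side using only the other two. The points needing a little care are the bookkeeping in the displayed counting step — in particular using that geodesics are simple, so the number of vertices on a side equals its length plus one — together with checking the symmetric treatment of the three roles and disposing of the degenerate case.
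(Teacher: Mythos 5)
Your proof is correct and uses essentially the same argument as the paper: cover the (simple) designated side by the $\delta$-out-balls and $\delta$-in-balls around the other two sides as supplied by thinness, bound the number of vertices in those balls using the degree bound, and count. The only cosmetic difference is that you treat all three sides uniformly via the ball count, whereas the paper dispatches the hypotenuse case separately by the ordinary one-sided triangle inequality; both handle the degenerate $|q|+|r|=0$ case in the same way.
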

\begin{proof}
If $p$ is the hypotenuse the result follows from the standard triangle inequality for semimetric spaces. 
Now suppose that $r$ is the hypotenuse and that $\tau p = \iota q$ (the third remaining case being dual
to this one). 
If $|r|=|q|=0$ then, since $p$ is geodesic, it follows that $|p|=0$ and the result holds. So we may now suppose that either $|r|\geq 1$ or $|q| \geq 1$. Since $p$ is geodesic it has no repeated vertices. Also since $X$ is strongly $\delta$-hyperbolic it follows that the
vertices of $p$ are all contained in the set
$
\outball_{\delta}(r) \cup \inball_{\delta}(q).
$
We may assume that the graph $X$ has at least one directed edge, since otherwise the result is trivially true, and so in particular $\alpha \geq 1$.  
As the outdegree is bounded by $\alpha$ it follows that
the outdegree is also bounded by $\alpha+1$ and so
\[
| \outball_{\delta}(r) | \le 
(| r | + 1)
\sum_{i=0}^{\delta}((\alpha+1)^i)
=
(| r | + 1)
\left( \frac{(\alpha+1)^{\delta+1} - 1}{(\alpha+1)-1} \right).
\]
(We take $\alpha+1$ rather than $\alpha$ here just to avoid having to deal with the case $\alpha=1$ seperately.)
Also, the indegree is bounded by $\alpha$, so one obtains a similar bound for $| \inball_{\delta}(q) |$, which combined with the above inequality gives
\[
|p| \le 
(| r | + 1 + |q| + 1)
\left( \frac{(\alpha+1)^{\delta+1} - 1}{(\alpha+1)-1} \right)
\le
\left(
\frac{3((\alpha+1)^{\delta+1} - 1)}{(\alpha+1)-1} 
\right) (|q| + |r|),
\]
completing the proof. 
\end{proof}

\begin{corollary}
\label{cor_poly}
Let $X$ be a locally finite strongly $\delta$-hyperbolic directed graph with indegree
and outdegree bounded by $\alpha$. Then there is a constant $\lambda$,
depending only on (and polynomial-time computable from) $\delta$ and
$\alpha$, such that every strongly connected component of $X$ is $(\lambda,0)$-quasi-metric.
\end{corollary}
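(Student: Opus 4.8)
The plan is to derive this immediately from Lemma~\ref{lemma_quasitriangle} applied to a degenerate geodesic triangle. Fix a strongly connected component $Y$ of $X$. By Proposition~\ref{prop_component}, $Y$ is again strongly $\delta$-hyperbolic, and being a sub-digraph of $X$ it is locally finite with indegree and outdegree still bounded by $\alpha$; moreover, as observed in the proof of Proposition~\ref{prop_component}, distances within $Y$ coincide with distances in $X$. Hence Lemma~\ref{lemma_quasitriangle} applies to $Y$ and supplies a constant $\lambda$ depending only on (and polynomial-time computable from) $\delta$ and $\alpha$ --- indeed the same $\lambda$ works for every component at once, since it depends on nothing else.

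Now let $x, y \in Y$. Using strong connectivity, choose a geodesic $p$ from $x$ to $y$ and a geodesic $q$ from $y$ to $x$, so $|p| = d(x,y)$ and $|q| = d(y,x)$ are both finite, and let $r = [x]$ be the trivial length-zero path at $x$, which is a geodesic because $d(x,x) = 0$. Then $p$ and $q$ are composable (their common vertex being $y$), and $p \circ q$ runs from $x$ back to $x$ and is therefore parallel to $r$; so $(p, q, r)$ is a directed geodesic triangle in $Y$. Taking $p$ to be the side whose length we bound, Lemma~\ref{lemma_quasitriangle} gives
\[
d(x,y) = |p| \le \lambda\bigl(|q| + |r|\bigr) = \lambda\, d(y,x).
\]
Since $x$ and $y$ were arbitrary, $Y$ is $(\lambda, 0)$-quasi-metric, which is what we wanted.

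The argument is essentially a single substitution, so I do not expect a genuine obstacle; the only things worth double-checking are that a one-vertex path legitimately counts as a geodesic (so that $(p,q,[x])$ meets the definition of a geodesic triangle) and that the hypotheses of Lemma~\ref{lemma_quasitriangle} --- local finiteness, the degree bound, and strong $\delta$-hyperbolicity --- really do pass to the strongly connected component $Y$, the last of these being exactly Proposition~\ref{prop_component}.
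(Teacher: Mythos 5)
Your proof is correct and follows essentially the same route as the paper: apply Lemma~\ref{lemma_quasitriangle} to the degenerate geodesic triangle formed by a geodesic $p$ from $x$ to $y$, a geodesic $q$ from $y$ to $x$, and the empty path at $x$. The only cosmetic difference is that you first pass to the component $Y$ via Proposition~\ref{prop_component} before invoking the lemma, whereas the paper applies the lemma directly in $X$ (which is already legitimate since $p$, $q$, and the empty path are geodesics in $X$); this detour is harmless but unnecessary.
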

\begin{proof}
Let $\lambda$ be the constant given by Lemma~\ref{lemma_quasitriangle}.
Now suppose $x$ and $y$ belong to the same strongly connected component
of $X$. Let $p$ be a geodesic from $x$ to $y$, $q$ a geodesic from
$y$ to $x$, and $e$ the (geodesic) empty path at $x$. Then the triple
$(x,y,e)$ is a geodesic triangle, and so Lemma~\ref{lemma_quasitriangle}
yields
\[
d(x,y) = |p| \leq \lambda (|q| + |e|) = \lambda (d(y,x) + 0) = \lambda d(y,x).
\] 
\end{proof}

\begin{definition}[Directed geodesic $n$-gon]
A directed geodesic $n$-gon in a directed graph $\Gamma$ is an $n$-tuple $(p_1,\ldots,p_{n-1},q)$ of geodesic paths such that $p = p_1 \circ p_2 \circ \cdots \circ p_{n-1}$ is defined, and $p \parallel q$.  
\end{definition}

\begin{theorem}[Polygon quasi-inequality]
\label{thm_poly}
\begin{sloppypar}
Let $X$ be a locally finite strongly $\delta$-hyperbolic directed graph with
indegree and outdegree bounded by $\alpha$. Then there is a constant
$K$, depending only on (and polynomial-time computable from) $\alpha$
and $\delta$, such that every side
length of a directed geodesic $n$-gon is bounded above by $K$ times the sum
of the other side lengths.
\end{sloppypar} 
\end{theorem}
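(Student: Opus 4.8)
The plan is to derive the polygon quasi-inequality directly from the triangle quasi-inequality (Lemma~\ref{lemma_quasitriangle}) by using exactly \emph{two} carefully chosen geodesic triangles, rather than triangulating the $n$-gon and iterating the triangle bound $n-1$ times; the latter approach produces a constant growing exponentially in $n$, which is precisely what the theorem forbids.

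First I would set up notation: fix a directed geodesic $n$-gon $(p_1,\dots,p_{n-1},q)$ and write $v_0 = \iota p_1$ and $v_j = \tau p_j$ for $1 \le j \le n-1$, so that $v_0 \to v_1 \to \dots \to v_{n-1}$ is a directed path and $q$ is a geodesic from $v_0$ to $v_{n-1}$. If the distinguished side is $q$ itself, then $|q| = d(v_0,v_{n-1}) \le \sum_{k=1}^{n-1}|p_k|$ by the semimetric triangle inequality, and the bound holds with constant $1$. So assume the distinguished side is some $p_i$ with $1 \le i \le n-1$. Since $v_0$ reaches each $v_j$ and each $v_j$ reaches $v_{n-1}$ along the path, I fix a geodesic $r$ from $v_0$ to $v_{i-1}$, a geodesic $r'$ from $v_0$ to $v_i$, and a geodesic $s$ from $v_i$ to $v_{n-1}$ (here $r$ is the empty path when $i=1$, and $s$ is the empty path when $i=n-1$).

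The two triangles to use are $(r, p_i, r')$, which is a geodesic triangle because $r \circ p_i$ is parallel to $r'$, and $(r', s, q)$, which is a geodesic triangle because $r' \circ s$ is parallel to $q$. Applying Lemma~\ref{lemma_quasitriangle} to the first, with $p_i$ in the role of the bounded side, gives $|p_i| \le \lambda(|r| + |r'|)$, and applying it to the second, with $r'$ in the role of the bounded side, gives $|r'| \le \lambda(|s| + |q|)$, where $\lambda = \lambda(\alpha,\delta)$ is the constant of that lemma. Now $r$ is a geodesic from $v_0$ to $v_{i-1}$, so $|r| \le |p_1| + \dots + |p_{i-1}|$, and $s$ is a geodesic from $v_i$ to $v_{n-1}$, so $|s| \le |p_{i+1}| + \dots + |p_{n-1}|$; the essential point is that both partial sums omit the term $|p_i|$, so there is no circularity. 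Chaining the inequalities, $|p_i| \le \lambda|r| + \lambda^2|s| + \lambda^2|q| \le (\lambda^2 + \lambda)\big(|q| + \sum_{k \ne i}|p_k|\big)$, so $K = \lambda^2 + \lambda$ works; it depends only on $\alpha$ and $\delta$ and is polynomial-time computable from them because $\lambda$ is.

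I expect essentially no obstacle beyond the one conceptual step already described: one must resist the natural urge to triangulate the $n$-gon and instead \emph{anchor} both auxiliary triangles at the two endpoints $v_0$ and $v_{n-1}$ of the side $q$, which is exactly what decouples the constant from $n$. Everything else is routine: verifying that $(r,p_i,r')$ and $(r',s,q)$ are genuine geodesic triangles in the sense of the definition, and checking that the degenerate cases $i=1$ (where $r$ and the left partial sum are empty) and $i=n-1$ (where $s$ is empty and $|r'|=|q|$), as well as the case where $q$ has length $0$, are all covered by the same inequalities with no separate argument needed.
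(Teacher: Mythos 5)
Your proof is correct, but it takes a genuinely different and in fact shorter route than the paper. The paper proceeds by induction on $n$: it repeatedly collapses two adjacent sides of the polygon (avoiding the distinguished side $p_i$) into a single geodesic $r$, using the \emph{ordinary} triangle inequality $|r|\le|p_1|+|p_2|$ for the collapsed geodesic so that the constant does not accumulate a factor of $\lambda$ at each step; the quasi-inequality of Lemma~\ref{lemma_quasitriangle} is invoked only at the bottom of the recursion, and a separate special case is needed for $n=4$, $i=2$, where no two composable sides avoid $p_i$. This yields $K=\max(\lambda^2,1)$. Your argument dispenses with the induction entirely by anchoring two geodesic triangles $(r,p_i,r')$ and $(r',s,q)$ at the endpoints $v_0$ and $v_{n-1}$, applying the quasi-inequality exactly twice, and absorbing the remaining sides via the ordinary (forward) triangle inequality applied to $r$ and $s$; this gives $K=\lambda^2+\lambda$ with no case analysis beyond checking that the degenerate instances $i=1$ and $i=n-1$ involve empty-path sides, which are legitimate geodesics (the paper itself uses an empty side in the triangle of Corollary~\ref{cor_poly}). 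Both proofs exploit the same underlying asymmetry --- geodesics running ``with the grain'' of the polygon obey the usual triangle inequality, so the $\lambda$-losses can be kept to a bounded number --- but your version isolates that idea more cleanly and avoids both the induction and the $n=4$ special case.
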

\begin{proof}
Let $\lambda$ be the constant given by Lemma~\ref{lemma_quasitriangle}
and let $K = \max(\lambda^2,1)$.

The case $n=3$ is immediate from Lemma~\ref{lemma_quasitriangle}.
Let $(p_{1}, \ldots, p_{n-1}, q)$ be a directed geodesic $n$-gon. Since
$q$ is geodesic it is
immediate from the triangle inequality that
$|q| \leq |p_1| + \dots + |p_n| \leq K (|p_1| + \dots + |p_n|)$.
It remains only to prove that the sides $p_i$ satisfy the claimed
bound when $n \geq 4$.

We treat first the case where $n=4$ and $i = 2$. (This case is special
because our general strategy will involve choosing two composable sides of
the polygon excluding $p_i$, and this is the only case
where this approach is impossible.)
In this case, let $r$ be a
geodesic from $\iota p_1$ to $\tau p_2$. Then $(r, p_3, q)$ and
$(p_1, p_2, r)$ are geodesic triangles, so applying Lemma~\ref{lemma_quasitriangle}
twice we have
$|r| \leq \lambda (|p_3| + |q|)$ and
$$|p_2| \leq \lambda (|p_1| + |r|) \leq \lambda (|p_1| + \lambda (|p_3| + |q|)) \leq K (|p_1| + |p_3| + |q|)$$
as required.

We establish the result in the remaining cases by induction on $n$.
Consider, then, a geodesic $n$-gon $(p_1,\ldots, p_{n-1}, q)$ where
either $n > 4$, or $n = 4$ but $i \neq 2$, and suppose the claim holds
for all geodesic $(n-1)$-gons.

Suppose first that $i > 2$. Let $r$ be a geodesic from $\iota p_{1}$ to
$\tau p_{2}$. Then by the
triangle inequality $|r| \leq |p_1| + |p_2|$. Now
$(r, p_2, \ldots, p_{n-1}, q)$ is a geodesic $(n-1)$-gon, so
by the inductive hypothesis we have
\begin{eqnarray*}
|p_{i}|
& \le & K \left( | r | + | p_{3} | + \cdots + | p_{i-1} | + | p_{i+1} | + \cdots + | p_{n-1} | + | q | \right) \\
& \le & K \left( (|p_{1}| + |p_{2}|) + | p_{3} | + \cdots + | p_{i-1} | + | p_{i+1} | + \cdots + | p_{n-1} | + | q | \right).
\end{eqnarray*}

Finally suppose $i \leq 2$. By assumption, either $n > 4$, or else $n = 4$
but $i \neq 2$. It follows that we have $i \neq p_{n-1}$ and $i \neq p_{n-2}$.
Now we can apply the same argument as in the previous case, but taking $r$
this time to be a geodesic from $\iota p_{n-1}$ to $\tau p_n$.
\end{proof}

We note that Lemma~\ref{lemma_quasitriangle} and Theorem~\ref{thm_poly} can
both fail if the hypothesis of local finiteness is dropped. Indeed, let $X$
be the directed graph with vertex set $\mathbb{Z}$ and an edge from $i$ to
$i+1$ for each $i \in \mathbb{Z}$, and let $Y = X^0$ be the graph obtained
from $X$ by the construction in Proposition~\ref{prop_adjoinsink}. Then
$Y$ is strongly $1$-hyperbolic by Propositions~\ref{prop_underlyingtree}
and \ref{prop_adjoinsink}, but contains
directed geodesic triangles with two sides
of length $1$ and the third arbitrarily long.

\section{Monoids and Cayley Graphs}\label{sec_hypmonoids}

Let $M$ be a monoid generated by a finite set $S$. Then $M$ naturally
has the structure of a directed graph (with vertex set $M$, and an edge
from $m$ to $n$ for each $s \in S$ such that $ms = n$), and hence also
of a semimetric space. This directed graph is called the \textit{right Cayley graph}
of $M$.
\begin{definition}
A monoid $M$ generated by a finite subset $S$ is called \textit{strongly
$\delta$-hyperbolic} (with respect to the generating set $S$) if its right
Cayley graph with respect to $S$ is strongly $\delta$-hyperbolic.
A monoid $M$ is called \textit{strongly hyperbolic} if it is strongly $\delta$-hyperbolic
for some $\delta$ with respect to some finite generating set.
\end{definition}

We do not presently know whether a strongly hyperbolic
monoid is necessarily strongly hyperbolic with respect to every choice
of finite generating set. This question is deserving of further study.

Our initial results about strongly $\delta$-hyperbolic spaces immediately
give a number of examples of monoids which are strongly hyperbolic.

\begin{example}
Every finite monoid is strongly $\delta$-hyperbolic with respect to every generating set, where
$\delta$ is the length of the longest geodesic representative for an element of $M$. Indeed,
it is easy to see that the Cayley graph is $\delta$-bounded, so this follows from
Proposition~\ref{prop_boundedspace}.
\end{example}

\begin{example}
Free monoids of finite rank are strongly $0$-hyperbolic with respect to their free generating sets. Indeed,
the underlying undirected graph of the Cayley graph is a tree, so this follows from Proposition~\ref{prop_underlyingtree}.
\end{example}

\begin{example}
The bicyclic monoid $\mathcal{B} = \langle p, q \mid pq = 1 \rangle$ is strongly $0$-hyperbolic with respect to the standard
generating set $\lbrace p, q \rbrace$. Again, the underlying undirected graph of the Cayley graph (see
Figure~\ref{fig_bicyclic}) is a tree, so this follows from Proposition~\ref{prop_underlyingtree}.
\end{example}

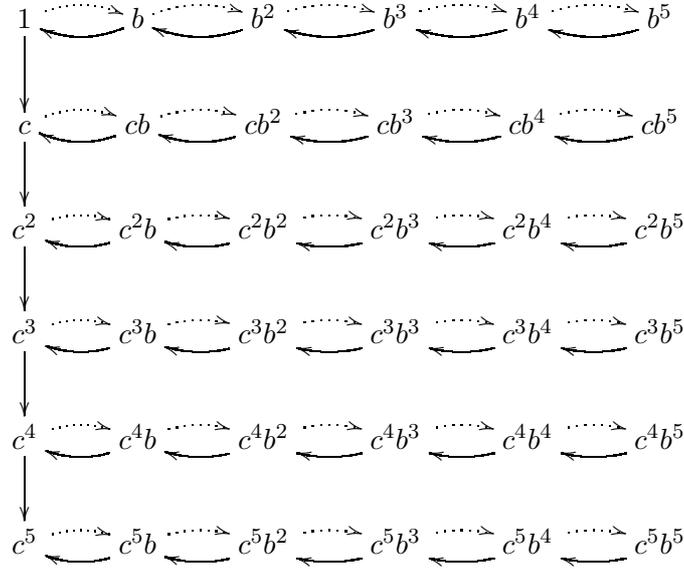
\begin{figure}
\begin{equation*}
\xy
\xymatrix{ 
1 \ar@{->}[d] \ar@{.>}@/^/[r] \ar@{<-}@/_/[r] & 
b \ar@{.>}@/^/[r] \ar@{<-}@/_/[r] & {b^2}  \ar@{.>}@/^/[r] \ar@{<-}@/_/[r] & {b^3}  \ar@{.>}@/^/[r] \ar@{<-}@/_/[r] & {b^4}  \ar@{.>}@/^/[r] \ar@{<-}@/_/[r] & {b^5}      \\
c \ar@{->}[d] \ar@{.>}@/^/[r] \ar@{<-}@/_/[r] & 
cb \ar@{.>}@/^/[r] \ar@{<-}@/_/[r] & {cb^2}  \ar@{.>}@/^/[r] \ar@{<-}@/_/[r] & {cb^3}  \ar@{.>}@/^/[r] \ar@{<-}@/_/[r] & {cb^4}  \ar@{.>}@/^/[r] \ar@{<-}@/_/[r] & {cb^5}      \\
c^2 \ar@{->}[d] \ar@{.>}@/^/[r] \ar@{<-}@/_/[r] & 
c^2b \ar@{.>}@/^/[r] \ar@{<-}@/_/[r] & {c^2b^2}  \ar@{.>}@/^/[r] \ar@{<-}@/_/[r] & {c^2b^3}  \ar@{.>}@/^/[r] \ar@{<-}@/_/[r] & {c^2b^4}  \ar@{.>}@/^/[r] \ar@{<-}@/_/[r] & {c^2b^5}      \\
c^3 \ar@{->}[d] \ar@{.>}@/^/[r] \ar@{<-}@/_/[r] & 
c^3b \ar@{.>}@/^/[r] \ar@{<-}@/_/[r] & {c^3b^2}  \ar@{.>}@/^/[r] \ar@{<-}@/_/[r] & {c^3b^3}  \ar@{.>}@/^/[r] \ar@{<-}@/_/[r] & {c^3b^4}  \ar@{.>}@/^/[r] \ar@{<-}@/_/[r] & {c^3b^5}      \\
c^4 \ar@{->}[d] \ar@{.>}@/^/[r] \ar@{<-}@/_/[r] & 
c^4b \ar@{.>}@/^/[r] \ar@{<-}@/_/[r] & {c^4b^2}  \ar@{.>}@/^/[r] \ar@{<-}@/_/[r] & {c^4b^3}  \ar@{.>}@/^/[r] \ar@{<-}@/_/[r] & {c^4b^4}  \ar@{.>}@/^/[r] \ar@{<-}@/_/[r] & {c^4b^5}      \\
c^5  \ar@{.>}@/^/[r] \ar@{<-}@/_/[r] & 
c^5b \ar@{.>}@/^/[r] \ar@{<-}@/_/[r] & {c^5b^2}  \ar@{.>}@/^/[r] \ar@{<-}@/_/[r] & {c^5b^3}  \ar@{.>}@/^/[r] \ar@{<-}@/_/[r] & {c^5b^4}  \ar@{.>}@/^/[r] \ar@{<-}@/_/[r] & {c^5b^5}      
}
\endxy
\end{equation*}
\caption{A partial view of the right Cayley graph of the bicyclic monoid $\langle b,c \; | \; bc=1 \rangle$ where 
$\rightarrow$ corresponds to multiplication by $c$ and $\dashrightarrow$ corresponds to multiplication by $b$.}
\label{fig_bicyclic}
\end{figure}

\begin{example}
Polycyclic monoids of finite rank are strongly $1$-hyperbolic with respect to their standard generating
sets. Recall that for $n \geq 2$ the polycyclic monoid $\mathcal{P}_n$ of rank $n$ is given by the presentation
$$\langle p_1, \dots, p_n, q_1, \dots, q_n, z \mid p_i q_i = 1, p_i q_j = z = p_i z = q_i z = z p_i = z q_i \textrm{ for all } i \neq j \rangle.$$
The generator $z$ represents a zero element. Let $Y$ be the graph obtained from
the Cayley graph of $\mathcal{P}_n$ (with respect to the generating set from the presentation above) by removing the vertex $z$ and all edges incident with
it. Then it is straightforward to verify that the underlying undirected graph
is a tree, and hence is strongly $0$-hyperbolic by
Proposition~\ref{prop_underlyingtree}. Moreover, the Cayley graph of $\mathcal{P}_n$
can be recovered from $Y$ by the construction in Proposition~\ref{prop_adjoinsink},
so by that proposition $\mathcal{P}_n$ itself is strongly $1$-hyperbolic.
\end{example}

\begin{example}
Word hyperbolic groups are strongly hyperbolic in our sense. Indeed, suppose
$G$ is word hyperbolic and choose a finite generating set $S$ for $G$ which
is closed under the taking of inverses. Then $S$ is also a monoid generating
set for $G$, and the distance function on the monoid Cayley graph is the
same as that on the group Cayley graph. The claim now follows from the usual
``thin triangle'' property of hyperbolic metric spaces.
\end{example}

We can also expand our class of examples by showing closure under some
elementary semigroup-theoretic constructions.

\begin{proposition}\label{prop_adjoinzero}
Let $M$ be a monoid, and let $M^0$ be the monoid obtained from $M$ by
adjoining a new element $0$ which acts as a zero element. Then $M$ is
strongly hyperbolic if and only if $M^0$ is strongly hyperbolic.
\end{proposition}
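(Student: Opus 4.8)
The plan is to realise the right Cayley graph of $M^0$, with respect to an appropriate generating set, as essentially the graph $X^0$ produced from the right Cayley graph of $M$ by the construction of Proposition~\ref{prop_adjoinsink}, and then simply invoke that proposition. The first task is to understand how generating sets of $M$ and $M^0$ correspond. I would observe that $0$ must lie in \emph{every} generating set $T$ of $M^0$: since $M$ is a submonoid of $M^0$, any product of elements of $M^0$ none of which equals $0$ already lies in $M$, so $0$ cannot be written as a product of elements of $M^0 \setminus \lbrace 0 \rbrace$, and hence $0 \in T$. Conversely, if $S$ is a finite generating set for $M$ then $S \cup \lbrace 0 \rbrace$ is a finite generating set for $M^0$; and if $T$ is any finite generating set for $M^0$ then $S := T \setminus \lbrace 0 \rbrace$ is a finite generating set for $M$, because every element of $M$ is a product of elements of $T$ and any such product using the letter $0$ would equal $0 \notin M$.

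Next I would identify the Cayley graphs. Fix a finite generating set $S$ of $M$, let $X$ be the right Cayley graph of $M$ with respect to $S$, and put $T = S \cup \lbrace 0 \rbrace$. In the right Cayley graph of $M^0$ with respect to $T$: the edges labelled by letters of $S$ joining vertices of $M$ are exactly the edges of $X$ (using that multiplication in $M$ agrees with that in $M^0$); the letter $0$ contributes an edge from each $m \in M$ to $m0 = 0$; and each letter of $T$ contributes a loop at the vertex $0$. Discarding these loops at $0$ — which, by the convention in Section~\ref{sec_semimetric} that loops and multiple edges are irrelevant to the semimetric structure, does not change the semimetric space — leaves precisely the graph $X^0$ of Proposition~\ref{prop_adjoinsink}, with $z = 0$.

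With this in hand the conclusion is immediate from Proposition~\ref{prop_adjoinsink}. If $M$ is strongly hyperbolic, say strongly $\delta$-hyperbolic with respect to $S$, then $X$ is strongly $\delta$-hyperbolic, so $X^0$ is strongly $\max(1,\delta)$-hyperbolic, and hence $M^0$ is strongly hyperbolic with respect to $S \cup \lbrace 0 \rbrace$. Conversely, if $M^0$ is strongly $\delta$-hyperbolic with respect to a finite generating set $T$, then $0 \in T$ as noted, and with $S = T \setminus \lbrace 0 \rbrace$ the graph $X^0$ is strongly $\delta$-hyperbolic; Proposition~\ref{prop_adjoinsink} then gives that $X$ is strongly $\delta$-hyperbolic, i.e. $M$ is strongly hyperbolic with respect to $S$.

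The substantive point — the only one I expect to require any care — is the generating-set bookkeeping in the first paragraph: one must check that $0$ belongs to every generating set of $M^0$ and that deleting it returns a generating set of $M$, so that the quantifier ``for some finite generating set'' in the definition of strong hyperbolicity transfers correctly between $M$ and $M^0$. The remaining steps are a routine translation into the already-established Proposition~\ref{prop_adjoinsink}, the only mild wrinkle being the harmless presence of loops at the new vertex.
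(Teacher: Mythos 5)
Your proof is correct and follows essentially the same route as the paper's: recognise the Cayley graph of $M^0$ as the graph $X^0$ obtained from the Cayley graph of $M$ by the construction of Proposition~\ref{prop_adjoinsink}, note that $0$ lies in every generating set of $M^0$ (since $M^0 \setminus \{0\}$ is a submonoid) and that deleting it returns a generating set of $M$, and invoke that proposition in each direction. The only thing you add is the explicit remark that the loops at the vertex $0$ are immaterial to the semimetric structure, a point the paper leaves implicit.
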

\begin{proof}
Suppose $M$ is strongly hyperbolic with respect to a generating set $S$.
Then $M^0$
is generated by $S \cup \lbrace 0 \rbrace$. The Cayley graph of $M^0$ with
respect
to this generating set is
clearly obtained from that of $M$ by the construction in
Proposition~\ref{prop_adjoinsink}, and so by the proposition $M^0$ is strongly hyperbolic.

Conversely, suppose $M^0$ is strongly hyperbolic with respect to a generating set
$T$. Since $M^0 \setminus \lbrace 0 \rbrace$ is a subsemigroup of $M^0$,
we must have $0 \in T$. Now $T \setminus \lbrace 0 \rbrace$ is a generating
set for $M$, and just as above the Cayley graph for $M^0$ with respect to $T$ is again
obtained from that for $M$ with respect to $T \setminus \lbrace 0 \rbrace$
by the construction in Proposition~\ref{prop_adjoinsink}. Thus, by the proposition,
$M$ is strongly hyperbolic
\end{proof}

\begin{proposition}\label{prop_reesquotient}
Let $M$ be a finitely generated monoid and let $I$ be an ideal of $M$.
If $M$ is strongly $\delta$-hyperbolic then the Rees quotient $M/I$ is
strongly hyperbolic. 
\end{proposition}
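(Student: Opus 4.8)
The plan is to work directly with Cayley graphs and the definition of strong $\delta$-hyperbolicity, showing that the Cayley graph of the Rees quotient $M/I$ (with respect to a suitable generating set) is strongly $\delta'$-hyperbolic for $\delta' = \max(1,\delta)$. First I would fix a finite generating set $S$ for $M$ with respect to which the right Cayley graph $\Gamma$ is strongly $\delta$-hyperbolic. The elements of $M/I$ are the elements of $M \setminus I$ together with a single zero element $0$ (the class of $I$), and $S$ maps onto a generating set for $M/I$; to be safe one adjoins $0$ to the generating set as well, so that there is always an edge from any vertex to $0$. The key observation is then that the Cayley graph $\Gamma'$ of $M/I$ is, up to the edges into and out of $0$, exactly the subgraph of $\Gamma$ induced on $M \setminus I$: for $m, n \in M \setminus I$ and $s \in S$ there is an edge $m \to n$ labelled $s$ in $\Gamma'$ if and only if $ms = n$ in $M$, which (since $ms = n \notin I$) holds if and only if there is such an edge in $\Gamma$. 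Thus $\Gamma'$ is obtained from the induced subgraph of $\Gamma$ on $M \setminus I$ by adjoining a sink vertex $0$ with an edge from every vertex, precisely the construction of Proposition~\ref{prop_adjoinsink}.

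The next step is to compare distances. A directed path in $\Gamma'$ among vertices of $M \setminus I$ that avoids $0$ corresponds to a directed path in $\Gamma$; conversely a directed path in $\Gamma$ between two vertices of $M \setminus I$ might pass through $I$, so the distance in $\Gamma'$ between $m,n \in M\setminus I$ is at most the $\Gamma$-distance but could be strictly larger — but this only potentially \emph{increases} distances and hence can only make geodesics in $M\setminus I$ into non-geodesics, never the reverse. So I would argue: let $X$ be the induced subgraph of $\Gamma$ on $M\setminus I$. The point is not that $X$ has the same metric as $\Gamma$, but that $X$ is itself strongly $\delta$-hyperbolic. This would follow if every geodesic triangle in $X$ is a geodesic triangle in $\Gamma$ — which in turn needs that geodesics in $X$ between vertices of $M\setminus I$ are also geodesics in $\Gamma$. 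That is exactly the subtle point, and it is where left cancellativity or some hypothesis might be needed; however, here we are given strong $\delta$-hyperbolicity of $M$ outright, and I suspect the cleanest route avoids this issue entirely by a direct thin-triangle argument in $\Gamma'$ rather than passing through $X$.

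Concretely, the final argument I would give: let $(p,q,r)$ be a directed geodesic triangle in $\Gamma'$. If it avoids $0$ altogether, then since the edges of $\Gamma'$ not involving $0$ are edges of $\Gamma$, the three sides are paths in $\Gamma$; one checks they are still geodesic in $\Gamma$ — if a side had a shortcut in $\Gamma$, that shortcut would pass through $I$, but an edge into a vertex of $I$ from $M\setminus I$ has no continuation back out to $M\setminus I$ in a way shorter than going around, so with a little care the geodesic property survives — hence the triangle is $\delta$-thin in $\Gamma$ and therefore $\delta$-thin in $\Gamma'$. If the triangle contains $0$, then since $0$ is a sink (no non-loop edges leave it), the common endpoint $\tau q = \tau r = \iota p$-side configuration forces, exactly as in the proof of Proposition~\ref{prop_adjoinsink}, that the two sides ending at $0$ have length $1$ and their only vertices are triangle vertices, while every vertex of the remaining side lies in an in-ball of radius $1$ about $0$; so the triangle is $1$-thin. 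In all cases the triangle is $\max(1,\delta)$-thin, so $M/I$ is strongly $\max(1,\delta)$-hyperbolic, hence strongly hyperbolic. The main obstacle is the claim that a geodesic of $\Gamma'$ not meeting $0$ is geodesic (or at least $\delta$-thin-compatible) in $\Gamma$: handling paths in $\Gamma$ that dip into the ideal $I$ requires the structural observation that once such a path enters $I$ it can never return to $M\setminus I$, so no genuine shortcut through $I$ exists between two non-ideal vertices — this should be the crux of the careful writing.
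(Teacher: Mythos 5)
Your proposal is correct and follows essentially the same route as the paper: identify the Cayley graph of $M/I$ (with $0$ adjoined to the generating set) as the induced subgraph of $\Gamma$ on $M\setminus I$ with a sink $0$ adjoined, observe that since $I$ is an ideal distances between elements of $M\setminus I$ agree in the two Cayley graphs, and then split into the two cases (triangle avoiding $0$, triangle containing $0$) exactly as in Proposition~\ref{prop_adjoinsink}. The one slip is the sentence claiming the $\Gamma'$-distance is ``at most'' the $\Gamma$-distance --- you mean ``at least'' --- but the subsequent reasoning (once a path enters $I$ it cannot return to $M\setminus I$, so there are no shortcuts through the ideal) is correct and is precisely what the paper's unelaborated assertion $d_X(a,b)=d_Y(a,b)$ rests on.
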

\begin{proof}
Suppose that $M$ is strongly hyperbolic with respect to a finite generating
set $S$, and let $0$ be the $0$ element in the Rees quotient $M/I$. Then
$M / I$ is generated by the set $A = (S \cap (M \setminus I)) \cup \lbrace 0 \rbrace$. Let $X$ be the Cayley graph of $M / I$ with respect to $A$,
and $Y$ the Cayley graph of $M$ with respect to $S$. Let $x_0 \in X$ be the
vertex of $X$ corresponding to the zero element $0$ of $M / I$. Notice that
for any two non-$0$ elements of $M / I$ (that is, elements of $M \setminus I$),
we have $d_X(a,b) = d_Y(a,b)$.

We claim that $X$ is strongly $\max(1,\delta)$-hyperbolic. Let $(p,q,r)$
be a directed geodesic triangle in $X$. If $(p,q,r)$ does not involve the
vertex $0$ then the distances between vertices visited are the same in $X$ as
in $Y$; but $Y$ is strongly $\delta$-hyperbolic, so $(p,q,r)$ is $\delta$-thin in $Y$
and hence in $X$. Otherwise, $(p,q,r)$ contains the vertex $0$ and, 
arguing as in the proof of Proposition~\ref{prop_adjoinsink},
since in $X$ there are no edges out of $0$, it follows that the common
endpoint of $q$ and $r$ is $0$. But every point of $(p,q,r)$ has a
directed geodesic of length at most $1$ to $0$. It follows that $(p,q,r)$
is $1$-thin, completing the proof. 
\end{proof}

\begin{corollary}\label{cor_subsemigroup}
Let $M$ be a strongly hyperbolic monoid, and $S$ a submonoid which is
the complement of an ideal. Then $S$ is strongly hyperbolic.
\end{corollary}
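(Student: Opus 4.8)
The plan is to deduce this directly from Proposition~\ref{prop_reesquotient}. Suppose $S$ is a submonoid of the strongly hyperbolic monoid $M$, and that $S = M \setminus I$ for some ideal $I$ of $M$. Since $S$ is closed under multiplication and is the complement of $I$, the ideal $I$ is exactly the set of elements of $M$ not lying in $S$. First I would form the Rees quotient $M/I$; by Proposition~\ref{prop_reesquotient} this is strongly hyperbolic (with respect to a suitable finite generating set, since $M$ is finitely generated).

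Next I would identify $M/I$ with $S$ together with a zero element adjoined: as a set, $M/I = (M \setminus I) \cup \{0\} = S \cup \{0\}$, and the multiplication on the nonzero part is precisely the multiplication of $S$ (using that $S$ is a submonoid, so products of elements of $S$ stay in $S$ and are never sent to $0$), while any product involving an element of $I$ lands in $I$ and hence becomes $0$. Thus $M/I \cong S^0$ in the notation of Proposition~\ref{prop_adjoinzero}. Since $S$ is a submonoid it already contains the identity of $M$ (a monoid generating set issue: one should note $S$ is finitely generated because $M$ is and $S$ is a complement of an ideal, so finitely many of the generators of $M$ together with their relevant products suffice — or simply observe $S^0$ inherits finite generation from $M/I$). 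Applying Proposition~\ref{prop_adjoinzero} in the reverse direction, $S^0 = M/I$ strongly hyperbolic implies $S$ is strongly hyperbolic.

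The main point requiring care is the claim $M/I \cong S^0$; specifically, one must check that when two elements $a, b \in S$ are multiplied, their product $ab$ (computed in $M$) lies in $S$ rather than in $I$, so that the Rees quotient does not collapse it to $0$. This is exactly the hypothesis that $S$ is a submonoid of $M$ (closed under the multiplication of $M$), together with the fact that $S \cap I = \emptyset$. The only other thing to verify is that $S$, viewed as a monoid in its own right, has the same identity element as $M$ (true since a submonoid of a monoid shares its identity by definition) and is finitely generated, which follows since $M/I \cong S^0$ is finitely generated (being a Rees quotient of the finitely generated monoid $M$) and removing an adjoined zero from a finitely generated monoid leaves a finitely generated monoid. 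I do not expect any real obstacle here; the corollary is essentially a repackaging of Proposition~\ref{prop_reesquotient} and Proposition~\ref{prop_adjoinzero}.
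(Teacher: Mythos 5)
Your proposal is correct and is exactly the paper's own argument: set $I = M \setminus S$, observe that the Rees quotient $M/I$ is isomorphic to $S^0$, and chain together Proposition~\ref{prop_reesquotient} and Proposition~\ref{prop_adjoinzero}. The extra remarks you make about finite generation and about checking $M/I \cong S^0$ are sound but not needed beyond what the paper records.
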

\begin{proof}
Let $I = M \setminus S$. Then the Rees quotient $M / I$ is isomorphic
to $S^0$, so the result follows from Propostions~\ref{prop_adjoinzero}
and \ref{prop_reesquotient}.
\end{proof}

Recall that an element $x$ of a monoid is called a \textit{unit} if there
is an element $y$ such that $xy = yx = 1$; the set of all units forms a
(maximal) subgroup of $M$.
\begin{corollary}
The group of units of a cancellative strongly hyperbolic monoid is a hyperbolic group.  
\end{corollary}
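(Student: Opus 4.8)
The plan is to exhibit the group of units $G$ of the cancellative strongly $\delta$-hyperbolic monoid $M$ as (isometric to) a strongly connected component of the right Cayley graph of $M$, and then apply the earlier structural results to conclude that $G$, viewed as an undirected Cayley graph, is hyperbolic in the classical sense. First I would fix a finite generating set $S$ for $M$ with respect to which $M$ is strongly $\delta$-hyperbolic; since $G$ is a subgroup, every element of $G$ is a unit, so left and right translations by elements of $G$ are bijections on $M$. The key observation is that in the right Cayley graph of $M$, the vertex set $G$ is precisely the strongly connected component of the identity: if $g \in G$ then $d(1,g) < \infty$ (a word representing $g$) and $d(g,1) < \infty$ (a word representing $g^{-1}$, pre-multiplied appropriately using that $S$ generates $M$ and $g$ is a unit — more carefully, since $g^{-1} \in M$ there is a word $w$ with $gw = 1$, so there is a directed path from $g$ to $1$); conversely, if $v$ lies in the same strong component as $1$, then $d(1,v)<\infty$ and $d(v,1)<\infty$, so there are words $u,w$ over $S$ with $u = v$ and $vw = 1$ in $M$; left cancellativity (or a short argument with units) then forces $v$ to be a unit.

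Next I would invoke Proposition~\ref{prop_component}: the strong component $G$ is itself a strongly $\delta$-hyperbolic directed graph, and distances within it agree with distances in the full Cayley graph. Now I must compare this induced directed graph structure on $G$ with the usual (group) Cayley graph of $G$. Here is the crucial point: by left-cancellativity together with the fact that each $g \in G$ acts invertibly, the subgraph on $G$ is \emph{vertex-transitive} under left multiplication by $G$, and more importantly it is ``symmetric'' in the following sense — if there is a directed edge $v \to vs$ with $v, vs \in G$, then $s$ need not lie in $G$, but the distance function restricted to $G$ is symmetric up to a bounded factor by Corollary~\ref{cor_poly} (the component is $(\lambda,0)$-quasi-metric, provided $M$'s Cayley graph is locally finite, which it is since $S$ is finite). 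Thus on $G$ the asymmetric distance $d$ and its reverse are boundedly equivalent, so $d$ together with its symmetrisation $\tilde d(x,y) = \max(d(x,y), d(y,x))$ defines a genuine metric on $G$ quasi-isometric to $d$.

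Finally, I would identify $(G, \tilde d)$ up to quasi-isometry with a word metric on $G$: choosing the finite set $T \subseteq G$ of those units expressible by $S$-words of length at most some fixed bound (enough to generate $G$ — one checks $G$ is finitely generated, e.g. because $\tilde d$-balls in $G$ are finite and $G$ is $\tilde d$-connected, or by a standard Reidemeister–Schreier-type argument), the word metric of $G$ with respect to $T$ is quasi-isometric to $\tilde d$. Since strong $\delta$-hyperbolicity of the directed graph on $G$ implies (via the polygon quasi-inequality, Theorem~\ref{thm_poly}, or directly from $\delta$-thinness of directed triangles together with the quasi-metric bound) that the symmetrised space $(G,\tilde d)$ is a hyperbolic metric space, and hyperbolicity is a quasi-isometry invariant, the word-metric Cayley graph of $G$ is hyperbolic; that is, $G$ is a word hyperbolic group. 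The main obstacle I anticipate is the middle step: carefully controlling the relationship between the directed thin-triangle condition on the component $G$ and classical $\delta'$-thinness of geodesic triangles in the \emph{symmetrised} metric — one has to check that a $\tilde d$-geodesic triangle can be approximated by a directed geodesic triangle (or a directed geodesic polygon) whose sides are within bounded Hausdorff distance, so that directed $\delta$-thinness transfers to a classical thin-triangle bound, with the bound depending only on $\delta$ and the quasi-metric constant $\lambda$.
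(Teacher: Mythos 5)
Your route is genuinely different from the paper's and deserves comparison. The paper observes that the complement of the group of units $G$ in a cancellative monoid is an ideal and applies Corollary~\ref{cor_subsemigroup} (which goes via the Rees quotient, Propositions~\ref{prop_reesquotient} and \ref{prop_adjoinzero}); you instead identify $G$ as the strongly connected component of $1$ in the right Cayley graph and apply Proposition~\ref{prop_component}. Your argument is the more elementary of the two and avoids the quotient machinery entirely; both land at the same place, since the strong component of $1$ with its induced directed structure is precisely the right Cayley graph of $G$ over the generating set $S \cap G$, which is also what Corollary~\ref{cor_subsemigroup} produces. Two small slips in your write-up: if $v, vs \in G$ with $s \in S$ then automatically $s = v^{-1}(vs) \in G$, so your caveat that ``$s$ need not lie in $G$'' is wrong (and unneeded --- the induced subgraph on $G$ is literally the directed Cayley graph of $G$ over $S\cap G$); and the right Cayley graph of $M$ need not be locally finite (indegree may be unbounded), so Corollary~\ref{cor_poly} should be applied to the subgraph on $G$ rather than to all of $M$ --- which is fine, since that subgraph is strongly hyperbolic by Proposition~\ref{prop_component} and has in- and out-degree bounded by $|S\cap G|$. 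Finally, you are right to flag the conversion from ``strongly hyperbolic directed Cayley graph of a group over a possibly non-symmetric generating set'' to ``hyperbolic group'' as the delicate step; the paper's one-line proof elides this as well (the example in Section~\ref{sec_hypmonoids} only gives the converse direction), and your symmetrisation sketch using the quasi-metric constant $\lambda$ together with quasi-isometry invariance of hyperbolicity is the right way to carry it out.
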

\begin{proof}
It is well-known and easy to prove that the complement of the group
of units in a cancellative monoid forms an ideal, so this follows
from Corollary~\ref{cor_subsemigroup}.
\end{proof}
Of course, the converse to the latter corollary does not hold in general:
for example, the free commutative monoid of rank two is cancellative
with trivial (hence hyperbolic) group of units, but is itself not a strongly
hyperbolic monoid.

\section{Directed $2$-Complexes, Presentations and Dehn Functions}\label{sec_graphs}

It is well known that, even if a monoid is given by a finite 
presentation, the word problem for the monoid may be undecidable. Markov 
\cite{Markov47} and Post \cite{Post47} proved independently that the word
problem for finitely 
presented monoids is undecidable in general; this result was extended
by Turing \cite{Turing50} to cancellative semigroups, and then by 
Novikov and Boone to groups (see \cite{LyndonandSchupp} for references).
For classes of monoids that do 
have decidable word problem it is natural to consider the complexity of 
the word problem. For example, monoids which admit presentations by 
finite complete rewriting systems have solvable word problem, but there 
is no bound on the complexity of the word problem for such monoids; see 
\cite{BauerOtto84}. On the other hand, automatic monoids have word problem that is solvable in 
quadratic time \cite[Corollary 3.7]{campbell_autsg}. As mentioned in the 
introduction, a finitely presented group is word hyperbolic exactly if 
it has linear Dehn function; recent results of Cain \cite{CainArXiV} 
show that word-hyperbolic semigroups (in the sense of Duncan and Gilman 
\cite{Duncan04}) have word problem solvable in polynomial time.
A remarkable result of Birget \cite{Birget98} characterises finitely generated
semigroups with word problem in $\mathcal{NP}$ as exactly those embeddable in
finitely presented semigroups with polynomial Dehn function. (An
analogous statement for groups was proved later \cite{BirgetAnn, SapirAnn}.)

Our main aim in this section is to prove that finitely generated, left
cancellative, strongly $\delta$-hyperbolic monoids are finitely presented with
polynomial Dehn functions, and therefore admit non-deterministic polynomial-time
word problem solutions. Our proof is most easily and intuitively expressed
in the language of \textit{direct $2$-complexes} \cite{K_qsifp,Guba06},
so we begin by briefly recalling some definitions and results concerning these.

Let $P(\Gamma)$ denote the set of all directed paths in a directed graph
$\Gamma$, including empty paths at each vertex. A {\em directed $2$-complex}
is a directed graph $\Gamma$ equipped with a set $F$ (called the {\em set
of $2$-cells}), and three maps $\topp{\cdot}\colon F \to P(\Gamma)$,
$\bott{\cdot}\colon F \to P(\Gamma)$, and $^{-1}\colon F \to F$ called {\em top},
{\em bottom}, and {\em inverse} such that
\begin{itemize}
\item for every $f\in F$, the paths $\topp{f}$ and $\bott{f}$ are
parallel; 
\item $^{-1}$ is an involution without fixed points, and
$\topp{f^{-1}}=\bott{f}$, $\bott{f^{-1}}=\topp{f}$ for every $f\in F$.
\end{itemize}

If $K$ is a directed $2$-complex, then the directed paths on $K$ are called
{\em $1$-paths}. For every $2$-cell $f\in F$, the vertices
$\iota(\topp{f})=\iota(\bott{f})$ and $\tau(\topp{f})=\tau(\bott{f})$ are
denoted $\iota(f)$ and $\tau(f)$, respectively.

An \emph{atomic $2$-path} is a triple $(p,f,q)$, where $p$, $q$ are $1$-paths in $K$, and $f \in F$ such that $\tau(p) = \iota(f)$, $\tau(f) = \iota(q)$. If $\delta$ is an atomic $2$-path then we use $\topp{\delta}$ to denote $p \topp{f} q$ and $\bott{\delta}$ is denoted by $p \bott{f} q$, these are the top and bottom $1$-paths of the atomic $2$-path. A \emph{non-trivial} $2$-path $\delta$ in $K$ is then a sequence of atomic paths $\delta_1$, $\ldots$, $\delta_n$, where $\bott{\delta_i} = \topp{\delta_{i+1}}$ for every $1 \leq i < n$, and the length of this $2$-path is $n$. The top and bottom $1$-paths of $\delta$, denoted $\topp{\delta}$ and $\bott{\delta}$ are then defined as $\topp{\delta_1}$ and $\bott{\delta_n}$, respectively. 

We use $\delta \circ \delta'$ to denote the composition of two $2$-paths. We say that $1$-paths $p$, $q$ in $K$ are homotopic if there exists a $2$-path $\delta$ such that $\topp{\delta} = p$ and $\bott{\delta} = q$. 
Recall that a pair of paths $p,q \in P(\Gamma)$ are said to be parallel, written $p \parallel q$, if $\iota p = \iota q$ and $\tau p = \tau q$. 
We say that a directed $2$-complex $K$ is \emph{directed simply connected} if for every pair of parallel paths $p \parallel q$, $p$ and $q$ are homotopic in $K$. 

Let $K$ be a directed $2$-complex with underlying directed graph $\Gamma$
and set of $2$-cells $F$, and let $T = (p,q,r)$ be a directed triangle in
$\Gamma$. Now let $K'$ be the $2$-complex obtained from $K$ by adjoining
one new element $f$ to $F$ satisfying $\topp{f} = p \circ q$ and
$\bott{f} = r$. We call $K'$ the directed $2$-complex obtained from $K$ by
adjoining a $2$-cell for the triangle $T$.  

\begin{definition}[Tessellation]
Given a pair of parallel paths $p$ and $q$ in a directed graph, we say
that a set $T_1, T_2, \ldots, T_r$ of directed geodesic triangles
\emph{tessellates} $p$ and $q$ if in the $2$-complex $K$ obtained by
adjoining $2$-cells for each $T_i$ we have $p \sim_K q$. We say
that a set of geodesic triangles \emph{tessellates} a directed
$n$-gon $(p_1, \dots, p_{n-1}, r)$ if it tessellates the paths
$p_1 \circ \dots \circ p_{n-1}$ and $r$.
\end{definition}

\begin{definition}
Given a directed triangle $T = (p,q,r)$ in a directed graph, we define the \textit{size} $\Sigma(T)$ of
$T$ to be $|p| + |q|$.  
\end{definition}

Our strategy for establishing our main result is to show that in a
strongly $\delta$-hyperbolic directed graph we can tessellate the ``gap''
between two parallel paths with (polynomially many, as a function of the
path lengths) geodesic triangles of bounded size. We begin by showing
that every pair of parallel paths can be tessellated by geodesic triangles
(of not necessarily bounded size).

\begin{lemma}
\label{lem_FillingFish}
Let $\Gamma$ be a directed graph. 
Every pair of parallel paths $p \parallel q$ in $\Gamma$ can be tessellated
by $|p| + |q| + 1$ directed geodesic triangles of size at most $2(|p| + |q|)$.
\end{lemma}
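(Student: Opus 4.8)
The plan is to fill the gap between $p$ and $q$ with a ``fan'' triangulation based at their common initial vertex. Write $p = [x_0,\dots,x_n]$ and $q = [y_0,\dots,y_m]$, so that $x_0 = y_0 =: u$, $x_n = y_m =: v$, $n = |p|$ and $m = |q|$. For each $0 \le i \le m$ I would fix a geodesic $g_i$ from $u$ to $y_i$ (one exists, since the length-$i$ prefix of $q$ witnesses $d(u,y_i) \le i < \infty$), choosing $g_0$ to be the empty path at $u$; symmetrically, fix geodesics $h_j$ from $u$ to $x_j$ for $0 \le j \le n$, with $h_0$ empty. Note that each single edge $[y_i,y_{i+1}]$, being a length-$1$ path between distinct vertices, is itself a geodesic (here I assume, harmlessly, that $p$ and $q$ traverse no loop), and similarly for each $[x_j,x_{j+1}]$ and for the empty path $e_v$ at $v$.

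The triangles will then be $T_i = (g_i,\, [y_i,y_{i+1}],\, g_{i+1})$ for $0 \le i < m$; $U_j = (h_j,\, [x_j,x_{j+1}],\, h_{j+1})$ for $0 \le j < n$; and $W = (g_m,\, e_v,\, h_n)$. Each of these is a directed geodesic triangle: for $T_i$, the fact $\tau g_i = y_i$ makes the first two sides composable and $g_i \circ [y_i,y_{i+1}]$ is parallel to $g_{i+1}$ (both run $u \to y_{i+1}$); for $W$, $g_m \circ e_v = g_m$ is parallel to $h_n$ and all three sides are geodesics. This gives a family of $m + n + 1 = |p| + |q| + 1$ directed geodesic triangles, matching the count in the statement.

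Next I would check that these tessellate $p$ and $q$ and that the size bound holds. Let $K$ be the $2$-complex obtained by adjoining a $2$-cell for each $T_i$, each $U_j$, and $W$. The cell of $T_i$ gives $g_i \circ [y_i,y_{i+1}] \sim_K g_{i+1}$, and since homotopy of $1$-paths is preserved by post-composing with a fixed $1$-path, an easy induction on $i$ yields
\[
q \;=\; g_0 \circ [y_0,y_1] \circ \cdots \circ [y_{m-1},y_m] \;\sim_K\; g_m ,
\]
and likewise $p \sim_K h_n$; the cell of $W$ gives $g_m = g_m \circ e_v \sim_K h_n$, whence $p \sim_K h_n \sim_K g_m \sim_K q$, as tessellation requires. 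For the sizes, $\Sigma(T_i) = |g_i| + 1 = d(u,y_i) + 1 \le i + 1 \le m = |q|$, similarly $\Sigma(U_j) \le n = |p|$, and $\Sigma(W) = |g_m| = d(u,v) \le |q|$; so every triangle in fact has size at most $\max(|p|,|q|) \le 2(|p| + |q|)$. (The degenerate case $|p| = |q| = 0$ leaves only $W = (e_u,e_u,e_u)$, of size $0$.)

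The main — and really the only — thing requiring care is the homotopy bookkeeping: one must verify that each of $T_i$, $U_j$, $W$ genuinely satisfies all three clauses of the definition of a directed geodesic triangle (composable first two sides, concatenation parallel to the third, all three sides geodesic, the last being where the harmless loop-free convention on $p$ and $q$ is used), and that the adjoined $2$-cells really produce $p \sim_K q$. The latter reduces to the standard fact that homotopy of $1$-paths in a directed $2$-complex is a congruence for composition (an atomic $2$-path $(x,f,y)$ remains atomic after replacing $x$ by $x' \circ x$ or $y$ by $y \circ y'$), after which the chain $p \sim_K h_n \sim_K g_m \sim_K q$ is immediate. I do not anticipate any genuine difficulty.
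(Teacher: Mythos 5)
Your proof is correct, and it takes a genuinely different route from the paper's. The paper proceeds by induction on $|p|+|q|$: if both paths are already geodesic the pair $(p, e, q)$ (with $e$ the empty path at the common endpoint) is itself a single geodesic triangle; otherwise it locates the shortest non-geodesic prefix $p_\iota \circ p_e$ of (say) $p$, adjoins the single triangle $(p_\iota, p_e, p')$ with $p'$ a geodesic replacing that prefix, and recurses on the strictly shorter pair $(p' \circ p_\tau, q)$. Your construction is non-recursive: you build a ``fan'' of triangles $T_i=(g_i,[y_i,y_{i+1}],g_{i+1})$ along $q$ and $U_j=(h_j,[x_j,x_{j+1}],h_{j+1})$ along $p$, all based at the common start vertex $u$, and close up with the degenerate triangle $W=(g_m,e_v,h_n)$ comparing the two chosen geodesics from $u$ to $v$. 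Both approaches deliver the same count $|p|+|q|+1$; yours in fact gives the sharper size bound $\max(|p|,|q|)$ rather than $2(|p|+|q|)$ (the weaker bound is all the paper needs for Theorem~\ref{thm_polytessellate}, so this makes no downstream difference, but it is a small bonus). Your explicit remark that homotopy is a congruence for $1$-path composition — that $(x,f,y)$ remains an atomic $2$-path after prepending to $x$ or appending to $y$ — is exactly the bookkeeping the inductive approach also silently relies on when it splices $p' \circ p_\tau$ back into a filling of $(p,q)$.

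One small point: your ``harmless'' assumption that $p$ and $q$ traverse no loop is in fact necessary for the lemma to hold as stated — if $\Gamma$ consists of a single vertex $x$ with a loop, the parallel paths $[x,x]$ and $[x]$ cannot be tessellated by geodesic triangles at all, since a loop edge is not a geodesic. The paper's proof has the identical lacuna (its triangle $(p_\iota, p_e, p')$ also fails to be geodesic when the edge $p_e$ is a loop), and the authors never comment on it; you at least flag it. It is genuinely harmless for the intended application because the Cayley graph of a left cancellative monoid with respect to a generating set not containing $1$ has no loops, but strictly speaking the lemma as stated should either exclude loops or restrict $p$ and $q$ to simple paths.
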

\begin{proof}
We prove the result by induction on $|p|+|q|$. The base case is trivial
since any path of length $0$ or $1$ is automatically geodesic. For the
induction step, if $p$ and $q$ are both geodesic then $(p,q)$ itself
naturally may be viewed as a single directed geodesic triangle of the
required size and we
are done. Now suppose that $p$, say, is not geodesic. Decompose
$p = p_\iota \circ p_e \circ p_\tau$ where $p_\iota \circ p_e$ is the shortest non-geodesic
subpath of $p$ and $p_e$ is a directed edge. Let $p'$ be a geodesic path
from $\iota p$ to $\tau p_e$. Then $|p' \circ p_{\tau}| < |p|$ and by induction the pair
$(p' \circ p_\tau, q)$ may be tessellated by $|p' \circ p_\tau| + |q| + 1$
directed geodesic triangles of size at most $2(|p' \circ p_{\tau}| + |q|) < 2(|p|+|q|)$.
Taken together with the directed geodesic
triangle $(p_i, p_e, p')$ which also has size less than $2(|p|+|q|)$ we conclude that $(p,q)$ may be tessellated by 
\[
(|p' \circ p_\tau| + 1) + |q| + 1
\leq
|p| + |q| + 1
\] 
directed geodesic triangles of size at most $2(|p|+|q|)$.
\end{proof}

\begin{lemma}
\label{lem_tessellating}
\begin{sloppypar}
Let $\Gamma$ be a strongly $\delta$-hyperbolic directed graph. Then 
every directed geodesic triangle $T$ can be tessellated by five
directed geodesic triangles (some of which may be trivial triangles
with a single vertex) of size no more than $\frac{3}{4} \Sigma(T) + 2\delta + 1$.
\end{sloppypar}
\end{lemma}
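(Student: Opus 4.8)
The plan is to take a directed geodesic triangle $T = (p,q,r)$ with $\tau p = \iota q$ and $p \circ q \parallel r$, and cut it along a single well-chosen "midpoint" of the longest side, using the thin-triangle hypothesis to bridge the pieces cheaply. Write $|p| = a$, $|q| = b$, $|r| = c$, so $\Sigma(T) = a + b$ and $c \le a + b$. Without loss of generality assume $a \ge b$ (the other case is symmetric after reversing the roles played by out-balls and in-balls). First I would handle the degenerate situation where $a \le 2\delta + 1$: then $\Sigma(T) \le 2a \le 4\delta + 2$, but also $\Sigma(T) = a + b \le 2a$, and one checks directly that $\Sigma(T) \le \tfrac34 \Sigma(T) + 2\delta + 1$ already (since $\tfrac14 \Sigma(T) = \tfrac14(a+b) \le \tfrac14 \cdot 2(2\delta+1) \le 2\delta+1$ when $a \le 2\delta+1$), so $T$ tessellates itself trivially with one triangle, padded by four trivial one-vertex triangles.

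For the main case $a > 2\delta + 1$, pick the vertex $v$ on $p$ at distance $m := \lceil a/2 \rceil$ from $\iota p$, splitting $p = p_1 \circ p_2$ with $|p_1| = m$ and $|p_2| = a - m = \lfloor a/2 \rfloor$. Since $T$ is $\delta$-thin, the vertex $v$ on $p$ lies in $\outball_\delta(r) \cup \inball_\delta(q)$ (the decomposition of the side $p$: out-balls around $r$, which meets the start of $p$, and in-balls around $q$, which meets the end of $p$). I split into two subcases accordingly. If $v \in \outball_\delta(r)$, there is a vertex $w$ on $r$ with $d(w,v) \le \delta$; let $r = r_1 \circ r_2$ at $w$ and let $s$ be a geodesic from $w$ to $v$, so $|s| \le \delta$. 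Then $(r_1, s)$ and $v$ bound one region, and I form the triangle $T_1 = (r_1, \bar s, p_1)$ where I must be a little careful: what I actually get are a triangle on $(p_1, ?, r_1 \circ s)$ and a triangle on the remainder; concretely, $(r_1) \circ (s) \parallel p_1$ gives a geodesic triangle after replacing $r_1 \circ s$ by a geodesic $t$ from $\iota p$ to $v$ of length $\le |r_1| + \delta$, and $p_2 \circ q \parallel r_2 \circ \bar{(\text{something})}$ --- the cleanest bookkeeping is: adjoin a geodesic $t$ from $\iota p$ to $v$; then $(p_1, \bar t)$, $(t, p_2, \text{geodesic } u \text{ from } \iota p \text{ to } \tau q)$, and $(u, r)$ type pieces, all of size controlled by $\max(m, |r_1|) + O(\delta) \le \max(m, c) + O(\delta)$. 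The symmetric subcase $v \in \inball_\delta(q)$ is dual. In every piece the size is at most $\max\bigl(\lceil a/2 \rceil,\ \lfloor a/2\rfloor + b,\ c\bigr) + 2\delta + O(1)$; since $a \ge b$ one has $\lfloor a/2 \rfloor + b \le \lfloor a/2\rfloor + a \le \tfrac32 a \le \tfrac34(a + b) \cdot \tfrac{2a}{a+b}$ — this needs tightening, so the honest bound to aim for is that each piece has size $\le \tfrac34(a+b) + 2\delta + 1 = \tfrac34\Sigma(T) + 2\delta + 1$, using $b \le a$ to get $\lfloor a/2\rfloor + b \le \tfrac12 a + b$ and $\tfrac12 a + b \le \tfrac34(a+b) \iff \tfrac14 a \le \tfrac14 b$... which is false.

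So the genuinely delicate point — and the step I expect to be the main obstacle — is choosing \emph{which} side to cut and \emph{where}, so that every resulting sub-triangle has size at most $\tfrac34\Sigma(T) + 2\delta + 1$. Cutting the long side $p$ in half is not enough when $q$ is also long; the fix is to instead introduce a geodesic $u$ from $\iota p$ to $\tau q$ (the "hypotenuse direction through the split point"), reducing to a triangle with sides $(p_1, \text{stuff})$ of total $\Sigma \le m + |u| \le \lceil a/2\rceil + (\lfloor a/2\rfloor + b) $ — still too big. The correct move, which I would carry out carefully, is the standard hyperbolic one: cut $p$ at its midpoint $v$, use thinness to find $w$ on $r$ or on $q$ within $\delta$ of $v$, and form the two sub-triangles $(p_1, [w,v]^{-1}, r_1)$-type and $(p_2, q, \ldots)$-type; the first has size $\le \lceil a/2 \rceil + |r_1| + \delta$ and the second $\le \lfloor a/2\rfloor + b + \delta$, then re-split whichever of these still exceeds the target by one more bisection, absorbing the loss into the five triangles allotted. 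Tracking that two rounds of bisection of an initially-longest side of length $\le \Sigma(T)$, each costing an additive $\le \delta$ detour, yields all pieces of size $\le \tfrac34\Sigma(T) + 2\delta + 1$, and that the total count is five (one geodesic triangle produced directly, plus up to four from the two bisection steps and their detour triangles, with trivial one-vertex triangles padding out the count when fewer are needed) — that accounting is the crux, and I would lay it out as an explicit case analysis on whether $a$, $b$, or $c$ is the largest and on which ball in the thinness condition captures the midpoint.
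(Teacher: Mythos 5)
Your overall strategy --- bisect the longer of the two composing sides at its midpoint, use the thin-triangle property to find a $\delta$-short path from the midpoint to another side, and recurse once more on the one piece that might still be too large --- is exactly the paper's, so the approach is sound. However, as written the proposal has both a misleading error and a real gap.

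The error: you claim $\tfrac12 a + b \le \tfrac34(a+b)$ is false. In fact $\tfrac12 a + b \le \tfrac34 a + \tfrac34 b$ rearranges to $\tfrac14 b \le \tfrac14 a$, i.e.\ $b \le a$, which is precisely your WLOG assumption. You transposed the inequality, and this phantom failure is what sent you looking for a ``delicate point'' in the wrong place. With that fixed, in the easy case (midpoint of $p$ lands in $\inball_\delta(q)$, so the short path goes from $v$ into the \emph{other leg} $q$) all three resulting triangles already have size $\le \tfrac34\Sigma(T)+\delta+1$ in one step, with no further subdivision needed.

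The genuine gap is in the other case, when the midpoint $v$ of $p$ satisfies $v \in \outball_\delta(r)$ so the short path $s$ joins a point $w$ on the hypotenuse $r$ to $v$. You correctly get one triangle with legs $r_1$ and $s$ and hypotenuse $p_1$, but its size is $|r_1| + |s| \le |r_1| + \delta$, and $|r_1|$ is a prefix of the hypotenuse $r$ with no useful upper bound short of $|r| \le \Sigma(T)$; so this piece is \emph{not} bounded by $\tfrac34\Sigma(T)+O(\delta)$. Moreover the remaining region bounded by $s$, $p_2$, $q$, $r_2$ is a quadrilateral, so you need to insert an additional diagonal geodesic (from $w$ to $\tau q$) to cut it into two triangles, both of which \emph{do} satisfy the bound. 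The remaining problem triangle with legs $r_1$ and $s$ must then itself be bisected: cut $r_1$ at its midpoint $U$, apply thinness to the triangle $(r_1, s, p_1)$ at $U$, and in either of the two resulting sub-cases you obtain three triangles whose sizes involve only $\lfloor |r_1|/2 \rfloor \le \tfrac12\Sigma(T)$ and short $\le 2\delta$ connections, giving bounds of the form $\tfrac12\Sigma(T)+2\delta+1$. This second bisection is what produces the $2\delta$ (rather than $\delta$) in the stated bound, and also accounts for the total of five triangles: two from the first cut plus three from re-cutting the bad one. Your sketch gestures at ``two rounds of bisection'' and ``absorbing the loss'', but this accounting is exactly where the content of the lemma lies and needs to be carried out, not deferred.
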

\begin{proof}
For clarity in this proof, we will use the convention that $XY$ denotes a geodesic path from a vertex
$X$ to a vertex $Y$, while $XYZ$ denotes the directed geodesic triangle $(XY,YZ,XZ)$. (Of course geodesics
are not unique; we will be careful to make clear where the choice is important.)

Let $T = PQR = (PQ, QR, PR)$ be a directed geodesic triangle.
Let $|PQ|=k$, $|QR|=l$ so that $\Sigma(T) = k+l$. Suppose $k \geq l$ (the case $l \geq k$ being dual); it
follows in particular that $l + \frac{1}{2} k \leq \frac{3}{4} (l+k)$.
Let $M$ be the vertex on the geodesic $PQ$ satisfying $d(P,M) = \bott{\frac{k}{2}}$ and
$d(M,Q) = \topp{\frac{k}{2}}$. Since $\Gamma$ is strongly $\delta$-hyperbolic there are now two cases to consider. 

\

\noindent \textbf{\boldmath Case (a): $\outball_\delta(M)$ intersects $QR$.} 
Let $O$ be a point in $\outball_\delta(M)$ which lies on $QR$. Consider geodesics $MO$, $PO$, $PM$,
$MQ$, $QO$ and $OR$, chosen so that $QO \circ OR = QR$ and $PM \circ MQ = PQ$. Consider also the three geodesic
triangles: $T_1 = MQO$, $T_2 = PMO$ and $T_3 = POR$ (see Figure~\ref{fig_casea}). The sizes of these
triangles are bounded as follows:
\begin{eqnarray*}
\Sigma(T_1) & \leq & \topp{\frac{k}{2}} + l \leq \frac{3}{4}(k+l) + 1 = \frac{3}{4} \Sigma(T) + 1, \\
\Sigma(T_2) & \leq & \bott{\frac{k}{2}} + \delta \leq \frac{1}{2} \Sigma(T) + \delta, \\
\Sigma(T_3) & \leq & \bott{\frac{k}{2}} + \delta + l \leq \frac{3}{4} \Sigma(T) + \delta. 
\end{eqnarray*}
So in this case, our triangle is tesselated by the three triangles $T_1$, $T_2$ and $T_3$, which satisfy
the required size bound.

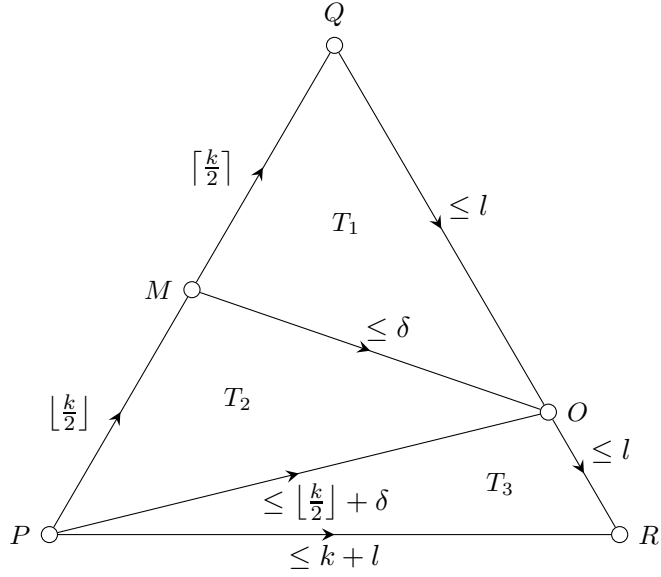
\begin{figure}
\def\x{0.5}
\begin{center}
\begin{tikzpicture}[scale=0.75, decoration={ 
markings,
mark=
at position \x
with 
{ 
\arrow[scale=1.5]{stealth} 
} 
} 
]
\tikzstyle{vertex}=[circle,draw=black, fill=white, inner sep = 0.75mm]
\node (P)  [vertex,label={180:{\small $P$}}] at (0,0) {};
\node (Q)  [vertex,label={90:{\small $Q$}}] at (5,8.65) {};
\node (R)  [vertex,label={360:{\small $R$}}] at (10,0) {};
\node (M)  [vertex,label={180:{\small $M$}}] at (2.5,4.33) {};
\node (O)  [vertex,label={360:{\small $O$}}] at (8.75,2.165) {};
\node (T_1)  [label={360:{\small $T_1$}}] at (4.6,5.5) {};
\node (T_2)  [label={360:{\small $T_2$}}] at (2.7,2.4) {};
\node (T_3)  [label={360:{\small $T_3$}}] at (7.3,0.9) {};
\draw [postaction={decorate}] (P)--(M) node[pos=0.5,left] {$\bott{\frac{k}{2}}$ \; };
\draw [postaction={decorate}] (M)--(Q) node[pos=0.5,left] {$\topp{\frac{k}{2}}$ \; };
\draw [postaction={decorate}] (M)--(O) node[pos=0.5,above] {\; \; $\leq \delta$};
\draw [postaction={decorate}] (P)--(O) node[pos=0.5,below] {\; \; \; $\leq \bott{\frac{k}{2}} + \delta$};
\draw [postaction={decorate}] (Q)--(O) node[pos=0.5,above] {\; \; \; $\le l$};
\draw [postaction={decorate}] (O)--(R) node[pos=0.5,above] {\; \; \; $\le l$};
\draw [postaction={decorate}] (P)--(R) node[pos=0.5,below] {$\le k+l$};
%
\end{tikzpicture}
\end{center}
\caption{Proof of Lemma~\ref{lem_tessellating}, Case (a).}
\label{fig_casea}
\end{figure}

\noindent \textbf{\boldmath Case (b): $\inball_\delta(M)$ intersects $PR$.} Let $O$ be a point in
$\inball_\delta(M)$ which lies on $PR$. Consider geodesics $OM$, $OQ$, $PM$, $MQ$, $PO$ and $OR$
such that $PM \circ MQ = PQ$ and $PO \circ OR = PR$. Consider also the three geodesic triangles
$T_1 = POM$, $T_2=OMQ$ and $T_3=OQR$
(see Figure~\ref{fig_caseb}). The triangles, $T_2$ and $T_3$ have size bounds:
\begin{eqnarray*}
\Sigma(T_2) & \leq & \delta + \topp{\frac{k}{2}} \leq \frac{1}{2} \Sigma(T) + \delta + 1, \\
\Sigma(T_3) & \leq & \topp{\frac{k}{2}} + \delta + l \leq \frac{3}{4}(k+l) + \delta + 1 = \frac{3}{4} \Sigma(T) + \delta + 1, \\
\end{eqnarray*}
It is not immediately clear how to get a such a size bound on $T_1$, so we further subdivide it in the middle of the edge $PO$,
at the point $U$. 
So set $x = |PO|$ and let $U$ be the vertex on the geodesic $PO$ satisfying $d(P,U) = \bott{\frac{x}{2}}$ and
$d(U,O) = \topp{\frac{x}{2}}$. 
Now there are two subcases to consider. 

\begin{figure}
\def\x{0.5}
\begin{center}
\begin{tikzpicture}[scale=0.75, decoration={ 
markings,
mark=
at position \x
with 
{ 
\arrow[scale=1.5]{stealth} 
} 
} 
]
\tikzstyle{vertex}=[circle,draw=black, fill=white, inner sep = 0.75mm]
\node (P)  [vertex,label={180:{\small $P$}}] at (0,0) {};
\node (Q)  [vertex,label={90:{\small $Q$}}] at (5,8.65) {};
\node (R)  [vertex,label={360:{\small $R$}}] at (10,0) {};
\node (M)  [vertex,label={180:{\small $M$}}] at (2.5,4.33) {};
\node (O)  [vertex,label={270:{\small $O$}}] at (4,0) {};
\node (U)  [vertex,label={270:{\small $U$}}] at (2,0) {};
\node (T_1)  [label={360:{\small $T_1$}}] at (1.7,1.5) {};
\node (T_2)  [label={360:{\small $T_2$}}] at (3.1,4.1) {};
\node (T_3)  [label={360:{\small $T_3$}}] at (6,2.5) {};
\draw [postaction={decorate}] (P)--(M) node[pos=0.5,left] {$\bott{\frac{k}{2}}$ \; };
\draw [postaction={decorate}] (M)--(Q) node[pos=0.5,left] {$\topp{\frac{k}{2}}$ \; };
\draw [postaction={decorate}] (O)--(M) node[pos=0.5,left] {$\leq \delta$};
\draw [postaction={decorate}] (Q)--(R) node[pos=0.5,right] {\; $l$};
\draw [postaction={decorate}] (O)--(R) node[pos=0.5,below] {$y$};
\draw [postaction={decorate}] (P)--(U) node[pos=0.5,below] {$\bott{\frac{x}{2}}$};
\draw [postaction={decorate}] (U)--(O) node[pos=0.5,below] {$\topp{\frac{x}{2}}$};
\draw [postaction={decorate}] (O)--(Q) node[pos=0.5,right] {$\leq \topp{\frac{k}{2}} + \delta$};
\end{tikzpicture}
\end{center}
\caption{Proof of Lemma~\ref{lem_tessellating}, Case (b).}
\label{fig_caseb}
\end{figure}
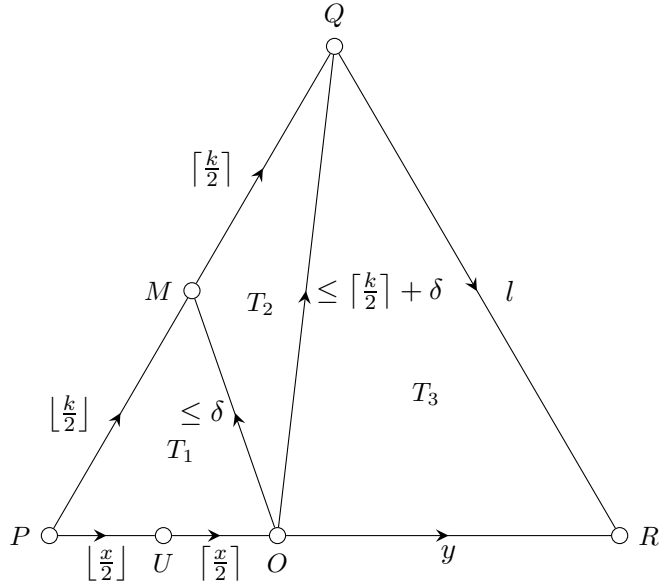

\

\noindent \textbf{\boldmath Case (b)(i): $\outball_\delta(U)$ intersects $OM$.} 
Let $S$ be a point in $\outball_\delta(U)$ which lies on $OM$. Choose geodesics $PU$, $UO$, $UM$, $US$,
$SM$ and $OS$ so that $PU \circ UO = PO$ and $OS \circ SM = OM$.
Now the directed geodesic triangle $POM$ is tessellated by the three triangles $Y_1 = PUM$, $Y_2=USM$ and
$Y_3=UOS$ (see Figure~\ref{fig_casebi}) and the sizes of these three triangles are bounded as follows: 
\begin{eqnarray*}
\Sigma(Y_1) & \leq & 
\bott{\frac{x}{2}} + 2\delta \leq \bott{\frac{k+l}{2}} + 2\delta \leq  \frac{1}{2} \Sigma(T) + 2\delta, \\
\Sigma(Y_2) & \leq & 2\delta, \\
\Sigma(Y_3) & \leq & \topp{\frac{x}{2}} + \delta \leq \topp{\frac{k+l}{2}} + \delta  \leq  \frac{1}{2} \Sigma(T) + \delta + 1.  
\end{eqnarray*}

\begin{figure}
\def\x{0.5}
\begin{center}
\begin{tikzpicture}[scale=0.75, decoration={ 
markings,
mark=
at position \x
with 
{ 
\arrow[scale=1.5]{stealth} 
} 
} 
]
\tikzstyle{vertex}=[circle,draw=black, fill=white, inner sep = 0.75mm]
\node (P)  [vertex,label={180:{\small $P$}}] at (0,0) {};
\node (M)  [vertex,label={90:{\small $M$}}] at (5,8.65) {};
\node (O)  [vertex,label={360:{\small $O$}}] at (10,0) {};
\node (U)  [vertex,label={270:{\small $U$}}] at (5,0) {};
\node (S)  [vertex,label={360:{\small $S$}}] at (8.75,2.165) {};
\node (Y_1)  [label={360:{\small $Y_1$}}] at (3,3.2) {};
\node (Y_2)  [label={360:{\small $Y_2$}}] at (6,2.8) {};
\node (Y_3)  [label={360:{\small $Y_3$}}] at (7.6,0.8) {};
\draw [postaction={decorate}] (P)--(M) node[pos=0.5,left] {$\bott{\frac{k}{2}}$ \; };
\draw [postaction={decorate}] (P)--(U) node[pos=0.5,below] {$\bott{\frac{x}{2}}$ \; };
\draw [postaction={decorate}] (U)--(O) node[pos=0.5,below] {$\topp{\frac{x}{2}}$ \; };
\draw [postaction={decorate}] (U)--(M) node[pos=0.5,left] {$\leq 2 \delta$};
\draw [postaction={decorate}] (U)--(S) node[pos=0.5,left] {$\leq \delta$ \; };
\draw [postaction={decorate}] (O)--(S) node[pos=0.5,right] {\; $\leq \delta$};
\draw [postaction={decorate}] (S)--(M) node[pos=0.5,right] {\; $\leq \delta$};
\end{tikzpicture}
\end{center}
\caption{Proof of Lemma~\ref{lem_tessellating}, Case (b)(i).}
\label{fig_casebi}
\end{figure}
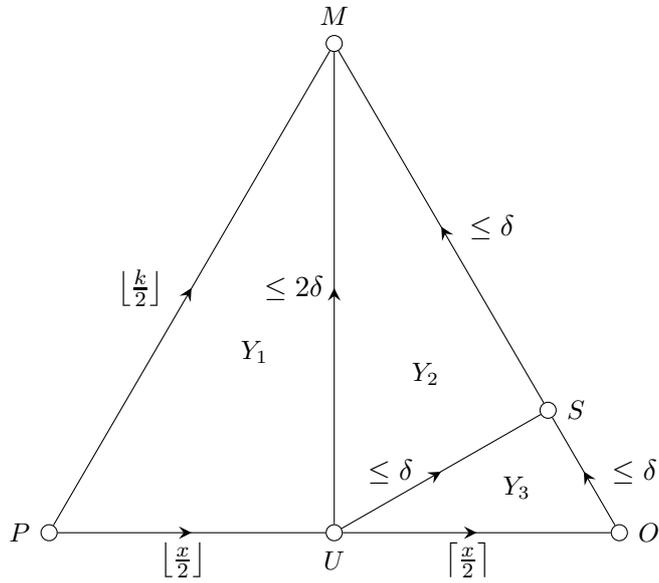

\

\noindent \textbf{\boldmath Case (b)(ii): $\inball_\delta(U)$ intersects $PM$.}
In this case choose $S$ in $\inball_\delta(U)$ which lies on $PM$. Choose geodesics $PU$, $UO$, $SU$, $SO$,
$PS$ and $SM$ so that $PU \circ UO = PO$ and $PS \circ SM = PM$.
Now the directed geodesic triangle $POM$ is tessellated by the three triangles $Y_1 = PSU$, $Y_2=SUO$ and $Y_3=SOM$ (see Figure~\ref{fig_casebii}) and the sizes of these three triangles are bounded as follows: 
\begin{eqnarray*}
\Sigma(Y_1) & \leq & \bott{\frac{k}{2}} + \delta \leq \frac{1}{2} \Sigma(T) + \delta, \\
\Sigma(Y_2) & \leq & \delta + \topp{\frac{x}{2}} \leq \delta + \topp{\frac{k+l}{2}} 
 \leq  \frac{1}{2}\Sigma(T) + \delta + 1, \\
\Sigma(Y_3) & \leq & \topp{\frac{x}{2}} + 2\delta \leq \topp{\frac{k+l}{2}} + 2\delta 
 \leq  \frac{1}{2}\Sigma(T) + 2\delta + 1.
\end{eqnarray*}
Thus, in both case (b)(i) and case (b)(ii), our triangle is tessellated by the triangles
$T_2$, $T_3$, $Y_1$, $Y_2$ and $Y_3$ which satisfy the required size bound.
\end{proof}

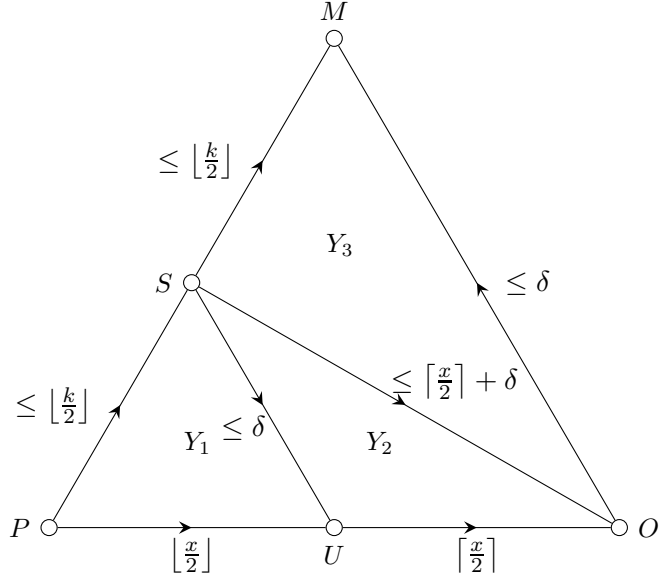
\begin{figure}
\def\x{0.5}
\begin{center}
\begin{tikzpicture}[scale=0.75, decoration={ 
markings,
mark=
at position \x
with 
{ 
\arrow[scale=1.5]{stealth} 
} 
} 
]
\tikzstyle{vertex}=[circle,draw=black, fill=white, inner sep = 0.75mm]
\node (P)  [vertex,label={180:{\small $P$}}] at (0,0) {};
\node (M)  [vertex,label={90:{\small $M$}}] at (5,8.65) {};
\node (O)  [vertex,label={360:{\small $O$}}] at (10,0) {};
\node (S)  [vertex,label={180:{\small $S$}}] at (2.5,4.33) {};
\node (U)  [vertex,label={270:{\small $U$}}] at (5,0) {};
\node (Y_1)  [label={360:{\small $Y_1$}}] at (2,1.5) {};
\node (Y_2)  [label={360:{\small $Y_2$}}] at (5.2,1.5) {};
\node (Y_3)  [label={360:{\small $Y_3$}}] at (4.5,5) {};
\draw [postaction={decorate}] (P)--(S) node[pos=0.5,left] {$\leq \bott{\frac{k}{2}}$ \; };
\draw [postaction={decorate}] (S)--(M) node[pos=0.5,left] {$\leq \bott{\frac{k}{2}}$ \; };
\draw [postaction={decorate}] (S)--(U) node[pos=0.6,left] {$\leq \delta$ };
\draw [postaction={decorate}] (S)--(O) node[pos=0.4,right] {$\; \; \leq \topp{\frac{x}{2}} + \delta$};
\draw [postaction={decorate}] (O)--(M) node[pos=0.5,right] {\; $\leq \delta$};
\draw [postaction={decorate}] (P)--(U) node[pos=0.5,below] {$\bott{\frac{x}{2}}$};
\draw [postaction={decorate}] (U)--(O) node[pos=0.5,below] {$\topp{\frac{x}{2}}$};
\end{tikzpicture}
\end{center}
\caption{Proof of Lemma~\ref{lem_tessellating}, Case (b)(ii).}
\label{fig_casebii}
\end{figure}

\begin{theorem}\label{thm_polytessellate}
Let $\Gamma$ be a strongly $\delta$-hyperbolic directed graph and $C > 8 \delta + 4$
a constant. Then every directed geodesic triangle $T$ in $\Gamma$ can be tessellated
by
$$5 \left(\frac{\Sigma(T)}{C - 8 \delta - 4} \right)^{\log_\frac{4}{3} 5}$$
or fewer geodesic triangles of size $C$ or less.
\end{theorem}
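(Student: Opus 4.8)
The plan is to iterate Lemma~\ref{lem_tessellating} and analyse the resulting recursion tree. Starting from a directed geodesic triangle $T$, Lemma~\ref{lem_tessellating} replaces it by at most five geodesic triangles each of size at most $\tfrac34 \Sigma(T) + 2\delta + 1$. Applying the lemma again to each of these, and so on, produces a rooted tree of triangles of branching at most $5$; after $m$ levels of subdivision we obtain at most $5^m$ triangles, and the crucial point is to control their sizes. Write $a = \tfrac34$ and $b = 2\delta+1$, so one subdivision step sends size $s$ to at most $as + b$. Iterating, a triangle at depth $m$ has size at most
\[
a^m \Sigma(T) + b(a^{m-1} + a^{m-2} + \dots + 1) \le a^m \Sigma(T) + \frac{b}{1-a} = a^m \Sigma(T) + 4(2\delta+1) = a^m \Sigma(T) + 8\delta + 4.
\]
So once $a^m \Sigma(T) \le C - 8\delta - 4$, every triangle at depth $m$ has size at most $C$ and we stop subdividing that branch. (Here the hypothesis $C > 8\delta+4$ is exactly what makes $C - 8\delta - 4$ positive, so this threshold is meaningful; the subdivided triangles in Lemma~\ref{lem_tessellating} may be trivial one-vertex triangles, which have size $0 \le C$, so nothing goes wrong there.)

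The first step is therefore to choose the stopping depth: let $m$ be the least non-negative integer with $a^m \Sigma(T) \le C - 8\delta - 4$, equivalently $\left(\tfrac43\right)^m \ge \Sigma(T)/(C-8\delta-4)$. If $\Sigma(T) \le C - 8\delta - 4$ already then $m = 0$, the triangle $T$ itself has size at most $C$ (since $C - 8\delta - 4 < C$), and $5^0 = 1$ triangle suffices, which is within the claimed bound since the bracketed quantity is at most $1$. Otherwise $m \ge 1$, and by minimality $\left(\tfrac43\right)^{m-1} < \Sigma(T)/(C-8\delta-4)$, hence
\[
5^m = 5 \cdot 5^{m-1} = 5 \left(\left(\tfrac43\right)^{m-1}\right)^{\log_{4/3} 5} < 5 \left(\frac{\Sigma(T)}{C - 8\delta - 4}\right)^{\log_{4/3} 5},
\]
using $5^{m-1} = \left(\left(\tfrac43\right)^{m-1}\right)^{\log_{4/3}5}$. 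Since the number of triangles produced is at most $5^m$, this gives the stated bound.

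The remaining step — and the only place where a little care is needed — is to verify that the tessellation property is genuinely preserved under iteration, i.e. that if a collection of triangles tessellates $T$ and each of them is in turn tessellated by a sub-collection, then the union of the sub-collections tessellates $T$. This follows from the definition of tessellation via the $2$-complex $K$ obtained by adjoining $2$-cells: adjoining a $2$-cell for a triangle $(p,q,r)$ makes $p\circ q$ homotopic to $r$, and homotopy is transitive and compatible with composition of $1$-paths, so a $2$-cell for $T$ can be simulated by the $2$-cells of any tessellation of $T$; chaining this through the finitely many levels of the recursion shows the leaves of the tree tessellate $T$. I expect this bookkeeping about composing homotopies through the recursion tree to be the main (though not deep) obstacle; the size recursion and the counting argument above are then routine.
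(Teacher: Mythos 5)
Your proof is correct and follows essentially the same strategy as the paper: iterate Lemma~\ref{lem_tessellating}, solve the resulting affine recursion for triangle sizes to get the threshold $C-8\delta-4$, choose the least depth $m$ at which all sizes drop below $C$, and bound the number of leaves by $5^m$. Two small remarks. First, you are right that the paper leaves implicit the fact that tessellations compose (if $T$ is tessellated by the $T_i$ and each $T_i$ is tessellated by a family of smaller triangles, then $T$ is tessellated by the union); your justification via transitivity and composability of homotopy of $1$-paths is exactly what is needed, and it is a useful clarification. Second, your treatment of the base case $m=0$ contains a slip: from ``the bracketed quantity is at most $1$'' it does \emph{not} follow that $5\bigl(\Sigma(T)/(C-8\delta-4)\bigr)^{\log_{4/3}5}\geq 1$ — for $\Sigma(T)/(C-8\delta-4)<3/4$ this expression drops below $1$, so the displayed formula cannot literally bound the one triangle you use. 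This is a shared blemish: the paper's choice of $N$ as the integer part of $\log_{4/3}\frac{t_0}{D}+1$ also breaks down when $t_0<D$ (it can make $N\leq 0$), and the paper passes over it in silence. It is harmless for the intended asymptotic use of the theorem, but if you want to be scrupulous you should state the bound as $\max\bigl(1,\,5(\Sigma(T)/(C-8\delta-4))^{\log_{4/3}5}\bigr)$ or simply note that for $\Sigma(T)\leq C-8\delta-4$ the single triangle $T$ suffices.
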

\begin{proof}
Let $T$ be a directed geodesic triangle in $\Gamma$. Then by
Lemma~\ref{lem_tessellating}, $T$ is tessellated by $T_1, \ldots, T_5$
where for all $i$
\[
\Sigma(T_i) \leq \frac{3}{4} \Sigma(T) + (2\delta + 1). 
\]
We iterate this procedure, at each stage subdividing every triangle from
the previous stage into five triangles in this way. Define a sequence $t_0$
of natural numbers by $t_0 = \Sigma(T)$ and
\[
t_{i+1} = \frac{3}{4} t_i + (2\delta + 1).
\]
for $i \leq n$.
A simple induction argument applying Lemma~\ref{lem_tessellating} shows that
$t_i$ is an upper bound on the size of the triangles obtained in the $i$th
iteration.

Now for each $k \in \mathbb{N}$ we have
\[
t_k \ = \ \left( \frac{3}{4} \right)^k t_0 + \left( \sum_{i=0}^{k-1} \left( \frac{3}{4} \right)^i  \right)
(2\delta + 1)
\ \leq \ 
\left( \frac{3}{4} \right)^k t_0 + 4 (2\delta + 1). 
\]
Let $D = C - 8 \delta - 4$ and let $N$ be the integer part of $\log_{4/3} \frac{t_0}{D} + 1$.
Then rearranging we have
$$N \geq \log_\frac{4}{3} \frac{t_0}{D}, \textrm{ so } 
\left( \frac{4}{3} \right)^N \geq \frac{t_0}{D}, \textrm{ so }
\left( \frac{3}{4} \right)^N t_0 \leq D = C - 8 \delta - 4$$
$$\textrm{ and hence } t_N \leq \left( \frac{3}{4} \right)^N t_0 + 8 \delta + 4 \leq C.$$
Thus, after $N$ iterations, we have tessellated $T$ with triangles of size
$C$ or less. Moreover, since at each stage we subdivide each triangle
into at most five triangles, the number of triangles in this tessellation is bounded
above by $5^N$, where
\begin{align*}
5^N &\leq 5^{ \left( \log_\frac{4}{3} \frac{t_0}{D} \right) + 1} 
= 5 \times 5^{\log_\frac{4}{3} \frac{t_0}{D}}
= 5 \times \left( \frac{t_0}{D} \right)^{\log_\frac{4}{3} 5} \\
&\leq 5 \left( \frac{\Sigma(T)}{C - 8 \delta - 4} \right) ^{\log_\frac{4}{3} 5}
\end{align*}
as required.
\end{proof}

Our main aim with Theorem~\ref{thm_polytessellate} was to give a reasonably
concise argument for the existence of a polynomial bound on the number of
triangles of fixed size required to tessellate a geodesic triangle, rather
than to optimise the degree of the polynomial.
The figure of $\log_{4/3} 5$ (which is approximately 5.6)
can probably be lowered significantly at the expense of lengthening the
proof, either by analysing more precisely the properties of the subdivision
given by Lemma~\ref{lem_tessellating}, or by considering alternative subdivisions.

Given two functions $f, g : \mathbb{N}\to \mathbb{N}$ we write $f \prec g$ if there exists
a constant $a$ such that $f(j) \leq ag(aj) + aj$ for all $j$. The functions
$f$ and $g$ are said to be of the same \textit{type}, written $f \sim g$, if $f \prec g$ and
$g \prec f$.

Now fix a monoid presentation $\langle A \mid R \rangle$.
If $u$ and $v$ are equivalent words then the \textit{area} $A(u,v)$ is
the smallest number of applications of relations from $R$ necessary to
transform $u$ into $v$.
The \textit{Dehn function} of a presentation $\langle A \mid R \rangle$ is
the function
$\delta : \mathbb{N} \to \mathbb{N}$ given by
$$\delta(n) = \max \lbrace A(u,v) \mid u,v \in A^*, u \equiv_R v, |u|+|v| \leq n \rbrace.$$
The Dehn function is a measure of the complexity of transformations between
equivalent words. 
The Dehn function depends on the presentation, but if $\delta$ and $\gamma$
are Dehn functions of different finite presentations for the same monoid
then $\delta \sim \gamma$ (see \cite{MadlenerOtto85, Pride95}).

A corollary of the above result is the following.

\begin{theorem}
\label{thm_DehnFnBound}
Let $M$ be a strongly $\delta$-hyperbolic left cancellative monoid. Then $M$ is finitely
presented with Dehn function bounded above by a polynomial of degree
$\log_\frac{4}{3} 5 + 1$.
\end{theorem}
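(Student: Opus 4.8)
The plan is to combine the geometric tessellation result of Theorem~\ref{thm_polytessellate} with the left cancellativity hypothesis to extract a finite presentation and a polynomial bound on its Dehn function. First I would fix a finite generating set $S$ with respect to which $M$ is strongly $\delta$-hyperbolic, and pass to the Cayley graph $\Gamma$. The key structural observation I would exploit is that left cancellativity means that a directed path in $\Gamma$ read from the identity vertex is determined by the word it spells, and conversely each word spells a unique path; more importantly, if two parallel paths $p \parallel q$ based anywhere are homotopic via a tessellation by geodesic triangles, then ``translating'' that tessellation back to the identity (i.e. left-multiplying) turns each geodesic triangle into a relation of bounded length, because left cancellativity lets us read off, for each triangle $(p',q',r')$ with $|p'|+|q'| \le C$, a defining relation $u = v$ with $|u| \le 2C$ and $|v| \le C$ relating the two sides. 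I would take $R$ to be the (finite, since $\Gamma$ is locally finite and triangles have bounded size $C$) set of all such relations coming from geodesic triangles of size at most some fixed $C > 8\delta + 4$, together with relations identifying any two geodesic words between the same pair of vertices at bounded distance.

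Next I would argue that $\langle S \mid R \rangle$ presents $M$. Given two words $u, v$ with $\bar u = \bar v$ in $M$, they label parallel paths $p, q$ from the identity vertex. By Lemma~\ref{lem_FillingFish} these can be tessellated by finitely many geodesic triangles, and then by Theorem~\ref{thm_polytessellate} each of those can in turn be tessellated by at most $5(\Sigma(T)/(C-8\delta-4))^{\log_{4/3}5}$ geodesic triangles of size $\le C$. Each such small triangle corresponds, after translating to the identity (using left cancellativity to make sense of the ``prefix'' on which the triangle sits), to a single application of a relation in $R$, or a bounded number of them. Chaining the homotopies, one rewrites $u$ to $v$ using only relations from $R$, so $R$ defines $M$; finiteness of $R$ gives finite presentability. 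The Dehn function bound then follows by counting: the triangles from Lemma~\ref{lem_FillingFish} number $|p|+|q|+1 \le n+1$ and each has size at most $2(|p|+|q|) \le 2n$, so each contributes at most $5(2n/(C-8\delta-4))^{\log_{4/3}5}$ relation applications; the total is $O(n \cdot n^{\log_{4/3}5}) = O(n^{\log_{4/3}5 + 1})$, which is the claimed degree.

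The technical care needed — and the step I expect to be the main obstacle — is making the ``translate the tessellation to the identity vertex'' step precise, because the tessellation of Lemma~\ref{lem_FillingFish} and Theorem~\ref{thm_polytessellate} is phrased in terms of abstract $2$-complexes and homotopy of paths in the Cayley graph, whereas a Dehn-function argument needs an honest sequence of relation applications transforming the \emph{word} $u$ into the \emph{word} $v$. The bridge is left cancellativity: a geodesic triangle sitting at a vertex $g \in M$, with sides reading words $a$, $b$, $c$ where $\overline{ga} \cdot \overline{b}$ and $\overline{g}\cdot\overline{c}$ coincide, yields the relation $ab = c$ (independent of $g$, precisely because $g$ can be left-cancelled), so a single $2$-cell really is a single relation. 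One must also check that the empty/trivial triangles allowed in Lemma~\ref{lem_tessellating} cause no problem (they correspond to the trivial relation), and that the relations identifying distinct geodesics between bounded-distance vertex pairs are needed to close up the argument and are themselves finite in number by local finiteness. Finally I would remark, as the paper's philosophy suggests, that Birget's theorem \cite{Birget98} then immediately yields that the word problem of $M$ lies in $\mathcal{NP}$.
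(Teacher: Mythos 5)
Your proposal takes essentially the same approach as the paper: both reduce the Dehn function bound to the polynomial tessellation count obtained by chaining Lemma~\ref{lem_FillingFish} with Theorem~\ref{thm_polytessellate}, and both use left cancellativity to turn bounded-size geodesic triangles in the Cayley graph into length-bounded defining relations. The paper delegates the final bridge (from simple connectivity of $K_C(\Gamma)$ with a polynomial isoperimetric bound to a finite presentation with polynomial Dehn function) to the directed-$2$-complex machinery of \cite{K_qsifp}, whereas you sketch that translation-to-identity step directly; the underlying argument and the counting are the same.
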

\begin{proof}
Let $\Gamma$ be the Cayley graph of $M$, with respect to a generating set which
makes $M$ strongly $\delta$-hyperbolic. Choose an integer
$C > 8 \delta + 4$.

Suppose $(p, q)$ is a pair of parallel paths in $\Gamma$. Then by
Lemma~\ref{lem_FillingFish}, $(p,q)$ can be tessellated by at most
$|p|+|q|+1$ geodesic triangles of size at most $2(|p|+|q|)$. By
Theorem~\ref{thm_polytessellate}, each of these may be tessellated
by at most
$$5 \left(\frac{\Sigma(T)}{C - 8 \delta - 4} \right)^{\log_\frac{4}{3} 5}$$
geodesic triangles of size at most $C$. Thus, $(p,q)$ can be tessellated by at
$$5 (|p|+|q|+1) \left(\frac{2(|p|+|q|)}{C - 8 \delta - 4} \right)^{\log_\frac{4}{3} 5}$$
directed geodesic triangles of size at most $C$.

Each such triangle will correspond to a face in $K_C(\Gamma)$, so this shows
that $K_C(\Gamma)$ is simply connected with Dehn function bounded above by
a polynomial of degree $\log_\frac{4}{3} 5 + 1$.
It follows by the results of \cite{K_qsifp}
 that $M$ is finitely presented with Dehn function
bounded above by a polynomial of this degree.
\end{proof}

\begin{theorem}
\label{thm_polytimebound}
Let $M$ be a finitely generated, left cancellative, strongly $\delta$-hyperbolic monoid.
Then the word problem for $M$ lies in $\mathcal{NP}$.
\end{theorem}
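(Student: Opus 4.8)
The plan is to deduce this immediately from Theorem~\ref{thm_DehnFnBound} together with the standard fact that a finitely presented monoid with polynomial Dehn function has word problem in $\mathcal{NP}$ (this is, in effect, a special case of the easy direction of Birget's characterisation \cite{Birget98}). Fix the finite presentation $\langle A \mid R \rangle$ of $M$ supplied by Theorem~\ref{thm_DehnFnBound}, let $\delta$ be its Dehn function and $P$ a polynomial with $\delta(n) \leq P(n)$ for all $n$, and write $c$ for the maximum length of a side of a defining relation in $R$.

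First I would describe the non-deterministic algorithm. On input a pair of words $u, v \in A^*$, put $n = |u| + |v|$. If $u \equiv_R v$ then, by the definition of the Dehn function, there is a sequence of elementary $R$-moves $u = w_0 \to w_1 \to \cdots \to w_m = v$ with $m = A(u,v) \leq \delta(n) \leq P(n)$. Since a single elementary move replaces one side of a defining relation by the other, $\bigl| |w_{i+1}| - |w_i| \bigr| \leq c$, and hence $|w_i| \leq n + c\,P(n)$ for every $i$; thus the whole derivation can be encoded (for instance as the list, for each step, of the position at which a relation is applied and which relation is used) by a string of length polynomial in $n$. The algorithm guesses such an encoding and then checks, in deterministic polynomial time, that it describes a valid $R$-derivation starting at $u$ and ending at $v$, accepting if and only if it does. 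For a ``yes'' instance some guess is accepted, while for a ``no'' instance no guess can be, so the word problem for $M$ lies in $\mathcal{NP}$.

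The only point that requires care is precisely the polynomial bound on the lengths of the intermediate words $w_i$, which is what guarantees that the certificate has polynomial size; as indicated, this is immediate from the constant bound $c$ on the change in length effected by one relator application, so I do not anticipate any real difficulty. Alternatively the statement follows with essentially no work by quoting Birget's theorem \cite{Birget98} directly: a finitely generated semigroup has word problem in $\mathcal{NP}$ exactly when it embeds in a finitely presented semigroup with polynomial Dehn function, and by Theorem~\ref{thm_DehnFnBound} the monoid $M$ is itself such a semigroup (trivially embedded in itself).
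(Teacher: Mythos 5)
Your proposal is correct and takes essentially the same route as the paper: invoke Theorem~\ref{thm_DehnFnBound} to get a finite presentation with polynomial Dehn function, then non-deterministically guess a derivation of polynomial length. The paper's proof is a terser version of your argument (it does not explicitly spell out the polynomial bound on the lengths of the intermediate words, which you rightly flag as the point requiring a moment's care).
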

\begin{proof}
By Theorem~\ref{thm_DehnFnBound} $M$ is finitely presented with polynomial
Dehn function. Let $\langle A \mid R \rangle$ be a finite presentation, and
$p : \mathbb{N} \to \mathbb{N}$ a polynomial upper bound on the corresponding Dehn function. Now given
words $u, v \in A^*$, one may check non-deterministically if $u = v$ in $M$
by guessing a sequence of relation applications of length $p(|u| + |v|)$ which can
be applied to $u$, and seeing if the result of applying them is $v$.
\end{proof}

\section{Deciding Green's Relations}\label{sec_greens}
The statements of our main results for monoids so far have been direct
analogues of known results in the group case, although the proofs have
been rather more involved. If geometric techniques are to have more than
a very limited application in semigroup theory, it is important that they
give insight into aspects of the structure theory of
semigroups which do not arise in groups such as, for example, the ideal
structure of a semigroup.
Recall that \textit{Green's relations} are a collection of equivalence
relations and pre-orders (reflexive, transitive binary relations) defined
on any monoid (or semigroup) which encapsulate the structure of its principle
left, right and two-sided ideals and maximal subgroups. They are a key
tool in modern semigroup theory, playing a pivotal role in almost every
area of the subject.

If $S$ is a monoid then we define pre-orders $\leq_{\gr}$, $\leq_{\gl}$,
$\leq_{\gj}$ and equivalence relations $\gr$, $\gl$, $\gj$, $\gh$ and $\gd$
by
\begin{itemize}
\item $a \leq_{\gr} b$ if and only if $aS \subseteq bS$;
\item $a \leq_{\gl} b$ if and only if $aS \subseteq bS$;
\item $a \leq_{\gj} b$ if and only if $SaS \subseteq SbS$;
\item $a \gr b$ if and only if $aS = bS$ (that is, if $a \leq_\gr b$ and $b \leq_\gr a)$;
\item $a \gl b$ if and only if $Sa = Sb$ (that is, if $a \leq_\gl b$ and $b \leq_\gl a)$;
\item $a \gj b$ if and only if $SaS = SbS$ (that is, if $a \leq_\gj b$ and $b \leq_\gj a)$;
\item $a \gh b$ if and only if $a \gr b$ and $a \gl b$; and
\item $a \gd b$ if and only if there exists $c \in S$ with $a \gl c$ and $c \gr b$.
\end{itemize}
A monoid is called $\mathscr{J}$-trivial if the $\gj$ relation (and hence
also the $\gd$, $\gl$, $\gr$ and $\gh$ relations) are the identity relation.

\subsection*{\boldmath Green's relations $\gr$ and $\gl$}

We shall now see how the triangle quasi-inequality for locally finite
strongly $\delta$-hyperbolic directed graphs, and more generally the
polygon quasi-inequality for strongly $\delta$-hyperbolic locally finite
directed graphs, can be usefully applied to prove decidability results
for Green's relations in strongly $\delta$-hyperbolic monoids. 

The questions of decidability, and the complexity of deciding, Green's relations have been considered
for semigroups defined by finite complete rewriting systems \cite{Otto84},
automatic monoids \cite{Otto07},  
word-hyperbolic semigroups \cite{CainArXiV}, and 
for the Thompson-Higman monoids 
\cite{Birget2010, Birget2011}.
In particular,  
it was shown in \cite{Otto84} that there exists monoids that are presented
by finite, length-reducing, and confluent string-rewriting systems (and
therefore in particular have solvable word problem) but have Green's
relations $\gr$ and $\gl$ that are undecidable. 
Also, in \cite{Otto07} examples are given of finitely generated monoids $M$ with word problem solvable in quadratic time but such that $\gr$ (respectively $\gl$) is undecidable. 
For strongly $\delta$-hyperbolic
monoids, the quasi-triangle inequality prevents this from happening.

Recall that a monoid $M$ generated by a finite subset $A$ has \textit{(right) indegree
bounded by a natural number $\alpha$} if one cannot choose generator $a \in A$ and
$\alpha + 1$ distinct elements $x_0, \dots, x_\alpha \in M$ such that
$x_0 a = x_1 a = \dots = x_\alpha a$. It is easily seen that the property of having
bounded indegree is independent of the choice of finite generating set, although the actual
bound may vary. Having bounded indegree is also equivalent to saying that the right
Cayley graph of $M$ (with respect to any or every choice of finite generating set) has
bounded valency. Notice that a right cancellative finitely generated monoid
always has bounded indegree, and indeed bounded indegree is often viewed
as a weak right cancellativity condition.

\begin{theorem}
\label{thm_GreensRRelation}
Let $M$ be a finitely generated strongly $\delta$-hyperbolic monoid with bounded
indegree. Then the problems of deciding the $\gr$-order $\leq_{\gr}$ and
$\gl$-order $\leq_{\gl}$ are reducible in non-deterministic linear time to
the word problem for $M$.
\end{theorem}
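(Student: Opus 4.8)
The plan is to reduce each query to the existence of a \emph{short} witnessing word and then guess it: if $a \leq_{\gr} b$ or $a \leq_{\gl} b$, then the relation is witnessed by a word whose length is bounded linearly in the lengths of the two input words, and the quasi-triangle inequality (Lemma~\ref{lemma_quasitriangle}) is precisely what produces this linear bound. So fix a finite generating set $A$ with respect to which $M$ is strongly $\delta$-hyperbolic, let $\Gamma$ be the corresponding right Cayley graph, and write $1$ for the identity vertex. Every vertex of $\Gamma$ has outdegree at most $|A|$ and, by hypothesis, indegree at most some constant; hence $\Gamma$ is locally finite with in- and outdegree bounded by a constant $\alpha$, and Lemma~\ref{lemma_quasitriangle} furnishes a constant $\lambda$, depending only on $\delta$ and $\alpha$, such that any side of a directed geodesic triangle in $\Gamma$ has length at most $\lambda$ times the sum of the other two. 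Given input words $u, v \in A^*$, write $a, b$ for the elements of $M$ they represent; recall $a \leq_{\gr} b$ iff $a \in bM$, equivalently iff there is a directed path in $\Gamma$ from $b$ to $a$, and $a \leq_{\gl} b$ iff $a \in Mb$, equivalently iff $tb = a$ for some $t \in M$.

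For $\leq_{\gr}$: if $a \leq_{\gr} b$, pick a geodesic $p$ from $1$ to $b$, a geodesic $q$ from $b$ to $a$, and a geodesic $r$ from $1$ to $a$. Then $p$ and $q$ are composable and $p \circ q \parallel r$, so $(p,q,r)$ is a directed geodesic triangle, and Lemma~\ref{lemma_quasitriangle} gives $|q| \leq \lambda(|p| + |r|) = \lambda(d(1,b) + d(1,a)) \leq \lambda(|v| + |u|)$. The label $w$ of $q$ therefore satisfies $|w| \leq \lambda(|u|+|v|)$ and $vw =_M u$; conversely any word $w$ with $vw =_M u$ witnesses $a \leq_{\gr} b$. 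Thus the nondeterministic procedure which, on input $(u,v)$, guesses a word $w \in A^*$ of length at most $\lambda(|u|+|v|)$ and then queries a word-problem oracle whether $vw =_M u$, decides $\leq_{\gr}$; it runs in nondeterministic linear time and makes a single oracle call on an input of linear size.

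For $\leq_{\gl}$, which a priori concerns the left Cayley graph, we argue entirely inside $\Gamma$. Suppose $tb = a$ with $t \in M$. Pick a geodesic $p_1$ from $1$ to $t$, a geodesic $p_2$ from $t$ to $a$, and a geodesic $r$ from $1$ to $a$. Reading any word representing $b$ as a path of $\Gamma$ starting at $t$ ends at $tb = a$, so $d(t,a) \leq d(1,b) \leq |v|$; also $d(1,a) \leq |u|$. Since $(p_1, p_2, r)$ is a directed geodesic triangle, Lemma~\ref{lemma_quasitriangle} gives $|p_1| \leq \lambda(|p_2| + |r|) \leq \lambda(|v| + |u|)$. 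The label $w$ of $p_1$ represents $t$, so $|w| \leq \lambda(|u|+|v|)$ and $wv =_M tb =_M u$; conversely any $w$ with $wv =_M u$ witnesses $a \leq_{\gl} b$, the witness being the element represented by $w$. Hence the analogous nondeterministic procedure — guess $w$ of length at most $\lambda(|u|+|v|)$, query whether $wv =_M u$ — decides $\leq_{\gl}$ within the same resources.

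The only substantive step is passing from ``there is a connecting word'' to ``there is a connecting word of linearly bounded length''; for $\leq_{\gr}$ this is an immediate application of the quasi-triangle inequality, while for $\leq_{\gl}$ the extra wrinkle is that the connecting word sits on the ``wrong'' side of the multiplication, so one must instead bound the length of a witness $t$ by placing it in the geodesic triangle $1 \to t \to a$ with hypotenuse $1 \to a$ and observing that the side $t \to a$ is automatically short, being realised by any representative of $b$. Everything else is routine, and the local finiteness and bounded-indegree hypotheses enter only through the appeal to Lemma~\ref{lemma_quasitriangle}.
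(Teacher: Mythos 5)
Your proof is correct and takes essentially the same approach as the paper: for $\leq_{\mathcal{R}}$ you bound the length of a geodesic witness via the triangle quasi-inequality and then guess it, exactly as the paper does (though the paper cites Theorem~\ref{thm_poly}, which for triangles is just Lemma~\ref{lemma_quasitriangle}). The paper dismisses the $\leq_{\mathcal{L}}$ case with ``entirely similar''; you have usefully spelled out the genuine wrinkle there — the witness $t$ sits on the left, so one bounds the side $1 \to t$ of the triangle $1 \to t \to a$, observing that $d(t,a) \leq d(1,b)$ because a word for $b$ transports $t$ to $a$ — and your account is a faithful and slightly more careful rendering of what the paper intends.
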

\begin{proof}
Let $A$ be a finite generating set with respect to which $M$ is strongly
$\delta$-hyperbolic.
Let $w, u \in A^*$, and suppose $\alpha \in A^*$ is of minimal length such that
$w \alpha = u$ in $S$.
Let $w'$ and $u'$ be a geodesic words representing the same elements
as $w$ and $u$ respectively. Then $(w',\alpha,u')$ labels a geodesic triangle
in the Cayley graph of $M$ with respect to $A$, so by
Theorem~\ref{thm_poly}, there is constant $K$, depending only
on $\delta$, the maximum indegree of $\Gamma$ and $|A|$, such that
$$|\alpha| \leq K(|w'| + |u'|) \leq K (|w| + |u|).$$

Thus, given $w,u \in A^*$, to test non-deterministically if $u \leq_\gr w$,
it suffices to guess a word $\alpha \in A^*$ of length at most $K(|w|+|u|)$,
and then test if $w \alpha = u$.

The proof for $\leq_\gl$ is entirely similar.
\end{proof}

Note that in Theorem~\ref{thm_GreensRRelation} we do not require the monoid
to be left cancellative, although we still have a weak \textit{right}
cancellativity assumption in the form of the bounded indegree hypothesis.
Combining Theorem~\ref{thm_GreensRRelation} with Theorem~\ref{thm_DehnFnBound}
we obtain the following.

\begin{theorem}
\label{thm_RisDecidable}
Let $M$ be a finitely generated left cancellative monoid of bounded indegree
which is strongly hyperbolic. Then the $\mathcal{L}$-order and
$\mathcal{R}$-order for $M$ are both in $\mathcal{NP}$.
\end{theorem}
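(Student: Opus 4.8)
The plan is to combine the two main results already established for strongly hyperbolic left cancellative monoids of bounded indegree. By Theorem~\ref{thm_DehnFnBound}, such a monoid $M$ is finitely presented with polynomial Dehn function, and hence by Theorem~\ref{thm_polytimebound} the word problem for $M$ lies in $\mathcal{NP}$. On the other hand, Theorem~\ref{thm_GreensRRelation} shows that, since $M$ has bounded indegree, the problems of deciding $\leq_{\gr}$ and $\leq_{\gl}$ are reducible in non-deterministic linear time to the word problem. I would therefore simply observe that composing a non-deterministic linear-time reduction with a non-deterministic polynomial-time algorithm yields a non-deterministic polynomial-time algorithm, so both the $\gr$-order and the $\gl$-order lie in $\mathcal{NP}$.

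In more detail: first I would fix a finite generating set $A$ with respect to which $M$ is strongly $\delta$-hyperbolic for some $\delta$. Applying Theorem~\ref{thm_GreensRRelation}, there is a constant $K$ (depending only on $\delta$, the maximum indegree, and $|A|$) such that to test whether $u \leq_{\gr} w$ it suffices to guess a word $\alpha \in A^*$ with $|\alpha| \leq K(|w|+|u|)$ and then verify that $w\alpha = u$ in $M$. Next I would invoke Theorem~\ref{thm_polytimebound}: the word problem for $M$ is in $\mathcal{NP}$, so the check $w\alpha = u$ can itself be performed by a non-deterministic polynomial-time procedure, say one guessing and verifying a sequence of relation applications of length $p(|w\alpha| + |u|)$ for a fixed polynomial $p$. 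Finally I would note that $|w\alpha| + |u| \leq (K+1)(|w|+|u|) + |u|$ is linear in the input size, so $p(|w\alpha|+|u|)$ is polynomial in $|w|+|u|$, and the whole procedure (guess $\alpha$, then guess the relation sequence) is a single non-deterministic polynomial-time algorithm deciding $u \leq_{\gr} w$. The argument for $\leq_{\gl}$ is identical.

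I do not anticipate any genuine obstacle here: the statement is essentially a formal corollary, and the only thing requiring a word of care is the book-keeping showing that the composition of the linear reduction with the polynomial-time word-problem algorithm remains polynomial — which is immediate since substituting a linear function into a polynomial gives a polynomial. One should perhaps remark explicitly that the hypothesis ``strongly hyperbolic'' (rather than ``strongly $\delta$-hyperbolic'' for a specified $\delta$) is harmless, since by definition it means $M$ is strongly $\delta$-hyperbolic for some $\delta$ with respect to some finite generating set, and all the constants $K$, $\delta$ and the polynomial $p$ are then fixed once that generating set is chosen.
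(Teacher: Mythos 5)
Your proposal is correct and matches the paper's approach exactly: the paper states the result, without a separate proof, as the combination of Theorem~\ref{thm_GreensRRelation} (the non-deterministic linear-time reduction of $\leq_{\gr}$ and $\leq_{\gl}$ to the word problem) with Theorem~\ref{thm_DehnFnBound} (polynomial Dehn function, whence $\mathcal{NP}$ word problem via Theorem~\ref{thm_polytimebound}). Your explicit book-keeping of the composition is the intended argument.
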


Neither the left cancellativity nor the strong hyperbolicity condition in
Theorem~\ref{thm_RisDecidable} can be dropped. In
Section~\ref{sec_noncanc} below we shall see examples of finitely generated,
strongly $0$-hyperbolic which have unsolvable word problems and all of Green's
equivalence relations trivial, and hence also unsolvable.
Also, as mentioned above, it is well known that there exist finitely presented groups with unsolvable word problem. 
Let $G$ be such a group given by a finite monoid presentation  $\langle A | R \rangle$, and define $M = \langle A, h | R \rangle$ where $h$ is a symbol not in $A$. Then $M$ is a two-sided cancellative monoid (since it is the monoid free product of $G$ and the free monoid of rank one, both of which are cancellative). Moreover, for all words $u,w \in A^*$ we have
\[
hu \gl hw \Leftrightarrow w = u \ \mbox{in $G$, which is undecidable}, 
\]  
\[
uh \gr wh \Leftrightarrow w = u \ \mbox{in $G$, which is undecidable},
\]  
and
\[
huh \gj hwh \Leftrightarrow huh \gd hwh \Leftrightarrow w = u \ \mbox{in $G$, which is undecidable}.
\]  
Thus, none of the relations $\gr$, $\gl$, $\gj$ or $\gd$ is decidable in $M$.

\subsection*{\boldmath Green's relations $\gj$ and $\gd$}

Next we look at the relations $\gj$ and $\gd$. The following technical
lemma will be used to study Green's $\gj$-relation. Intuitively speaking,
it says that, in a geodesic quadrangle $(p,q,r,s)$, if the side $r$ is
sufficiently long then there will be a short path from $p$ to $r$. This
will be using for carving up geodesic quadrangles into smaller geodesic
quadrangles.

\begin{lemma}
\label{lem_technicallem}
Let $\Gamma$ be a strongly $\delta$-hyperbolic, locally finite directed graph with indegree and outdegree
bounded by $\alpha$, and let $(p,q,r,s)$ be a geodesic quadrangle. Then there are
polynomial-time computable
constants
$C_{\delta, |s|}$, depending on $\delta$ and $|s|$, and
$D_{\alpha, \delta, |q|, |s|}$, depending on $\alpha$, $\delta$, $|q|$ and
$|s|$, such that if $|r| > D_{\alpha, \delta, |q|, |s|}$ then there is a
geodesic path $t$ in $\Gamma$ satisfying 
$\iota t \in p$, 
$\tau t \in r$, 
$d(\tau t, \tau s) = C_{\delta, |s|}$,
and $|t| \leq 2\delta$.

Moreover, if the graph $\Gamma$ is fixed then $C_{\delta, |s|}$ may be
chosen to increase monotonically with $|s|$, and the $D_{\alpha, \delta, |q|, |s|}$
may be chosen to increase monotonically with $|q|$ and $|s|$, and to be bounded above
by a linear function of $|q| + |s|$.
\end{lemma}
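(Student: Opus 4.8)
The plan is to split the geodesic quadrangle along a diagonal into two directed geodesic triangles, each having one short side ($q$, respectively $s$), and to use the $\delta$-thin condition on these two triangles to trap a long segment of the diagonal between the sides $p$ and $r$; the quasi-triangle inequality of Lemma~\ref{lemma_quasitriangle} is then used both to guarantee that this segment is long and to control where on $r$ the short path so produced lands.

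Write the corners of the quadrangle as $P_0 = \iota p = \iota s$, $P_1 = \tau p = \iota q$, $P_2 = \tau q = \iota r$ and $P_3 = \tau r = \tau s$, and choose a geodesic $e$ from $P_1$ to $P_3$. Then $(q,r,e)$ and $(p,e,s)$ are directed geodesic triangles, and applying Lemma~\ref{lemma_quasitriangle} to $(q,r,e)$ gives $|e| \geq |r|/\lambda - |q|$ for the constant $\lambda$ of that lemma (which depends only on $\alpha$ and $\delta$). Hence if $|r|$ exceeds a bound $D_{\alpha,\delta,|q|,|s|}$ that is linear in $|q|+|s|$, the length $N = |e|$ can be made as large as needed. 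Write $e = [e_0,\dots,e_N]$ with $e_0 = P_1$ and $e_N = P_3$, so that $d(P_1,e_i)=i$ and $d(e_i,P_3)=N-i$. The $\delta$-thin condition, applied to the side $e$ of $(q,r,e)$, gives $e_i \in \outball_\delta(q) \cup \inball_\delta(r)$, and $e_i \in \outball_\delta(q)$ would force $d(P_1,e_i)\leq|q|+\delta$; applied to the side $e$ of $(p,e,s)$ it gives $e_i \in \outball_\delta(p) \cup \inball_\delta(s)$, and $e_i \in \inball_\delta(s)$ would force $d(e_i,P_3)\leq|s|+\delta$. So for every $i$ with $|q|+\delta < i < N-|s|-\delta$ there are vertices $u_i$ on $p$ and $v_i$ on $r$ with $d(u_i,e_i)\leq\delta$ and $d(e_i,v_i)\leq\delta$; concatenating a geodesic from $u_i$ to $e_i$ with one from $e_i$ to $v_i$ is a path of length $\leq 2\delta$ from $u_i$ to $v_i$, so, taking $t$ to be a geodesic from $u_i$ to $v_i$, we obtain $\iota t = u_i \in p$, $\tau t = v_i \in r$ and $|t|\leq 2\delta$.

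It remains to choose $i$ so that $d(v_i,\tau s)=d(v_i,P_3)$ hits the prescribed value $C_{\delta,|s|}$. The triangle inequality $d(e_i,P_3)\leq d(e_i,v_i)+d(v_i,P_3)$ gives $d(v_i,P_3)\geq(N-i)-\delta$, while Lemma~\ref{lemma_quasitriangle}, applied to the directed geodesic triangle formed by a geodesic $e_i\to v_i$, the subpath of $r$ from $v_i$ to $P_3$, and the subpath of $e$ from $e_i$ to $P_3$, gives $d(v_i,P_3)\leq\lambda(\delta+(N-i))$. A further application of Lemma~\ref{lemma_quasitriangle}, to the triangle relating $v_i$, $v_{i+1}$ and $e_i$ (using $d(e_i,v_{i+1})\leq 1+\delta$), shows that $d(v_i,P_3)$ changes by at most a constant $\Delta$, depending only on $\lambda$ and $\delta$, when $i$ changes by one. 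Thus, as $i$ ranges over the admissible interval, $d(v_i,P_3)$ moves in steps of size at most $\Delta$ from a value at least $N-|q|-2\delta-1$ down to one bounded above in terms of $\alpha$, $\delta$ and $|s|$, and a discrete intermediate-value argument — together with a small amount of bookkeeping to arrange that the prescribed target value depends only on $\delta$ and $|s|$ — produces $i$ with $d(v_i,P_3)=C_{\delta,|s|}$. Monotonicity of $C_{\delta,|s|}$ in $|s|$, and monotonicity and linearity of $D_{\alpha,\delta,|q|,|s|}$ in $|q|$ and $|s|$, are visible in the explicit bounds.

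The main difficulty throughout is the directional asymmetry of the semimetric: one must carefully track which side of each auxiliary triangle the $\delta$-thin condition is being applied to, so that the two short paths genuinely compose into a path \emph{from $p$ to $r$} (and not two paths into a common vertex), and so that each auxiliary triangle used in the length estimates really is a directed geodesic triangle with orientations matching. The subtlest point is the last one: pinning the endpoint of $t$ to a vertex of $r$ at an exact prescribed distance from $\tau s$, which requires combining the quasi-triangle bounds on $d(v_i,P_3)$ and on its variation with the abundance of admissible indices $i$ guaranteed by the size of $|e|$.
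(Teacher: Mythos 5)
Your global strategy --- split the quadrangle along a diagonal, apply the thin condition to each of the two resulting triangles, and use the quasi-triangle inequality to guarantee the diagonal is long --- is similar in spirit to the paper's, but you choose the diagonal $e$ from $\tau p$ to $\tau s$ rather than the diagonal from $\iota s$ to $\tau q$ that the paper uses, and more importantly you then try to locate the endpoint on $r$ by varying a parameter, which introduces a genuine gap.

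The gap is in the final ``pinning'' step. You establish that for admissible $i$ the quantity $f(i) = d(v_i,\tau s)$ changes by at most a constant $\Delta = \lambda(2\delta+1)$ as $i$ increases by one, and that $f$ descends from a value growing with $N$ to one bounded in terms of $\alpha,\delta,|s|$. But the discrete intermediate-value argument you invoke only produces an index $i$ with $|f(i) - C_{\delta,|s|}| < \Delta$, not one with $f(i) = C_{\delta,|s|}$ exactly: a function with step size $\Delta > 1$ can skip over any prescribed integer value (e.g.\ the sequence $10,7,4,1$ with $\Delta=3$ never hits $5$). This cannot be repaired by ``bookkeeping'': you cannot slide from $v_i$ to the true target vertex $x\in r$ at distance exactly $C_{\delta,|s|}$ from $\tau s$ without paying an extra additive cost of up to $\Delta$ on $|t|$, which would destroy the required bound $|t|\le 2\delta$; nor can you let the constant $C$ adapt to the quadrangle, since the lemma insists it depend only on $\delta$ and $|s|$. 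The exact equality $d(\tau t,\tau s)=C_{\delta,|s|}$ is not cosmetic --- it is what makes the iteration in the proof of Lemma~\ref{lemma_DrelnThm} tile $b$ into pieces of prescribed lengths.

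The paper sidesteps this by reversing the order of construction: it first fixes the vertex $x\in r$ with $d(x,\tau s)=C_{\delta,|s|}=\delta+|s|+1$, then chases it back to a point on $p$. Concretely, it uses the other diagonal $u$ from $\iota s$ to $\tau q$; the thin condition for side $r$ of $(u,r,s)$ gives $x\in\outball_\delta(u)\cup\inball_\delta(s)$, the second option is excluded because $d(x,\tau s)>\delta+|s|$, so some $y\in u$ has $d(y,x)\le\delta$; the thin condition for side $u$ of $(p,q,u)$ gives $y\in\outball_\delta(p)\cup\inball_\delta(q)$, the second option is excluded by a polygon-inequality computation when $|r|>D$, so some $z\in p$ has $d(z,y)\le\delta$, giving the required $t$ from $z$ to $x$ of length at most $2\delta$. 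A version of this ``fix $x$ first'' argument can also be carried out with your diagonal $e$, but it requires ruling out $x\in\outball_\delta(q)$ via a separate quasi-triangle estimate (rather than the simple distance comparison with $|s|$ that the paper's diagonal affords); your submitted argument does not do this, and as written the exactness claim does not follow.
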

\begin{proof}
Let $K_{\alpha, \delta}$ be the constant, given by Theorem~\ref{thm_poly}, depending on $\alpha$ and $\delta$,
and having the property that for any geodesic quadrangle in $\Gamma$ each side has length bounded by
$K_{\alpha, \delta}$ times the sum of the length of the other three sides. Define
\begin{equation}
\label{eqn_C}
C_{\delta, |s|}
 = 
\delta + |s| + 1,
\end{equation}
and
\begin{equation}
\label{eqn_D}
D_{\alpha,\delta,|q|,|s|}  
=  
K_{\alpha, \delta} (K_{\alpha,\delta} (|s|+\delta + C_{\delta,|s|} ) + \delta + |q| + |s|). 
\end{equation}
Since $K_{\alpha, \delta}$ is polynomial-time computable, it is clear that
these values are also computable in polynomial time.
Moreover, if the graph (and hence $\alpha$ and $\delta$) remain fixed then
clearly the values increase monotonically with the remaining variables, and
$D_{\alpha,\delta,|q|,|s|}$ can be bounded above by a linear function of $|q| + |s|$.

Let $x \in r$ with $d(x,\tau s) = C_{\delta, |s|}$. 
Let $u$ be a geodesic path from $\iota s$ to $\tau q$, and consider the
directed geodesic triangle $(u,r,s)$. 
Since $\Gamma$ is strongly $\delta$-hyperbolic either $\outball_\delta(x) \cap s \neq \varnothing$ or
else $\inball_\delta(x) \cap u \neq \varnothing$. However, the first of these possibilities cannot arise since 
\[
d(x, \tau s) = C_{\delta,|s|} > \delta + |s|, 
\]
and thus we conclude that $\inball_\delta(x) \cap u \neq \varnothing$, so we may choose $y \in u$ with
$d(y,x) \leq \delta$. Now consider the directed geodesic triangle $(p,q,u)$ and the point $y \in u$. Since $\Gamma$ is strongly $\delta$-hyperbolic, either (i) $\outball_\delta(y) \cap q \neq \varnothing$ or (ii) $\inball_\delta(y) \cap p \neq \varnothing$. 

Suppose, seeking a contradiction, that (i) holds. Then $d(y,\tau q) \leq \delta + |q|$. Now consider a directed geodesic quadrangle formed
by geodesics from $\iota s$ to $y$, $y$ to $x$, $x$ to $\tau s$ and $\iota s$ to $\tau s$.
By Theorem~\ref{thm_poly} we have
\[
d(\iota s, y) 
\leq
K_{\alpha, \delta} (|s| + \delta + C_{\delta,|s|}),
\]
and therefore
\[
|u| =
d(\iota s, \tau q) = d(\iota s, y) + d(y, \tau q) 
\leq
K_{\alpha, \delta} (|s| + \delta + C_{\delta,|s|})
+ \delta + |q|. 
\]
Then, again applying the triangle quasi-inequality from Theorem~\ref{thm_poly}, we know that
\[
|r| 
\leq K_{\alpha,\delta}(|u|+|s|) 
\leq K_{\alpha,\delta}(K_{\alpha,\delta}(|s| + \delta + C_{\delta,|s|}) + \delta + |q| + |s|),
\]
which is a contradiction, since $r$ was assumed to satisfy
\[
r > D_{\alpha,\delta,|q|,|s|}. 
\]
We deduce that (ii) $\inball_\delta(y) \cap p \neq \varnothing$, say $z \in \inball_\delta(y) \cap p$. 
Now the lemma follows simply by setting $t$ to be a geodesic path from $z$ to $x$. 
\end{proof}

The main application of the above lemma is to the proof of the following one, which will
be key to establishing the main results concerning $\gd$ and $\gj$.

\begin{lemma}
\label{lemma_DrelnThm}
Let $M$ be a left cancellative monoid which is strongly $\delta$-hyperbolic
with respect to a finite generating set $A$, and has indegree with respect to
$A$ bounded by $\alpha$.

Then for every pair $u,v \in A^*$ of geodesic words, there is a constant $F_{|A|,\alpha,\delta,|u|,|v|}$
(depending on, and polynomial-time computable from, $|A|$, $\alpha$, $\delta$, $|u|$, $|v|$)
such that if $u \gj v$ in $M$
then there exist words 
$a, b \in A^*$ 
such that 
$a u b = v$ and
$|a|,|b| \leq F_{|A|,\alpha,\delta,|u|,|v|}$.

If, in addition, $M$ is cancellative and $puq = v$ in $M$ for some units
$p$ and $q$, then $a$ and $b$ may be chosen to represent units.

Moveover, for a fixed monoid $M$, $F_{|A|,\alpha,|u|,|v|}$ is monotonically increasing as a function
of $|u|$ and $|v|$, and can be bounded above by a linear function of $|u| + |v|$.
\end{lemma}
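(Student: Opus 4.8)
The plan is to translate the hypothesis $u \gj v$ into geometry and then run an induction that carves up a geodesic quadrangle using Lemma~\ref{lem_technicallem}. Since $u \gj v$ means $MuM = MvM$, there are words $a_0,b_0,c_0,d_0 \in A^*$ with $a_0 u b_0 = v$ and $c_0 v d_0 = u$ in $M$. Writing $\Gamma$ for the right Cayley graph and $\overline{x}$ for the element represented by a word $x$, right multiplication by a generator moves along an edge, so $a_0 u b_0$ labels a path $1 \to \overline{a_0} \to \overline{a_0 u} \to \overline{v}$ in $\Gamma$. The first step is a reduction: it suffices to produce words $a,b$ with $\overline{a}\,\overline{u}\,\overline{b} = \overline{v}$ and with $d_\Gamma(1,\overline{a}) \le F$ and $d_\Gamma(\overline{au},\overline{v}) \le F$. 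Indeed, given such $a,b$, replace $a$ by a geodesic word for $\overline{a}$ and $b$ by the label of a geodesic from $\overline{au}$ to $\overline{v}$; by \emph{left} cancellativity the latter label represents the unique element $\overline{b}$ with $\overline{au}\cdot\overline{b}=\overline{v}$, so we still have $aub = v$ in $M$, and now $|a| = d_\Gamma(1,\overline{a})$, $|b| = d_\Gamma(\overline{au},\overline{v})$. It also helps to record the two-sided consequence of $u \gj v$: substituting gives $u = c_0 v d_0 = (c_0 a_0)\,u\,(b_0 d_0)$, so with $\overline{e}=\overline{c_0 a_0}$, $\overline{f}=\overline{b_0 d_0}$ we have $\overline{e}\,\overline{u}\,\overline{f}=\overline{u}$ and hence $\overline{e}^{\,n}\overline{u}\,\overline{f}^{\,n}=\overline{u}$ for all $n$; this information, unavailable from $u \leq_\gj v$ alone, is what keeps the induction below non-circular.

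The core of the argument is the induction. Given a factorisation $\overline{v} = \overline{a}\,\overline{u}\,\overline{b}$ as above, I would consider the geodesic quadrangle $Q = (A,U_1,B,V)$ in $\Gamma$, where $A$ is a geodesic $1 \to \overline{a}$, $U_1$ a geodesic $\overline{a} \to \overline{au}$ (so $|U_1| \le |u|$), $B$ a geodesic $\overline{au} \to \overline{v}$, and $V$ the geodesic $1 \to \overline{v}$ given by the geodesic word $v$ (so $|V| = |v|$). If the side $B$ exceeds the threshold $D_{\alpha,\delta,|U_1|,|V|} \le D_{\alpha,\delta,|u|,|v|}$, Lemma~\ref{lem_technicallem} supplies a geodesic $t$ of length at most $2\delta$ from a point of $A$ to a point $x$ of $B$ with $d_\Gamma(x,\overline{v}) = C_{\delta,|v|}$; cutting along $t$ splits $Q$ into an ``outer'' geodesic quadrangle, whose four sides are all bounded by a linear function of $|v|$ and $\delta$ (apply the polygon quasi-inequality, Theorem~\ref{thm_poly}, and the triangle quasi-inequality, Lemma~\ref{lemma_quasitriangle}), and an ``inner'' geodesic quadrangle $Q'$ that still has $U_1$ as a side, has a base of length at most $2\delta$, and has a strictly shorter long side. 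Crucially, once a base has been reduced to size $O(\delta)$ it remains of bounded size under all further cuts, and the inner quadrangle still encodes a factorisation through $\overline{u}$ of a bounded-distance translate of $\overline{v}$; iterating this, and using $\overline{e}\,\overline{u}\,\overline{f}=\overline{u}$ to control how far the initial vertex of the $A$-side can drift away from $1$ over the course of the iteration, one bounds $d_\Gamma(1,\overline{a})$ and $d_\Gamma(\overline{au},\overline{v})$ simultaneously by a linear function of $|u|+|v|$. The asserted monotonicity of $F$ in $|u|,|v|$, and the linear bound in $|u|+|v|$, then come out of tracking the constants, using the monotonicity and linearity of $C_{\delta,|s|}$ and $D_{\alpha,\delta,|q|,|s|}$ provided by Lemma~\ref{lem_technicallem} and the polynomial-time computability of $K_{\alpha,\delta}$.

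For the units refinement, suppose $M$ is cancellative and $puq = v$ with $p,q$ units, and let $G$ denote the group of units. In a cancellative monoid a right-invertible element is a unit (if $xy'=1$ then $y'xy'=y'$, whence $y'x=1$), so every vertex on a $\Gamma$-geodesic from $1$ to a unit is itself a unit, and similarly the vertices of a $\Gamma$-geodesic from $\overline{x}$ to $\overline{x}\,\overline{q}$ (with $\overline{q}$ a unit) all lie in $\overline{x}G$; moreover left multiplication by $\overline{x}$ is $1$-Lipschitz on $\Gamma$ and restricts to a graph isomorphism of $\Gamma|_G$ onto $\Gamma|_{\overline{x}G}$. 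It follows that $\Gamma|_G$ is the Cayley graph of $G$ with respect to the units among the generators, which by the corollary above (the group of units of a cancellative strongly hyperbolic monoid is a hyperbolic group) is a hyperbolic group. Running the inductive argument of the previous paragraph inside this hyperbolic group — i.e.\ starting from $\overline{v}=\overline{p}\,\overline{u}\,\overline{q}$ and choosing all geodesics that represent unit factors inside $\Gamma|_G$ — produces $a,b$ representing units with the same linear length bound (in the hyperbolic group itself the bound is immediate: take $a$ trivial and $b$ a geodesic word for $\overline{u}^{-1}\overline{v}$, of length $d(\overline{u},\overline{v}) \le |u|+|v|$).

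The hard part is ensuring the core induction genuinely terminates. The polygon quasi-inequality alone bounds each side of a geodesic polygon only in terms of the \emph{other} sides, which is circular; so the reduction of the long side achieved by Lemma~\ref{lem_technicallem} must be fed into a well-founded measure that strictly decreases, while simultaneously keeping $\overline{u}$ anchored in the middle of the factorisation and keeping the auxiliary bases small — and it is precisely at the point of controlling the drift of the anchor that the two-sided identity $\overline{e}\,\overline{u}\,\overline{f}=\overline{u}$, hence the full strength of $u \gj v$, is needed.
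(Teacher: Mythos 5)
Your setup — forming the geodesic quadrangle $(a,u,b,v)$ in the right Cayley graph and using Lemma~\ref{lem_technicallem} to produce short transversal paths $t$ of length at most $2\delta$ from the $a$-side to the $b$-side — matches the paper's, but the combinatorial engine that actually bounds $|b|$ is missing, and the iterative scheme you propose in its place does not close.

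The paper's argument is extremal, not inductive: choose $a,b$ with $aub=v$ and $|b|$ \emph{minimal} (after fixing $a$ geodesic and, where relevant, $a,b$ units). Then mark points along $b$ at intervals of $C_{\delta,\cdot}$, each giving a transversal word $t_k$ of length at most $2\delta$ (this is exactly your repeated application of Lemma~\ref{lem_technicallem}). There are only finitely many words over $A$ of length at most $2\delta$, so if $|b|$ exceeds a threshold, pigeonhole forces $t_i = t_j$ for some $i < j$. Writing $a = a'a''a'''$ and $b = b'b''b'''$ with the obvious splittings at the endpoints of $t_i$ and $t_j$, one has $a'a''a'''ub' = a'a''t_j$ in $M$, and \emph{left cancellativity} gives $a'''ub' = t_j = t_i$. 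Hence $(a'a''')u(b'b''') = a't_ib''' = v$ with $|b'b'''| < |b|$, contradicting minimality. This pigeonhole-plus-cut-and-paste step is the whole point; your proposal has no substitute for it. Your iteration ``cut off an outer quadrangle, recurse on the inner one'' strictly shortens the long side, but since the number of iterations needed is on the order of $|b|$, any bound extracted this way is circular — you acknowledge this, but the remedy you offer (controlling ``drift'' via the two-sided identity $\overline{e}\,\overline{u}\,\overline{f}=\overline{u}$) is not a remedy: the paper's proof uses only $v \leq_\gj u$, never the full two-sided relation, and no amount of bounding the drift of the anchor turns a descent that takes $\Theta(|b|)$ steps into an \emph{a priori} bound on $|b|$.

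Your preliminary reduction (replace $a,b$ by geodesic representatives, using left cancellativity so that the $b$-side is geodesic) is correct and is in the paper. Your treatment of the units refinement also goes astray: the paper simply observes that for cancellative $M$, if $b$ represents a unit then each letter of $b$ does too (the non-units form an ideal), so the cut-and-pasted word $b'b'''$ again represents a unit, preserving minimality subject to the unit constraint. Your alternative — running the argument inside the group of units — founders on the line ``take $a$ trivial and $b$ a geodesic word for $\overline{u}^{-1}\overline{v}$'': $\overline{u}$ and $\overline{v}$ need not be units, so $\overline{u}^{-1}$ need not exist, and the claimed ``immediate'' bound does not make sense.
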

\begin{proof} 
Let $a, b \in A^*$ be such that (i) $a u b = v$, (ii) if $M$ is cancellative
and it is possible to choose them so, $a$ and $b$ represent units, (iii)
$a$ is a geodesic word and (iv) $|b|$ is minimal subject to the preceding
three conditions (and in particular is a geodesic word).

Since $a$, $u$, $b$ and $v$ are all geodesic words, $aub = v$ in $M$ and
$M$ is left cancellative, they label the sides of a geodesic quadrangle in the
right Cayley graph of $M$ with respect to $A$; for brevity we identify the
words $a$, $u$, $b$
and $v$ with the geodesic paths they label in this quadrangle.

The intuition
of the
proof is that we work our way from $\tau v$ to $\tau u$ marking
points at regular intervals on the geodesic labelled by $b$. Each
of these points will be the terminal vertex of a path labelled by a word
$t_k$ from a point of the geodesic labelled by $a$ to that labelled
by $b$, and each of the words $t_k$ will have length bounded by $2\delta$.
If $b$ were excessively long then two of the words, $t_i$ and $t_j$ say,
must coincide. Then using left cancellativity we can perform a cut and paste
operation gluing $t_i$ along $t_j$ and in the process reduce the length of
the word $b$. See Figure~\ref{fig_Drel} for an illustration of the argument.   

In more detail, without loss of generality we may suppose that $\delta$ is
an integer. Let $W_{A,2\delta}$ be the
number of words over $A$ of length less than or equal to $2\delta$, so $W_{A,2\delta}$ is equal to $2\delta + 1$ if $|A|=1$, and is equal to $(|A|^{2\delta+1}-1)/(|A|-1)$ otherwise. 
First we define 
\[
E_{|A|,\alpha,\delta,|u|,|v|} = 
D_{\alpha,\delta,|u|,|v|} +
D_{\alpha,\delta,|u|,2\delta} +
C_{\delta,|v|} +
W_{A,2\delta}C_{\delta,2\delta}.
\]
Notice that if the monoid and generating set are fixed (so that $|A|$,
$\alpha$ and $\delta$ are constant) then $W_{A,2\delta}$ and
$C_{\delta,2\delta}$ are constant,
$D_{\alpha,\delta,|u|,|v|} + D_{\alpha,\delta,|u|,2\delta}$ is bounded above by a linear function in $|u| + |v|$ by Lemma~\ref{lem_technicallem} and since $\delta$ is constant, 
and  $C_{\delta,|v|}$ is bounded above by a linear function in $|u| + |v|$, again by Lemma~\ref{lem_technicallem} and since $\delta$ is constant. Thus, $E_{|A|,\alpha,\delta,|u|,|v|}$ can be bounded above by a linear
function of $|u| + |v|$.

We claim that
$$| b | \leq E_{|A|,\alpha,\delta,|u|,|v|}.$$
Indeed, suppose false for a contradiction.
Decompose $b = b_1 \circ c_1$ where $|c_1| = C_{\delta, |v|}$. Then by considering
the geodesic quadrangle $(a,u,b,v)$ and applying
Lemma~\ref{lem_technicallem}, there is a geodesic path labelled by
a word $t_1$ from a point
of $a$ to $\tau b_1$ with $|t_1| \leq 2 \delta$. Let $a_1$ be the suffix of $a$ leading from the 
start point of this path. 
Now starting from $i = 1$, we repeatedly take the geodesic quadrangle corresponding to
$(a_i, u, b_i, t_i)$, and write $b_i = b_{i+1} \circ c_{i+1}$
where $|c_{i+1}| = C_{\delta,|t_i|}$. So long as $|b_i| > D_{\alpha,\delta,|u|,|t_i|}$
we may use Lemma~\ref{lem_technicallem} to find a geodesic path, labelled by a word $t_{i+1}$ of
length at most $2 \delta$, from a point on $a_i$ to $\tau b_i$. Let $a_{i+1}$ be
the suffix of $a_i$ leading from the start point of this path, and then repeat for the next
value of $i$.

Notice that we can continue this process for at least $N = W_{A,2\delta} + 1$ steps, since 
\begin{align*}
|b| 								&> E_{|A|,\alpha,\delta,|u|,|v|} \geq D_{\alpha,\delta,|u|,|v|},
\end{align*}
which is needed for the first step, 
\begin{align*}
|b|   				&> E_{|A|,\alpha,\delta,|u|,|v|} \geq C_{\delta,|v|} + D_{\alpha,\delta,|u|,2\delta}
\geq C_{\delta,|v|} + D_{\alpha,\delta,|u|,|t_1|}, 
\end{align*}
which allows the second step, and 
\begin{align*}
|b|    		&> E_{|A|,\alpha,\delta,|u|,|v|} \geq 
C_{\delta,|v|} + W_{A,2\delta}C_{\delta,2\delta} + D_{\alpha,\delta,|u|,2\delta} \\
&\geq C_{\delta,|v|} + \sum_{i=1}^{k}C_{\delta,|t_i|} + D_{\alpha,\delta,|u|,|t_{k+1}|},
\end{align*}
for all $k = 1,2, \ldots, W_{A,2\delta}$, 
by the monotonically increasing nature of $C_{\delta,|s|}$ and $D_{\alpha,\delta,|u|,|v|}$, which 
allows us to carry out the the third up to the  $(W_{A,2\delta}+1)$th step. 

The result, after $N$ steps, is depicted in Figure~\ref{fig_Drel}.
At this point we have $N$ words $t_1$, $t_2$, \ldots, $t_N$ all of length less than $2\delta$.
By the choice of $W_{A,2\delta}$ and the pigeon-hole principle, it follows that there exists
an $i<j$ such that $t_i = t_j$. 

This gives rise to a decomposition $a = a' a'' a'''$,
where $a'$ labels the path from $\iota a$ to $\iota t_i$, $a''$
the path from $\iota t_i$ to $\iota t_j$, and $a'''$ the path from
$\iota t_j$ to $\tau a$. Similarly, we write $b = b' b'' b'''$,
where $b'$ labels the path from $\iota b$ to $\tau t_j$, $b''$ the
path from $\tau t_j$ to $\tau t_i$ and $b'''$ the path from $\tau t_i$
to $\tau b$. Note that since $i \neq j$, $|b''| > 0$. Now we have
$a' a'' a''' u b' = a' a'' t_j$ in the monoid,
and hence by left cancellativity, $a''' u b' = t_j$. But now
\[(a' a''') u (b' b''') = a' (a''' u b') b''' = a' t_j b''' = a' t_i b''' = v.\]

But $b''$ is non-empty, so $|b' b'''| < |b|$. Moreover, if $M$ is cancellative
and $b$ represents a unit then $b$, and hence $b'b'''$ will be entirely composed of generators
representing units, so $b'b'''$ will also be a unit. This contradicts the
minimality in the choice of $b$, and establishes the claim that
$| b | \leq E_{|A|,\alpha,\delta,|u|,|v|}$.

Now, if we let $\beta = \max(\alpha, |A|)$ then by
Theorem~\ref{thm_poly} (the polygon inequality) we have
\[
|a| \leq K_{\beta, \delta} (|u| + |v| + |b|).
\] 
so it suffices to set
$$F_{|A|,\alpha,\delta,|u|,|v|}
=
(K_{\beta,\delta}+1)(|u| + |v| + E_{|A|,\alpha,\delta,|u|,|v|} ).$$
The fact that $F_{|A|,\alpha,\delta,|u|,|v|}$ is polynomial-time
computable, monotonically increasing and bounded above by a linear
function follow from the corresponding properties for $E_{|A|,\alpha,\delta,|u|,|v|}$.
\end{proof}

\begin{figure}
\def\x{0.5}
\begin{center}
\begin{tikzpicture}[scale=0.6, rotate=90, decoration={ 
markings,
mark=
at position \x
with 
{ 
\arrow[scale=1.5]{stealth} 
} 
} 
]
\tikzstyle{vertex}=[circle,draw=black, fill=white, inner sep = 0.75mm]
\node (P00)  [vertex,label={180:{}}] at (0,0) {};
\node (P50)  [vertex,label={180:{}}] at (5,0) {};
\node (P020)  [vertex,label={180:{}}] at (0,20) {};
\node (P520)  [vertex,label={180:{}}] at (5,20) {};
\draw [postaction={decorate}] (P00)--(P50) node[pos=0.5,right] {$v$};
\draw [postaction={decorate}] (P00)--(P020) node[pos=0.5,below] {$a$ \; \; \; \; };
\draw [postaction={decorate}] (P020)--(P520) node[pos=0.5,left] {$u$};
\draw [postaction={decorate}] (P520)--(P50) node[pos=0.5,above] { \; \; \; \; $b$};
\draw [postaction={decorate}] (0,2)--(5,2) node[pos=0.5,left] {$t_1$};
\draw [postaction={decorate}] (0,4)--(5,4) node[pos=0.5,left] {$t_2$};
\draw [postaction={decorate}] (0,6)--(5,6) node[pos=0.5,left] {$t_3$};
\draw [postaction={decorate}] (0,8)--(5,8) node[pos=0.5,left] {$t_4$};
\draw [postaction={decorate}] (0,10)--(5,10) node[pos=0.5,left] {$t_5$};
\draw [postaction={decorate}] (0,18)--(5,18) node[pos=0.5,left] {$t_{N}$};
\draw [postaction={decorate}] (0,16)--(5,16) node[pos=0.5,left] {$t_{N-1}$};
\draw [dotted] (2.5,12)--(2.5,14);
\end{tikzpicture}
\end{center}
\caption{
Proof of Lemma~\ref{lemma_DrelnThm}.
}
\label{fig_Drel}
\end{figure}

For our main result concerning the $\gd$ relation, we shall need the
following elementary fact about cancellative monoids, a proof of which
we include for completeness.

\begin{lemma}
\label{lem_CancMonoid}
Let $M$ be a cancellative monoid and let $a,b \in M$. Then $a \gd b$ if
and only if there are units $q,r \in U(M)$ satisfying $qar=b$. 
\end{lemma}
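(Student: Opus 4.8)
The plan is to prove the two implications separately, the backward direction being immediate and the forward direction requiring the cancellativity hypothesis in an essential way. For the backward direction: if $qar = b$ with $q, r \in U(M)$, then $b = (qa)r$ gives $b \leq_{\gr} qa$, and conversely $qa = br^{-1}$ gives $qa \leq_{\gr} b$, so $b \gr qa$. Similarly $qa = q(ar)$ and $a = q^{-1}(qar)\cdot$(trivially) — more precisely $qa \gl a$ follows from $qa = q \cdot a$ and $a = q^{-1} \cdot (qa)$. Thus $a \gl qa \gr b$, witnessing $a \gd b$ with $c = qa$.

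For the forward direction, suppose $a \gd b$, so there is $c \in M$ with $a \gl c$ and $c \gr b$. From $a \gl c$ we get $Ma = Mc$, so $c = xa$ and $a = yc$ for some $x, y \in M$; hence $a = yxa$, and left cancellativity forces $yx = 1$. By a symmetric argument from $c \gr b$ we obtain $s, t \in M$ with $b = cs$, $c = bt$, and right cancellativity gives $st = 1$. The key step is to upgrade the one-sided inverse relations $yx = 1$ and $st = 1$ to genuine two-sided invertibility. The standard fact here is that in a \emph{cancellative} monoid, a one-sided unit is a two-sided unit: from $yx = 1$ we get $x(yx) = x$, i.e. $(xy)x = x = 1 \cdot x$, so right cancellativity yields $xy = 1$; hence $x, y \in U(M)$. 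Dually $s, t \in U(M)$.

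With $x$ and $s$ now known to be units, we set $q = x$ and $r = s$ and verify $qar = b$: indeed $xa = c$ and $cs = b$, so $xas = b$, i.e. $qar = b$ with $q, r \in U(M)$, as required. I expect the only subtlety — and the single place the full cancellativity (as opposed to mere left cancellativity) is genuinely needed — is precisely the passage from one-sided to two-sided inverses; everything else is a routine unwinding of the definitions of $\gl$, $\gr$ and $\gd$. One should take a little care to state the one-sided-to-two-sided lemma cleanly, or simply inline its one-line proof at the two points where it is used.
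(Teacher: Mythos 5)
Your proposal is correct and follows essentially the same route as the paper: unwind the $\gd$-relation to obtain $c$ with $a \gl c \gr b$, show the middle factors are units by cancellation, and observe the converse is immediate. Two small remarks. First, you have the sidedness of cancellation swapped in two places: $1\cdot a = (yx)a$ is cancelled on the \emph{right}, and $c\cdot 1 = c(st)$ is cancelled on the \emph{left}, so the phrases ``left cancellativity forces $yx=1$'' and ``right cancellativity gives $st=1$'' should be interchanged; this is harmless here since $M$ is fully cancellative, but worth fixing. Second, your detour through the ``one-sided units are two-sided units in a cancellative monoid'' fact is avoidable: the paper obtains $qp=1$ and $pq=1$ \emph{directly} by substituting each of the two equations $pc=a$, $qa=c$ into the other and cancelling, which is marginally shorter than deriving one equation and then invoking the auxiliary lemma.
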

\begin{proof}
If $a \gd b$ then by definition there exists $c \in M$ such that $a \gl c$
and $c \gr b$. The former means there are $p,q \in M$ with $pc = a$
and $qa = c$, so $qpc = c = 1.c$ and $pqa = a = 1.a$, which by cancellativity
implies
$pq = qp = 1$, so $q$ is a unit. A dual argument gives $b = cr$
for some unit $r$, and now $b = cr = qar$ as required. The converse is
immediate.
\end{proof}

We are now ready to prove our main theorem about $\gd$ and $\gj$.

\begin{theorem}
\label{thm_JisDecidable}
Let $M$ be a finitely generated, left cancellative monoid of bounded
indegree which is
strongly hyperbolic. Then the $\mathcal{J}$-order is 
in $\mathcal{NP}$.

If, moreover, $M$ is cancellative, then the
$\mathcal{D}$-relation is in $\mathcal{NP}$.
\end{theorem}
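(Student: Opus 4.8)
The plan is to reduce each of the two decision problems to a guess-and-check ($\mathcal{NP}$) procedure, in which a certificate consists of the short ``outer factors'' supplied by Lemma~\ref{lemma_DrelnThm} together with a short rewriting sequence supplied by the polynomial Dehn function of Theorem~\ref{thm_DehnFnBound}. I would fix once and for all a finite generating set $A$ with respect to which $M$ is strongly $\delta$-hyperbolic, together with a bound $\alpha$ on its indegree; then the right Cayley graph of $M$ over $A$ is locally finite with indegree and outdegree bounded by $\beta = \max(|A|,\alpha)$, so Lemma~\ref{lem_technicallem}, Lemma~\ref{lemma_DrelnThm} and Theorem~\ref{thm_poly} are all available.

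For the $\mathcal{J}$-order, note that $w_1 \leq_{\gr} w_2$ is false here; rather, $w_1 \leq_{\gj} w_2$ holds exactly when $a w_2 b = w_1$ in $M$ for some $a,b \in A^*$. The content I would extract from Lemma~\ref{lemma_DrelnThm} — or rather from its proof, which uses only the existence of words $a,b$ with $aub = v$ and hence goes through whenever $v \leq_{\gj} u$, not merely when $u \gj v$ — is that, taking geodesic words $u,v$ for $w_2,w_1$, one may choose such $a,b$ with $|a|,|b| \le F_{|A|,\alpha,\delta,|u|,|v|}$; by the monotonicity and linear estimate in that lemma together with $|u|\le|w_2|$ and $|v|\le|w_1|$, this is at most $N(|w_1|+|w_2|)$ for a fixed linear function $N$. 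So the algorithm guesses $a,b\in A^*$ of length at most $N(|w_1|+|w_2|)$ and a rewriting sequence of polynomially bounded length transforming $a w_2 b$ into $w_1$, and verifies everything deterministically; an accepting run exists precisely when $w_1 \leq_{\gj} w_2$, and the whole certificate has polynomial size.

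For the $\mathcal{D}$-relation, under the additional hypothesis that $M$ is cancellative, I would begin from Lemma~\ref{lem_CancMonoid}: $w_1 \gd w_2$ if and only if $q w_1 r = w_2$ in $M$ for some units $q,r$. Since $\gd \subseteq \gj$, if $w_1 \gd w_2$ then $u \gj v$ for geodesic words $u,v$ of $w_1,w_2$, and since $M$ is cancellative with $puq = v$ for units $p,q$, the ``moreover'' clause of Lemma~\ref{lemma_DrelnThm} supplies $a,b\in A^*$ \emph{representing units} with $a w_1 b = w_2$ and $|a|,|b|\le N(|w_1|+|w_2|)$. The algorithm guesses such $a,b$; to certify that $a$ represents a unit it also guesses $\bar a,\hat a\in A^*$ with $a\bar a = 1$ and $\hat a a = 1$ in $M$, which exist with $|\bar a|,|\hat a|\le K|a|$ (where $K$ is the polygon constant of Theorem~\ref{thm_poly}) because a unit $a$ satisfies $1\le_{\gr}a$ and $1\le_{\gl}a$, so Theorem~\ref{thm_GreensRRelation} applies, while conversely $a\bar a = \hat a a = 1$ forces $\hat a = \bar a$ to be a two-sided inverse of $a$; similarly for $b$. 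Finally it guesses polynomially bounded rewriting sequences witnessing $a\bar a = 1$, $\hat a a = 1$, $b\bar b = 1$, $\hat b b = 1$ and $a w_1 b = w_2$, and verifies deterministically.

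The geometry is entirely in the preceding lemmas, so what remains is bookkeeping; the two places I would watch are that the proof of Lemma~\ref{lemma_DrelnThm} genuinely delivers the one-directional statement required for the $\mathcal{J}$-\emph{order} (as opposed to the $\gj$-relation), and that the various guessed data — the Green's factors $a,b$, the unit inverses $\bar a,\hat a,\bar b,\hat b$, and the Dehn-function derivations — are simultaneously of polynomial size. The latter is the real substance of the $\mathcal{NP}$ claim and is exactly where the polynomial Dehn function of Theorem~\ref{thm_DehnFnBound} is indispensable: without it the word equalities occurring in the certificates could not themselves be verified within $\mathcal{NP}$.
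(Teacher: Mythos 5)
Your proposal is correct and follows essentially the same route as the paper: apply Lemma~\ref{lemma_DrelnThm} to bound the outer factors $a,b$ by a polynomial in the input lengths, then reduce to the word problem, which is in $\mathcal{NP}$ by Theorem~\ref{thm_polytimebound}; you also correctly note that the proof of Lemma~\ref{lemma_DrelnThm} only requires $v \leq_{\gj} u$, which is exactly how the paper uses it. The one place you diverge is in certifying unit-ness for the $\gd$-case: you guess two-sided inverses $\bar a, \hat a, \bar b, \hat b$ and bound them via Theorem~\ref{thm_GreensRRelation}, whereas the paper observes more directly that in a cancellative monoid the set of words representing units is exactly $B^*$ for $B$ the set of generators that are units (since the non-units form an ideal), so one can simply restrict the guessed letters of $a,b$ to $B$ and check membership deterministically — slightly cleaner, but your version is also valid.
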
 
\begin{proof}
Let $A$ be a finite generating set with respect to which $M$ is
strongly $\delta$-hyperbolic, and $\alpha$ a corresponding bound on
the indegree. Suppose $u, v \in A^*$ are such that $v \leq_\gj u$ in $M$. Let $u'$ and
$v'$ be geodesic words representing the same elements as $u$ and $v$ respectively.
Then applying Lemma~\ref{lemma_DrelnThm} we have $a u' b = v'$ in $M$ for some
elements $a, b \in A^*$ with
$$|a|,|b| \ \leq \ F_{|A|,\alpha,\delta,|u'|,|v'|} \  \leq \ F_{|A|,\alpha,\delta,|u|,|v|},$$
where the last inequality follows from the monotonicity of
$F_{|A|,\alpha,\delta,|u|,|v|}$ as a function of $|u|+|v|$. Since $u = u'$
and $v=v'$ in $M$, it follows also that $aub = v$ in $M$.

Thus, given words $u$ and $v$, to check non-deterministically if $v \leq_\gj u$
it suffices compute the constant $F = F_{|A|,\alpha,\delta,|u|,|v|}$, guess
words $a$ and $b$ of length no more than $F$, and test if $aub = v$ in $M$.

By Lemma~\ref{lemma_DrelnThm}, the computation of $F$ can be performed in
polynomial time, and $F$ itself is bounded by a polynomial function of
$|u|+|v|$. Thus, the words $a$ and $b$ to be guessed have polynomial length.
Finally, the test of whether $aub = v$ in $M$ can be performed in
non-deterministic polynomial time by Theorem~\ref{thm_polytimebound}.
Thus, the whole procedure is possible in (non-deterministic) polynomial
time, so the $\mathcal{J}$-order is in $\mathcal{NP}$.

For the case of $\gd$ in a cancellative monoid, let $B$ be the set of
generators in $A$ which represent units in $M$. It is easily seen that
$B^*$ is exactly the set of words in $A^*$ representing units in $M$.
Thus, by Lemmas~\ref{lemma_DrelnThm} and \ref{lem_CancMonoid}, two words $u$
and $v$ represent $\gd$-related elements
if and only if there are words $a, b \in B^*$ with
$|a|,|b| \leq F$ and $a u b = v$ in $M$. So to check if $u \gd v$ in $M$,
we can use exactly the 
same procedure as above but considering only those $a$ and $b$ in $B^*$.
\end{proof}

We note that the second part of Theorem~\ref{thm_JisDecidable} is not
a trivial consequence of the first, since there are finitely presented cancellative monoids for which the relations $\mathcal{D}$ and $\mathcal{J}$ do not coincide. For instance consider the monoid $M$ defined by the presentation 
\[
\langle \; x,y,a,b \; | \; axb=y, \ ayb=x \; \rangle. 
\]
It follows from results of Adjan \cite{Adyan60}, since the relations have neither left cycles or right cycles, 
that $M$ is a cancellative monoid (in fact, it is group embeddable). 
It is a straightforward exercise to verify that in this monoid the elements represented by $x$ and $y$ are $\mathcal{J}$-related, but they are not $\mathcal{D}$-related (since the group of units is trivial, and the $\mathcal{R}$- and $\mathcal{L}$-relations are trivial).

\section{Monoids which are Not Left Cancellative}\label{sec_noncanc}

The definition of strong $\delta$-hyperbolicity makes sense for arbitrary
directed graphs, and hence for arbitrary monoids, but most of our results
so far have required a left cancellativity assumption on the monoid.
One might reasonably ask what can be deduced about a more general finitely
generated monoid, given only the information that it is strongly $\delta$-hyperbolic.
In this section we present a class of finitely
generated monoids which are right cancellative but not left cancellative and
which, although strongly $0$-hyperbolic, need not even be recursively presentable.
Indeed, the Cayley graph of each monoid is in most respects very like a
tree, being a rooted directed tree with the addition of some
duplicate edges, 
and so is likely to satisfy any reasonable geometric
definition of hyperbolicity. 
We believe this provides very strong evidence that geometric hyperbolicity
conditions cannot provide the same kind of information about general monoids
that they do about groups, and perhaps about other restricted classes of
monoids.

Given $w = a_1a_2\cdots a_r$ and $k \in \{1,\ldots,r\}$ write $w[k] = a_1 \cdots a_k$. 
For each subset $I$ of $\mathbb{N}$ define
\[
M_I = \langle \; a,b,c,d \; | \; ab^ic=ab^id \ (i \in I) \; \rangle. 
\]
\begin{lemma}
\label{lem_basic}
Let $w,u \in \lbrace a,b,c,d \rbrace^*$. If $w=u$ in $M_I$ then $|w|=|u|$ and for all $k \in \{1,\ldots,|w|\}$ we have 
$w[k] = u[k]$ in $M_I$. 
\end{lemma}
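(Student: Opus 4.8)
The plan is to exploit the very special shape of the defining relations: each relation $ab^ic = ab^id$ (for $i \in I$) has two sides of the \emph{same} length $i+2$, and these two words agree letter-for-letter in every position except the last. Writing $r_i = ab^ic$ and $s_i = ab^id$, this means that for every $j \in \{1,\ldots,i+2\}$ the length-$j$ prefixes $r_i[j]$ and $s_i[j]$ are literally equal as words when $j < i+2$, and are equal in $M_I$ (by the defining relation itself) when $j = i+2$; so in all cases $r_i[j] = s_i[j]$ in $M_I$.

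First I would record that $w = u$ in $M_I$ means there is a finite chain $w = w_0, w_1, \ldots, w_n = u$ in which each $w_{t+1}$ is obtained from $w_t$ by a single elementary rewrite, i.e.\ $w_t = p\,\ell\,q$ and $w_{t+1} = p\,\ell'\,q$ for some words $p,q$ and some relation with $\{\ell,\ell'\} = \{r_i, s_i\}$, $i \in I$. Since $|r_i| = |s_i| = i+2$ for every $i \in I$, each such step preserves length, so induction on $t$ gives $|w_t| = |w|$ for all $t$; in particular $|w| = |u|$, which is the first assertion.

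For the second assertion I would fix $k \in \{1,\ldots,|w|\}$ and prove, by induction on $t$, that $w_0[k] = w_t[k]$ in $M_I$; the case $t = n$ is then the claim. For the inductive step it suffices to show $w_t[k] = w_{t+1}[k]$ in $M_I$ for a single rewrite, say with $w_t = p\,r_i\,q$ and $w_{t+1} = p\,s_i\,q$ (the opposite direction being symmetric), and I would split into three cases according to the position of $k$ relative to the rewritten factor. If $k \le |p|$, then $w_t[k] = w_{t+1}[k] = p[k]$ as words. If $|p| < k \le |p| + i + 2$, then $w_t[k] = p\,r_i[k-|p|]$ and $w_{t+1}[k] = p\,s_i[k-|p|]$, which are equal in $M_I$ by the prefix observation above (and because congruence is a congruence, so left-multiplication by $p$ is well defined). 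If $k > |p| + i + 2$, then, using $|r_i| = |s_i| = i+2$, the common suffix occupies the same positions, so $w_t[k] = p\,r_i\,q[k - |p| - i - 2]$ while $w_{t+1}[k] = p\,s_i\,q[k - |p| - i - 2]$, and these are equal in $M_I$ because $r_i = s_i$ in $M_I$. Chaining these equalities over $t$ gives $w[k] = u[k]$ in $M_I$ for every $k$.

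The argument is essentially bookkeeping, so I do not expect a serious obstacle; the only points requiring care are the boundary case $k = |p| + i + 2$, where the two prefixes genuinely differ as words and one must invoke the defining relation rather than rely on literal equality, and the systematic use of the length equality $|r_i| = |s_i|$, which is exactly what keeps the suffix $q$ in the same positions on both sides and hence makes the three-case split uniform. One should also keep in mind that relations may be applied in either direction, which is handled by the obvious left--right symmetry of the two cases.
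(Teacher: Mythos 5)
Your proof is correct and takes essentially the same approach as the paper: induction on the number of elementary rewrites, reducing to a single application of a relation $ab^ic \leftrightarrow ab^id$. The paper dispenses with the single-rewrite case by remarking that ``the result clearly holds''; your three-way case split on the position of $k$ relative to the rewritten factor (before it, inside it, or after it), together with the observation that the two sides of each relation have the same length and agree as words in every position except the last, is exactly the bookkeeping that justifies that remark.
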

\begin{proof}
This can be shown by a straightforward induction on the number of applications
of relations required to transform $w$ into $u$. It suffices just to consider
the case where $w$ and $u$ are separated just by the application of a single
relation, so $w = \alpha ab^i c \beta$, $u = \alpha a b^i d \beta$.
But in this case the result clearly holds. 
\end{proof}

\begin{corollary}\label{cor_rightcanexample}
$M_I$ is right cancellative and $\gj$-trivial.
\end{corollary}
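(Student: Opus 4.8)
The plan is to derive both assertions directly from Lemma~\ref{lem_basic}, with essentially no extra work. The first thing I would record is that every defining relator of $M_I$ is length-preserving — both $ab^ic$ and $ab^id$ have length $i+2$ — so word length induces a well-defined monoid homomorphism $|\cdot|\colon M_I\to(\mathbb{N},+)$; this is also immediate from the first clause of Lemma~\ref{lem_basic}. A consequence I will use is that the identity is the only element of $M_I$ of length $0$, since any word equal to the empty word must, by that clause, itself be empty.

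For right cancellativity I would argue as follows. Take words $w,u,x$ over $\{a,b,c,d\}$ with $wx=ux$ in $M_I$. Lemma~\ref{lem_basic} first gives $|wx|=|ux|$, hence $|w|=|u|=:k$; then its second clause, applied to $wx=ux$ with this value of $k$, yields $(wx)[k]=(ux)[k]$ in $M_I$. The point is simply that $(wx)[k]$ is literally the word $w$ (the length-$|w|$ prefix of the concatenation $wx$) and $(ux)[k]$ is literally the word $u$, so $w=u$ in $M_I$. Since every element of $M_I$ has a word representative, $M_I$ is right cancellative.

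For $\gj$-triviality I would unwind the definition: if $s\,\gj\,t$ in $M_I$ then $s\in M_I\,t\,M_I$ and $t\in M_I\,s\,M_I$, so $s=ptq$ and $t=p'sq'$ for some $p,q,p',q'\in M_I$. Applying the length homomorphism gives $|s|=|p|+|t|+|q|$ and $|t|=|p'|+|s|+|q'|$; summing these and cancelling $|s|+|t|$ forces all four of $|p|,|q|,|p'|,|q'|$ to be zero, hence $p=q=p'=q'=1$ and $s=t$. So $\gj$ is the identity relation, i.e.\ $M_I$ is $\gj$-trivial.

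I do not expect any real obstacle: the substantive content — that the relations preserve lengths and all prefixes — is precisely Lemma~\ref{lem_basic}. The only points needing a touch of care are keeping the distinction between a word and the element of $M_I$ it represents when writing $w[k]$, and remembering that in a monoid $s\le_{\gj}t$ permits the trivial factorisation $p=q=1$, which is exactly what makes the length count collapse all the way to $s=t$ rather than merely to $|s|=|t|$.
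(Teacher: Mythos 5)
Your proof is correct and takes essentially the same route as the paper: right cancellativity via the prefix clause of Lemma~\ref{lem_basic} applied at $k=|w|$, and $\gj$-triviality via the length-counting forced by the length-preserving relations (the paper substitutes to get $|u|=|rpuqs|$ where you add and cancel, but the argument is the same). The small points you flag — words versus elements, and that $|p|=0$ forces $p=1$ — are the right ones to check, and both are settled by Lemma~\ref{lem_basic}.
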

\begin{proof}
Suppose $x, y, z \in M_I$ are such that $xz = yz$. Choose words $u, v, w$
respectively to represent them, so that $uw = vw$ in $M_I$. Then by
Lemma~\ref{lem_basic}, $|uw| = |vw|$, so $|u| = |v|$. Now taking 
$k = |u| = |v|$ and using Lemma~\ref{lem_basic} again we have that
$u = v$ in $M_I$, so $x = y$.

If $u \gj v$ in $M_I$ then we have $puq = v$ and $rvs = u$ in $M_I$ for some words
$p$, $q$, $r$ and $s$. But now by Lemma~\ref{lem_basic} again,
$|u| = |rvs| = |rpuqs|$, which means $p$,$q$,$r$ and $s$ are all the
empty word and $u = v$ in $M_I$.
\end{proof}

\begin{proposition}
\label{prop_0hyperExample}
For every subset $I$ of $\mathbb{N}$, $M_I$ is a finitely generated strongly $0$-hyperbolic monoid.  
\end{proposition}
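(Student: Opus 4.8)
The plan is to show that the right Cayley graph $\Gamma_I$ of $M_I$ (with respect to the generating set $\{a,b,c,d\}$) has underlying undirected graph a tree, and then invoke Proposition~\ref{prop_underlyingtree}. The key point is that Lemma~\ref{lem_basic} already tells us that the elements of $M_I$ are in bijection with a certain language of words, and in fact that the ``length-$k$ prefix'' is a well-defined function on $M_I$: if $w = u$ in $M_I$ then $|w| = |u|$ and $w[k] = u[k]$ in $M_I$ for all $k$. This says precisely that every element $m$ of $M_I$ of length $n$ has a unique chain of ``ancestors'' $m[0] = 1, m[1], \dots, m[n-1], m[n] = m$, where each $m[k]$ has length $k$ and $m[k]$ is obtained from $m[k-1]$ by right multiplication by a single generator.

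First I would make this precise: define the \emph{length} of an element $m \in M_I$ to be the common length $|w|$ of any word $w$ representing it (well-defined by Lemma~\ref{lem_basic}), and observe that every edge of $\Gamma_I$ goes from an element of length $k$ to an element of length $k+1$ (since right multiplication by a generator increases word length by one). In particular $\Gamma_I$ has no directed cycles, and more importantly every vertex $m$ of length $n \geq 1$ has a well-defined \emph{parent} $m[n-1]$, and the only edges into $m$ come from $m[n-1]$ (possibly several parallel edges: from $ab^ic$ there are two edges labelled $c$ and $d$ into the same vertex when $i \in I$, but they share the same source). Thus in the underlying undirected graph $Y_I$, each vertex of length $n \geq 1$ has exactly one neighbour of smaller length, namely its parent, and all its other neighbours have length $n+1$.

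Next I would argue that $Y_I$ is connected and acyclic. Connectedness is clear since every vertex $m$ of length $n$ is joined to $1$ by the path $1, m[1], \dots, m[n]$. For acyclicity: any cycle in $Y_I$, being finite, would contain a vertex $m$ of maximal length $n$ on the cycle; both edges of the cycle at $m$ would then have to go to vertices of length $\leq n$, hence (by the previous paragraph) both to the parent $m[n-1]$ of $m$. But two edges of a cycle at a vertex must lead to two \emph{distinct} vertices of the cycle, contradiction — unless $n = 0$, in which case $m = 1$ has no neighbour of smaller length at all, and again the maximality forces the cycle to be degenerate. Hence $Y_I$ is a tree. (One should be slightly careful about whether a ``cycle'' is allowed to repeat an edge or be a multi-edge bigon; but parallel edges in $\Gamma_I$ collapse to a single undirected edge in $Y_I$ by the convention fixed in Section~\ref{sec_semimetric}, so no bigons arise, and the argument is clean.)

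Finally, having shown $Y_I$ is a tree, Proposition~\ref{prop_underlyingtree} immediately gives that $\Gamma_I$ is strongly $0$-hyperbolic, hence $M_I$ is a strongly $0$-hyperbolic monoid, which is the claim. The only mild subtlety — and the step most deserving of care rather than being a genuine obstacle — is the bookkeeping around parallel edges and the precise definition of ``cycle'' in a multigraph; once one adopts the paper's convention that the underlying undirected graph has a single edge per adjacent pair, the tree structure is an essentially immediate consequence of Lemma~\ref{lem_basic}.
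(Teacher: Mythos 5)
Your proof is correct, but it takes a genuinely different route from the paper's. The paper works directly with an arbitrary geodesic triangle $(p,q,r)$: it fixes a geodesic $s$ from $1$ to $\iota p$, observes that $w_s w_p w_q$ and $w_s w_r$ are words for the same element, applies Lemma~\ref{lem_basic} to conclude that the corresponding prefix chains agree, and deduces that $p\circ q$ and $r$ visit exactly the same vertex set, which immediately gives $0$-thinness. You instead establish the stronger structural fact that the underlying undirected graph of the Cayley graph is a tree, and then invoke Proposition~\ref{prop_underlyingtree}. Both arguments turn on the same use of Lemma~\ref{lem_basic}. Yours is arguably cleaner conceptually, since it makes precise the informal remark at the start of Section~\ref{sec_noncanc} that the Cayley graph is ``a rooted directed tree with the addition of some duplicate edges'' and reuses existing machinery; the paper's version is shorter, avoiding the need to introduce the length function, the parent map, and the acyclicity argument. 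Your handling of the multi-edge subtlety is correct given the paper's convention that parallel directed edges collapse to a single undirected edge in the underlying undirected graph. One small slip: the parallel edges labelled $c$ and $d$ emanate from $ab^i$ (not ``from $ab^ic$'') and both terminate at $ab^ic = ab^id$ when $i \in I$; the intended meaning is clear from the rest of the sentence, so this does not affect the argument.
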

\begin{proof}
Let $\Gamma$ be the right Cayley graph of $M_I$ with respect to $A=\{a,b,c,d\}$, let $(p,q,r)$ be a geodesic directed triangle in $\Gamma$, and let $s$ be a geodesic path in $\Gamma$ from $1_M$ to $\iota p$. Suppose that $w_s$, $w_p$, $w_q$ and $w_r$ are the words labelling the paths $s$, $p$, $q$ and $r$, respectively. By Lemma~\ref{lem_basic} we have $w_s w_p w_q [k] = w_s w_r [k]$ for all $k \in \{1,\ldots, |w_s w_r|\}$. From this it follows that the set of vertices visited by the path $p \circ q$ is equal to the set of vertices visited by the path $r$. It is then immediate that the geodesic triangle $(p,q,r)$ is strongly $0$-hyperbolic. 
\end{proof}

\begin{corollary}
There exists a finitely generated, right cancellative, $\gj$-trivial
strongly $0$-hyperbolic monoid
$M$ which is not recursively presentable (and hence has word problem
and all of Green's relations undecidable).
\end{corollary}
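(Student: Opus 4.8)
The plan is to take $M = M_I$ for a suitably chosen subset $I \subseteq \mathbb{N}$ and to read off everything except non-recursive-presentability from results already established for this family. By Corollary~\ref{cor_rightcanexample} the monoid $M_I$ is finitely generated, right cancellative and $\gj$-trivial, and by Proposition~\ref{prop_0hyperExample} it is strongly $0$-hyperbolic, for \emph{every} $I$. So the only thing left to arrange is a choice of $I$ for which $M_I$ fails to be recursively presentable; the parenthetical statements about undecidability then follow formally.

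First I would pin down how $I$ is encoded in $M_I$ by proving that, for $i \in \mathbb{N}$,
\[
ab^ic = ab^id \text{ in } M_I \iff i \in I.
\]
The implication ``$\Leftarrow$'' is immediate from the defining relations. For ``$\Rightarrow$'' I would observe that a defining relation $ab^jc = ab^jd$ (with $j \in I$), read in either direction, can be applied to a word $w$ only if $w$ contains a factor of the form $ab^jc$ or $ab^jd$ with $j \in I$; but in the word $ab^ic$ the letter $a$ occurs only in the first position, the letter $c$ only in the last, and $d$ does not occur at all, so the only possible such factor is the whole word $ab^ic$, which requires $i \in I$. Hence if $i \notin I$ no defining relation applies to $ab^ic$ in either direction, so the $\sim$-class of $ab^ic$ is the singleton $\{ab^ic\}$ and in particular $ab^ic \neq ab^id$ in $M_I$. (This can also be extracted from Lemma~\ref{lem_basic}: any word equal to $ab^ic$ has length $i+2$ and the same proper prefixes, hence is one of $ab^ia$, $ab^ib$, $ab^ic$, $ab^id$, and the same remark about factors rules out all but what the displayed equivalence allows.)

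Next I would recall the elementary fact that a finitely generated monoid is recursively presentable if and only if its word problem is recursively enumerable: from a finite alphabet and a recursively enumerable set of defining relations one can effectively enumerate all pairs of equal words, and conversely the word problem itself is a recursively enumerable set of defining relations. Now I would take $I$ to be a subset of $\mathbb{N}$ that is \emph{not} recursively enumerable --- for instance the complement of the halting set, or simply any non-r.e.\ set, of which there are many, there being only countably many recursively enumerable sets. If $M_I$ were recursively presentable, its word problem would be recursively enumerable, and then by the displayed equivalence the set $I = \{\, i \in \mathbb{N} \mid ab^ic = ab^id \text{ in } M_I \,\}$ would be recursively enumerable, contradicting the choice of $I$. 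Hence $M_I$ is not recursively presentable; in particular its word problem is not recursively enumerable, hence undecidable, and since $M_I$ is $\gj$-trivial each of Green's relations $\gl$, $\gr$, $\gh$, $\gd$, $\gj$ coincides with equality of elements, so deciding any of them is exactly the (undecidable) word problem of $M_I$.

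The only places where care is genuinely needed are the combinatorial equivalence $ab^ic = ab^id \iff i \in I$ --- i.e.\ checking that the presentation introduces no ``accidental'' identifications --- and the correct formulation of recursive presentability together with its equivalence to having a recursively enumerable word problem for finitely generated monoids. Both are routine; everything else is a direct appeal to Corollary~\ref{cor_rightcanexample} and Proposition~\ref{prop_0hyperExample}.
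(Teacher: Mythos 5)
Your proposal is correct and takes essentially the same approach as the paper: choose $I$ not recursively enumerable, apply Corollary~\ref{cor_rightcanexample} and Proposition~\ref{prop_0hyperExample} for the structural properties, and argue that recursive presentability of $M_I$ would make $I$ recursively enumerable via the equivalence $ab^ic = ab^id \iff i \in I$. You fill in the combinatorial details of that equivalence (which the paper dismisses as ``easily seen'') and make explicit the standard fact that a finitely generated monoid is recursively presentable iff its word problem is r.e., but the argument is the same one.
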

\begin{proof}
Let $I$ be a subset of the natural numbers which is not recursively enumerable,
and set $M=M_I$. Then $M$ is finitely generated by definition, right
cancellative and $\gj$-trivial by Corollary~\ref{cor_rightcanexample} and strongly $0$-hyperbolic
by Proposition~\ref{prop_0hyperExample}. Suppose for a contradiction that
$M$ were recursively presentable. Then by enumerating a presentation and its
consequences, we could enumerate all relations which hold in $M$. In
particular we could enumerate those relations of the form $ab^ic = ab^id$
which hold in $M$. But it is easily seen that such a relation holds if and
only if $i \in I$, so this would allow us to enumerate $I$, contradicting
the assumption that $I$ is not recursively enumerable.

Since $M$ is not recursively presentable, it does not have solvable
word problem. And since $M$ is $\gj$-trivial, a decision process
for any of Green's equivalence or pre-order relations would permit
the solution of the word problem.
\end{proof}

\bibliographystyle{plain}
\bibliography{mark}

\def\cprime{$'$} \def\cprime{$'$}

\end{document}